\documentclass{article} 
\usepackage{iclr2025_conference,times}

\usepackage[colorlinks=true,breaklinks=true,bookmarks=true,urlcolor=blue,citecolor=blue,linkcolor=blue,bookmarksopen=false,draft=false]{hyperref}
\usepackage{graphicx}	
\usepackage[lofdepth,lotdepth]{subfig}
\usepackage{amsmath,mathtools,amsthm}	
\usepackage{amssymb,dsfont,bbm}	
\usepackage[ruled,vlined]{algorithm}  
\usepackage{xcolor}  
\usepackage{comment}
\usepackage[textwidth=3cm,textsize=footnotesize]{todonotes}
\usepackage{bm}

\usepackage{xargs}
\usepackage{fancyhdr}
\usepackage{natbib}

\usepackage{setspace}
\usepackage{lastpage}
\usepackage{upgreek}

\usepackage{mathtools}	
\usepackage{enumitem}
\usepackage{mathrsfs}
\usepackage{aliascnt}
\usepackage{cleveref}

\newcommand{\noise}[1]{{\varepsilon_{#1}}}

\newcommandx{\norm}[2][2=]{\Vert#1 \Vert_{{#2}}}
\def\thetas{\theta^{\star}}
\def\noisecov{\Sigma_\varepsilon^{\star}}
\def\trace{\operatorname{Tr}}
\def\prtheta{\bar{\theta}}

\def\Rem{\mathcal{R}}

\newcommand{\ConstRR}[1]{\mathsf{C}_{\operatorname{RR},#1}}
\newcommand{\ConstPR}[1]{\mathsf{C}_{#1}}
\newcommand{\Constlast}[1]{\mathsf{D}_{\operatorname{last},#1}}
\newcommand{\Conststep}[1]{\mathsf{C}_{\operatorname{step},#1}}

\newcommand{\barpsi}{\bar{\psi}}
\newcommand{\MKQ}{\mathrm{Q}}
\newcommand{\PP}{\mathbb{P}}

\def\Id{\mathrm{I}}
\def\rmd{\mathrm{d}}
\def\rme{\mathrm{e}}
\def\rset{\mathbb{R}}


\def\calC{\mathcal{C}}
\def\bgammatheta{\bar{\theta}_{\gamma}}
\def\covgammatheta{\bar{\Sigma}_{\gamma}}
\def\L{\operatorname{L}}
\def\H{\operatorname{H}^{\star}}

\newcommand{\continuation}{??}


\def\eqsp{\,}
\renewcommand{\vec}[1]{\operatorname{vec}\left( #1 \right)}

\newcommand{\indi}[1]{\mathbbm{1}_{#1}}

\def\P{\mathsf{P}}

\def\PE{\mathsf{E}}
\def\E{\PE}

\newcommandx{\CPE}[3][1=]{\mathsf{E}_{#1}\bigl[  #2 | #3 \bigr]}

\def\B{\mathsf{B}}

\def\nset{\mathbb{N}}
\def\rset{\mathbb{R}}
\def\R{\rset}

\DeclareMathAlphabet\mathbfcal{OMS}{cmsy}{b}{n} 

\newtheorem{theorem}{Theorem}
\crefname{theorem}{theorem}{Theorems}
\Crefname{Theorem}{Theorem}{Theorems}

\newaliascnt{lemma}{theorem}
\newtheorem{lemma}[lemma]{Lemma}
\aliascntresetthe{lemma}
\crefname{lemma}{lemma}{lemmas}
\Crefname{Lemma}{Lemma}{Lemmas}

\newaliascnt{corollary}{theorem}
\newtheorem{corollary}[corollary]{Corollary}
\aliascntresetthe{corollary}
\crefname{corollary}{corollary}{corollaries}
\Crefname{Corollary}{Corollary}{Corollaries}

\newaliascnt{proposition}{theorem}
\newtheorem{proposition}[proposition]{Proposition}
\aliascntresetthe{proposition}
\crefname{proposition}{proposition}{propositions}
\Crefname{Proposition}{Proposition}{Propositions}

\newaliascnt{definition}{theorem}

\aliascntresetthe{definition}
\crefname{definition}{definition}{definitions}
\Crefname{Definition}{Definition}{Definitions}

\newaliascnt{definitionProposition}{theorem}

\aliascntresetthe{definitionProposition}
\crefname{Proposition and Definition}{Proposition and Definition}{Proposition and Definition}
\Crefname{Proposition and Definition}{Proposition and Definition}{Proposition and Definition}

\newaliascnt{remark}{theorem}

\aliascntresetthe{remark}
\crefname{remark}{remark}{remarks}
\Crefname{Remark}{Remark}{Remarks}

\crefname{example}{example}{examples}
\Crefname{Example}{Example}{Examples}

\crefname{figure}{figure}{figures}
\Crefname{Figure}{Figure}{Figures}

\newtheorem{assumption}{\textbf{A}\hspace{-3pt}}
\Crefname{assumption}{\textbf{A}\hspace{-3pt}}{\textbf{A}\hspace{-3pt}}
\crefname{assumption}{\textbf{A}}{\textbf{A}}

\newtheorem{assumptionC}{\textbf{C}\hspace{-3pt}}
\Crefname{assumptionC}{\textbf{C}\hspace{-3pt}}{\textbf{C}\hspace{-3pt}}
\crefname{assumptionC}{\textbf{C}}{\textbf{C}}

\def\Zset{\mathsf{Z}}
\def\Zsigma{\mathcal{Z}}

\def\metricz{\mathsf{d}_{\Zset}}

\newcommand{\Wass}[1]{\mathbf{W}_{{#1}}}

\def\mrl\mathrm{L}
\def\F{{\mathcal{F}}}
\def\MK{{\rm Q}}
\def\MKK{{\rm K}}

\def\msa{\mathsf{A}}

\newcommand{\dto}{\overset{d}{\to}}

\def\iid{i.i.d.}
\def\gauss{\mathrm{N}}

\newcommand{\parenthese}[1]{\left( #1 \right)}

\usepackage{hyperref}
\usepackage{url}

\title{Nonasymptotic Analysis of Stochastic Gradient Descent with the Richardson–Romberg Extrapolation}

\author{%
Marina Sheshukova$^{1,5}$ \quad Denis Belomestny$^{2,1}$ \quad Alain Durmus $^{3}$  \quad \textbf{Eric Moulines}$^{3,4}$ \quad \\
\textbf{Alexey Naumov}$^{1,6}$ \quad \textbf{Sergey Samsonov}$^{1}$
\\
$^1$HSE University \quad $^2$Duisburg-Essen University \quad $^3$CMAP, UMR 7641, Ecole Polytechnique \\
$^4$Mohamed Bin Zayed University of AI \quad $^5$ Skolkovo Institute of Science and Technology \\
$^6$ Steklov Mathematical Institute of Russian Academy of Sciences\\ \texttt{\{msheshukova,anaumov,svsamsonov\}@hse.ru}\\ \texttt{\{alain.durmus,eric.moulines\}@polytechnique.edu}\\ \texttt{denis.belomestny@uni-due.de}
}

%

\iclrfinalcopy 
\begin{document}

\maketitle

\begin{abstract}
We address the problem of solving strongly convex and smooth minimization problems using stochastic gradient descent (SGD) algorithm with a constant step size. Previous works  suggested to combine the Polyak-Ruppert averaging procedure with the Richardson-Romberg extrapolation to reduce the asymptotic bias of SGD at the expense of a mild increase of the variance. We significantly extend previous results by providing an  expansion of the mean-squared error of the resulting estimator with respect to the number of iterations $n$. We show that the root mean-squared error can be decomposed into the sum of two terms: a leading one of order $\mathcal{O}(n^{-1/2})$ with  explicit dependence on a minimax-optimal asymptotic covariance matrix, and a second-order term of order $\mathcal{O}(n^{-3/4})$, where the power $3/4$ is best known. We also extend this result to the higher-order moment bounds. Our analysis relies on the properties of the SGD iterates viewed as a time-homogeneous Markov chain. In particular, we establish that this chain is geometrically ergodic with respect to a suitably defined weighted Wasserstein semimetric.
\end{abstract}

\section{Introduction}
\label{sec:intro}
Stochastic gradient methods are a fundamental approach for solving a wide range of optimization problems, with a broad range of applications including generative modeling \citep{goodfellow2014generative,GoodBengCour16}, empirical risk minimization \citep{van2000asymptotic}, and reinforcement learning \citep{Sutton1998,schulman2015trust,deeprl}. These methods solve the stochastic minimization problem
\begin{equation}
\label{eq:stoch_minimization}
\min_{\theta \in \rset^{d}} f(\theta) \eqsp, \qquad \nabla f(\theta) = \PE_{\xi \sim \PP_{\xi}}[\nabla F(\theta,\xi)] \eqsp, 
\end{equation}
where $\xi$ is a random variable with distribution $\PP_{\xi}$, and the  gradient $\nabla f$ of the function $f$ can be accessed only through (unbiased) noisy estimates $\nabla F$. Throughout this paper, we consider  strongly convex minimization problems admitting a unique solution $\thetas$. Arguably the simplest and one of the most widely used approaches to solve \eqref{eq:stoch_minimization} is the stochastic gradient descent (SGD). This algorithm constructs the sequence of updates 
\begin{equation}
\label{eq:sgd_recursion_main}
\theta_{k+1} = \theta_{k} - \gamma_{k+1} \nabla F(\theta_k,\xi_{k+1})\eqsp, \quad \theta_0 \in \rset^{d}\eqsp, 
\end{equation}
where $\{\gamma_k\}_{k \in \nset}$ are step sizes, either diminishing or constant, and $\{\xi_k\}_{k \in \nset}$ is a sequence of  \iid\ random variables with distribution $\PP_{\xi}$. The algorithm \eqref{eq:sgd_recursion_main} can be viewed as a special instance of the Robbins-Monro procedure \citep{robbins1951stochastic}. While the SGD algorithm remains one of the core algorithms in statistical inference, its performance can be enhanced by means of additional techniques that use e.g., momentum \citep{qian1999momentum}, averaging \citep{ruppert1988efficient,polyak1992acceleration}, or variance reduction \citep{defazio2014saga,nguyen2017sarah}. In particular, the celebrated Polyak-Ruppert algorithm proceeds with a trajectory-wise  averaging of the estimates
\begin{equation}
\label{eq:pr_averaged_est_gen}
\bar{\theta}_{n_0,n} = \frac{1}{n}\sum_{k=n_0+1}^{n+n_0}\theta_{k}\eqsp
\end{equation}
for some \(n_0>0.\) It is known \citep{polyak1992acceleration, fort2015central}, that under appropriate assumptions on $f$ and $\gamma_k$, the sequence of estimates $\{\bar{\theta}_{n_0,n}\}_{n \in \nset}$ is asymptotically normal, that is,
\begin{equation}
\label{eq:CLT_fort}
\sqrt{n}(\bar{\theta}_{n_0,n} - \thetas) \dto \gauss(0,\Sigma_\infty)\eqsp, \quad n\to \infty
\end{equation}
where $\dto$ denotes the convergence in distribution and $\gauss(0,\Sigma_\infty)$ denotes the zero-mean Gaussian distribution with covariance matrix $\Sigma_\infty$, which is asymptotically optimal, see \citet{fort2015central} for a discussion. On the other hand, quantitative counterparts of \eqref{eq:CLT_fort} rely on the root mean squared error (root-MSE) bounds of the form
\begin{equation}
\label{eq:rate_setting0}
\PE^{1/2}[\norm{\bar{\theta}_{n_0,n} - \thetas}^2] \leq \frac{\sqrt{\trace{\Sigma_{\infty}}}}{n^{1/2}} + \frac{C(f,d)}{n^{1/2 + \delta}} + \Rem(\norm{\theta_0 - \thetas},n)\eqsp.
\end{equation}
Here $\Rem(\norm{\theta_0 - \thetas},n)$ is a remainder term which reflects the dependence upon initial condition, $C(f,d)$ is some instance-dependent constant and $\delta > 0$. There are many studies establishing \eqref{eq:rate_setting0} for Polyak-Ruppert averaged SGD under various model assumptions, including \citet{moulines2011,gadat2017optimal}. In particular, \citet{li2022root}  derived the bound \eqref{eq:rate_setting0} with the rate $\delta = 1/4$, which is the best known among first-order methods, but their results apply to a modified two-timescale algorithm with multiple restarts (Root-SGD). In our work, we show that the same upper bound is achieved by a simple modification of the estimate $\bar{\theta}_{n_0,n}$ based on Richardson-Romberg extrapolation. The main contributions of the current paper are as follows:
\begin{itemize}[topsep=0pt]
\item We show that a version of SGD algorithm with constant step size, Polyak-Ruppert averaging, and Richardson-Romberg extrapolation lead to the root-MSE bound \eqref{eq:rate_setting0} with $\delta = 1/4$ when applied to strongly convex minimization problems. We obtain this result by leveraging the analysis of iterates of the constant step-size SGD as a Markov chain. It is important to note that this result is obtained  for a fixed step  size $\gamma$ of order  $ 1/\sqrt{n}$ with $n$ being a total number of iterations. This result requires that the number of samples $n$ is known a priori to optimize the step size $\gamma$.
\item We generalize the above result and obtain high-order moment bounds on the error. Selecting the constant step size $\gamma = 1/\sqrt{n},$ we obtain for $p \geq 2$ bounds of the form
\begin{equation}
\label{eq:rate_setting_rr}
\PE^{1/p}[\norm{\prtheta_{n}^{(RR)} - \thetas}^p] \leq \frac{c_0 p^{1/2}\sqrt{\trace{\Sigma_{\infty}}}}{n^{1/2}} + \frac{C(f,d,p)}{n^{3/4}} + \Rem(\norm{\theta_0 - \thetas},n,p)\eqsp,
\end{equation}
where $c_0$ is a universal constant, and $\prtheta_{n}^{(RR)}$ is a counterpart of $\bar{\theta}_{n_0,n}$ when using Richardson-Romberg extrapolation, see related definitions in \Cref{sec:richardson-romberg}. Our proof is based on a novel version of the Rosenthal inequality, which might be of independent interest. 
\end{itemize}
The rest of the paper is organized as follows. We provide a literature review on the non-asymptotic analysis of the SGD algorithm and its modifications, with an emphasis on constant step-size SGD and the Richardson-Romberg procedure in \Cref{sec:lit_review}. Next, we analyze constant step-size SGD by treating it as a Markov chain and study the properties of the Polyak-Ruppert averaged estimator \eqref{eq:pr_averaged_est_gen} in \Cref{sec:analysis_sgd}. In \Cref{sec:richardson-romberg}, we analyze the properties of Richardson-Romberg extrapolation applied to Polyak-Ruppert averaged SGD and derive bounds for the second-order and higher-order moments of the error.

\paragraph{Notations and definitions.} 
For $\theta_0, \ldots,\theta_k$ being the iterates of stochastic first-order method, we denote $\F_{k} = \sigma(\theta_0,\theta_1,\ldots,\theta_k)$. Let $(\Zset,\metricz)$ be a complete separable metric space equipped with its Borel $\sigma$-algebra $\Zsigma$. We call a function $c: \Zset \times \Zset \rightarrow \mathbb{R}_{+}$ a \emph{distance-like} function, if it is symmetric, lower semi-continuous and
$c(x, y) = 0$ if and only if $x = y$, and there exists $q \in \nset$ such that
$\metricz^q(x, y) \leq c(x, y) $. We denote by $\mathscr{C}(\xi, \xi')$ the set of couplings of probability measures $\xi$ and $\xi'$, that is, a set of probability measures on $(\Zset^{2},\Zsigma^{\otimes 2})$, such that for any $\Pi \in \mathscr{C}(\xi, \xi')$ and any $A \in \Zsigma$ it holds $\Pi(\Zset \times A)= \xi'(A)$ and $\Pi(A \times \Zset)= \xi(A)$. We define the Wasserstein semimetric associated to the distance-like function $c(\cdot,\cdot)$, as 
\begin{equation}
\label{eq:def_wasserstein_distanse}
\Wass{c}(\xi,\xi') = \underset{\Pi \in \mathscr{C}(\xi, \xi')}{\inf}\int_{\Zset \times \Zset} c(z, z')\Pi(\rmd z, \rmd z')\eqsp.
\end{equation}
Note that $\Wass{c}(\xi,\xi')$ is not necessarily a distance, as it may fail to satisfy the triangle inequality. In the particular case of $\Zset = \rset^{d}$, and $c_p(x, y) = \norm{x-y}^p$, $x,y \in \rset^{d}$, $p\geq 1$, we use a short notation for $\Wass{p}(\xi,\xi')$ defined by $\Wass{p}^{p}(\xi,\xi') = \Wass{c_p}(\xi,\xi')$. For $x, y \in \rset^d$ denote by $x \otimes y$ the tensor product of $x$ and $y$ and by $x^{\otimes k}$ the $k$-th tensor power of $x$. In addition, for a function $f: \rset^d \rightarrow \rset$ we denote by $\nabla^k f(\theta)$ the $k$-th differential of $f$, that is  $\nabla^k f(\theta)_{i_1, \cdots, i_k} = \frac{\partial^k f}{\prod_{j=1}^k \partial \theta_{i_j}}$. For any tensor $M \in (\rset^d)^{\otimes (k-1)}$, we define $\nabla^k f(\theta) M \in \rset^{d}$ by the relation $(\nabla^k f(\theta) M)_l = \sum_{i_1, \ldots, i_{k-1}} M_{i_1, \cdots, i_{k-1}}\nabla^k f(\theta)_{i_1, \cdots, i_{k-1}, l}$, where $l \in  \{1, \ldots ,d\}$. Also for any tensor $M \in (\rset^d)^{\otimes (k-1)}$ we define $\norm{M} = \underset{x^l\neq 0, l\in\{1,\ldots,k\}}{\sup}\frac{\sum_{i_1, \ldots, i_k}M_{i_1, \ldots, i_k}x^1_{i_1}\cdot \ldots \cdot x^{k}_{i_{k}}}{\norm{x^1}\cdot\ldots\cdot\norm{x^{k}}}$. For two sequences $\{a_n\}_{n \in \nset}$ and $\{b_n\}_{n \in \nset}$ we write $a_n \lesssim b_n$, if there is an absolute constant $c > 0$, such that $a_n \leq c b_n$ for any $n \in \nset$. Throughout this paper we use $c_0$ for an absolute constant, which values may vary from line to line.

\section{Literature review}
\label{sec:lit_review}
Constant step-size SGD has been widely studied in the literature. Its bias and MSE were studied for strongly convex problems in \citep{durmus2020biassgd}, for both the last iterate and the Polyak-Ruppert averaging. \citet{yu2021analysis} studied the bias and the asymptotic normality of the last iterate of SGD for non-convex problems under the Polyak-Lojasiewicz condition and its generalizations. \citet{vlatakis2023stochastic} considered constant step size methods for solving variational inequalities, characterizing the bias and establishing asymptotic normality. \cite{merad2023convergence} studied the convergence of constant step-size SGD iterates in $\Wass{p}$ distance, $p \geq 1$, and provided concentration bounds for both $\theta_n$ and $\bar{\theta}_{n_0,n}$. \citet{moulines2011} derived the root-MSE bound \eqref{eq:rate_setting0} with $\delta =1/6$  for the case of strongly convex functions $f$. \citet{gadat2017optimal} derived an MSE counterpart to \eqref{eq:rate_setting0} of the form 
\[ 
\textstyle 
\PE[\norm{\bar{\theta}_{n_0,n} - \thetas}^2] \leq \frac{\trace{\Sigma_{\infty}}}{n} + \frac{C(f,d)}{n^{1+\beta}}
\]
with $\beta = 1/4$, translating to the root-MSE bound \eqref{eq:rate_setting0} with $\delta =1/8$. \citet{li2022root} suggested the Root-SGD algorithm combining the ideas of the two-timescale stochastic approximation and multiple restarts, establishing a counterpart of \eqref{eq:rate_setting0} with $\delta = 1/4$. The recent series of papers \citep{huo2023bias,zhang2024constant,zhang2024prelimit} investigated stochastic approximation algorithms with both i.i.d. and Markovian data and constant step sizes. The authors proposed precise characterization of the bias together with an MSE bounds after applying the Richardson-Romberg correction. At the same time, these works only considered the $2$-nd moment of the error and did not aim to identify the leading term of the MSE bound, which aligns with the CLT covariance matrix $\Sigma_{\infty}$.

Richardson-Romberg (RR) extrapolation is a technique used to improve the accuracy of numerical approximations \citep{hildebrand1987introduction}, such as those from numerical differentiation or integration. It involves using approximations with different step sizes and then extrapolating to reduce the error, typically by removing the leading term in the error expansion. The one-step RR extrapolation was introduced to reduce the discretization error induced by an Euler scheme to simulate stochastic differential equation in \citet{talaytubaro90}, and later generalized for non-smooth functions in \citet{bally1996richardsonnonsmooth}. This technique was extended using multistep discretizations in \citet{pages2007}. RR extrapolation have been applied to Stochastic Gradient Descent (SGD) methods in \citet{durmus2016stochastic}, \citet{durmus2020biassgd}, \citet{merad2023convergence} and \citet{huo2024effectiveness},  to improve convergence and reduce error in optimization problems, particularly when dealing with noisy or high-variance gradient estimates. Recent papers \citep{allmeier2024computing,zhang2024constant,huo2024effectiveness,kwon2024two} consider applications of RR to different stochastic approximation problems with constant step-size, including $Q$-learning, and single- and two-timescale stochastic approximation.

\vspace{-2mm}
\section{Finite-time analysis of the SGD dynamics for strongly convex minimization problems}
\label{sec:analysis_sgd}
\subsection{Geometric ergodicity of SGD iterates}

We consider the following assumption on the function $f$ in the minimization problem \eqref{eq:stoch_minimization}.
\begin{assumption}
\label{ass:mu-convex}
The function $f$ is continuously differentiable and $\mu$-strongly convex on $\rset^{d}$, that is, there exists a constant $\mu > 0$, such that for any $\theta,\theta' \in \rset^{d}$, it holds that
\begin{equation}
\label{eq:strong_convex}
\frac{\mu}{2}\norm{\theta-\theta'}^2 \leq f(\theta) - f(\theta') - \langle \nabla f(\theta'), \theta - \theta' \rangle \eqsp.
\end{equation}
\end{assumption}

\begin{assumption}
\label{ass:L-smooth}
The function $f$ is $4$ times continuously differentiable and $\L_{2}$-smooth on $\rset^{d}$, that is, there is a constant $\L_{2} \geq 0$, such that for any $\theta,\theta' \in \rset^{d}$,
\begin{equation}
\label{eq:L-smooth}
\norm{\nabla f(\theta) - \nabla f(\theta')} \leq \L_{2} \norm{\theta - \theta'}\eqsp.
\end{equation}
Moreover, $f$ has bounded $3$-rd and $4$-th derivatives, that is, there exist $\L_3,\L_4\geq 0$ such that
\begin{equation}
\label{eq: bound_3_rd_derivative}
\norm{\nabla^{i}f(\theta)} \leq \L_{i}\eqsp \text{ for } i \in \{3,4\}\eqsp.
\end{equation}
\end{assumption}
 We aim to solve the problem \eqref{eq:stoch_minimization} using SGD with a constant step size, starting from initial distribution $\nu$. That is, for $k \geq 0$ and a step size $\gamma > 0,$ we consider the following recurrent scheme
\begin{equation}
\label{eq:sgd_recursion}
\theta^{(\gamma)}_{k+1} = \theta^{(\gamma)}_{k} - \gamma \nabla F(\theta^{(\gamma)}_k, \xi_{k+1})\eqsp, \quad \theta^{(\gamma)}_{0} = \theta_0 \sim \nu\eqsp, 
\end{equation}
where $\{\xi_{k}\}_{k \in \nset}$ satisfy the following condition: 
\begin{assumption}[p]
\label{ass:rand_noise}
$\{\xi_k\}_{k \in \nset}$ is a sequence of independent and identically distributed (\iid) random variables with distribution $\PP_{\xi}$, such that $\xi_i$ and $\theta_0$ are independent and for any  $\theta \in \rset^{d}$ it holds that
\[
\PE_{\xi \sim \PP_{\xi}}[\nabla F(\theta,\xi)] = \nabla f(\theta)\eqsp.
\]
Moreover, there exists $\tau_p$, such that $\E^{1/p}[\norm{\nabla F(\thetas, \xi)}^p] \leq \tau_p$, and for any $q = 2,\ldots,p$ it holds with some $\L_1 > 0$ that for any $\theta_1, \theta_2 \in \R^d$, 
\begin{equation}
\label{eq:L-co-coercivity-assum}
\L_1^{q-1} \norm{\theta_1-\theta_2}^{q-2} \langle \nabla f(\theta_1) - \nabla f(\theta_2),\theta_1-\theta_2\rangle \geq \PE_{\xi \sim \PP_{\xi}}[\norm{\nabla F(\theta_1, \xi) - \nabla F(\theta_2, \xi)}^{q}]  \eqsp.
\end{equation}
\end{assumption}

Assumption \Cref{ass:rand_noise}($p$) generalizes the well-known $\L_1$-co-coercivity assumption, see \citet{durmus2020biassgd}. A sufficient condition which allows for \Cref{ass:rand_noise}($p$) is to assume that $F(\theta,\xi)$ is $\PP_{\xi}$-a.s. convex with respect to $\theta \in \rset^{d}$. For ease of notation, we set
\begin{equation}
\label{eq:const_L_def}
\L = \max(\L_1, \L_2,\L_3,\L_4)\eqsp,
\end{equation}
and trace only dependence upon $\L$ in our subsequent bounds. In this paper we focus on the convergence to $\thetas$ of the Polyak-Ruppert averaged estimator defined for any $n \geq 0$, by
\begin{equation}
\label{eq:pr_averaged_est} 
\bar{\theta}_{n}^{(\gamma)} = \frac{1}{n}\sum_{k=n+1}^{2n}\theta^{(\gamma)}_{k}\eqsp.
\end{equation}
Many previous studies instead consider $\bar{\theta}_{n_0,n} = \frac{1}{n-n_0} \sum_{k=n_0+1}^{n} \theta_{k}^{(\gamma)}$ rather than $\bar{\theta}_{n}^{(\gamma)}$, where $n \geq n_0 + 1$ and $n_0$ denotes a burn-in period. However, when the sample size $n$ is sufficiently large, the choice of the optimal burn-in size $n_0$ affects the leading terms in the MSE bound of $\bar{\theta}_{n}^{(\gamma)} - \theta^\star$ only by a constant factor. Therefore, we focus on \eqref{eq:pr_averaged_est}, or equivalently, use $2n$ observations and set $n_0 = n$.
\vspace{-3mm}
\paragraph{Properties of $\{\theta_k^{(\gamma)}\}_{k \in \nset}$ viewed as a Markov chain.}
Under assumptions \Cref{ass:mu-convex},  \Cref{ass:L-smooth} and \Cref{ass:rand_noise}($2$), the sequence $\{\theta_k^{(\gamma)}\}_{k \in \nset}$ defined by the relation \eqref{eq:sgd_recursion} is a time-homogeneous Markov chain with the Markov kernel 
\begin{equation}
\label{eq:mk_kernel_theta}
\MK_{\gamma}(\theta,\msa) = \int_{\rset^{d}} \indi{\msa}(\theta - \gamma \nabla F(\theta, z)) \P_{\xi}(\rmd z) \eqsp, \quad \theta \in \rset^{d}\eqsp, \, \msa \in \B(\rset^{d}) \eqsp,
\end{equation}
where $\B(\rset^d)$ is a Borel $\sigma$-field of $\rset^d$. In \citet{durmus2020biassgd} it has been established that, under the stated assumptions, $\MK_{\gamma}$ admits a unique invariant distribution $\pi_{\gamma}$, if $\gamma$ is small enough. Previous studies, such as \citet{durmus2020biassgd} or \citet{merad2023convergence}, studied the convergence of the distributions of $\{\theta_k^{(\gamma)}\}_{k \in \nset}$ to $\pi_{\gamma}$ in the $p$-Wasserstein distance $\Wass{p}$, $p \geq 1$, associated with the Euclidean distance in $\rset^{d}$. Our main results require to switch to the non-standard  distance-like function, which is defined under \Cref{ass:mu-convex} and \Cref{ass:rand_noise}($2$) as follows: 
\begin{equation}
\label{eq:function_c_def}
c(\theta, \theta') = \norm{\theta-\theta'}\left(\norm{\theta - \theta^*} + \norm{\theta' - \theta^*} + \frac{2 \sqrt{2} \tau_2 \sqrt{\gamma}}{\sqrt{\mu}}\right)\eqsp, \qquad \theta,\theta'\in\rset^d \eqsp.
\end{equation}
Here the constants $\tau_2$ and $\mu$ are given in \Cref{ass:rand_noise}($2$) and \Cref{ass:mu-convex}, respectively. This distance-like function is designed to analyze $\{\theta_k^{(\gamma)}\}_{k \in \nset}$ under \Cref{ass:mu-convex} and \Cref{ass:rand_noise}($2$). In particular, it depends on the step size $\gamma$ and $\thetas$. Our first main result establishes \emph{geometric ergodicity} of the Markov kernel $\MK_{\gamma}$ with respect to the distance-like function $c$ from \eqref{eq:function_c_def}. 
\begin{proposition}
\label{lem:Wasserstein_ergodicity}
Assume \Cref{ass:mu-convex},  \Cref{ass:L-smooth}, and \Cref{ass:rand_noise}($2$). Then for any $\gamma \in (0;1/(2\L)],$ the Markov kernel $\MK_{\gamma}$ defined in \eqref{eq:mk_kernel_theta} admits a unique invariant distribution $\pi_{\gamma}$. Moreover, $\MK_{\gamma}$ is geometrically ergodic with respect to the cost function $c$, that is, for any initial distribution $\nu$ on $\rset^d$ and $k \in \nset$,
\begin{equation}
\Wass{c}(\nu\MK_{\gamma}^k, \pi_{\gamma}) \leq 4(1/2)^{k/m(\gamma)}\Wass{c}(\nu, \pi_{\gamma}) \eqsp,
\end{equation}
where $m(\gamma) = \lceil 2\log 4/(\gamma \mu)\rceil$.
\end{proposition}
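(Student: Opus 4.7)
The plan is to analyze the synchronous coupling of two copies of the chain driven by the same innovations, establish two separate second-moment recursions, and combine them via Cauchy-Schwarz; the particular additive constant in $c$ is precisely what turns the combination into a multiplicative contraction in $\Wass{c}$.

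\textbf{Moment bounds.} Let $(\theta_k,\theta_k')$ be the pair obtained by running \eqref{eq:sgd_recursion} from $(\theta_0,\theta_0')$ with a common noise sequence $\{\xi_{k+1}\}$; this defines a Markov coupling of $\MK_\gamma$ with itself. Expanding $\norm{\theta_{k+1}-\theta_{k+1}'}^2$, taking $\PE[\cdot\mid\F_k]$, and using strong convexity (\Cref{ass:mu-convex}) together with the $q=2$ co-coercivity bound \eqref{eq:L-co-coercivity-assum} yields
\[
\PE[\norm{\theta_{k+1}-\theta_{k+1}'}^2\mid\F_k]\leq\bigl(1-\gamma\mu(2-\gamma\L_1)\bigr)\norm{\theta_k-\theta_k'}^2\leq(1-\gamma\mu)\norm{\theta_k-\theta_k'}^2
\]
for $\gamma\leq 1/(2\L)$, so that $\PE[\norm{\theta_k-\theta_k'}^2]\leq(1-\gamma\mu)^k\norm{\theta_0-\theta_0'}^2$. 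Running the same expansion centered at $\thetas$ and using $\nabla f(\thetas)=0$ together with $\PE[\norm{\nabla F(\thetas,\xi)}^2]\leq\tau_2^2$ gives the Lyapunov drift $\PE[\norm{\theta_{k+1}-\thetas}^2\mid\F_k]\leq(1-\gamma\mu)\norm{\theta_k-\thetas}^2+2\gamma^2\tau_2^2$, whence
\[
\PE^{1/2}[\norm{\theta_k-\thetas}^2]\leq\norm{\theta_0-\thetas}+\sqrt{2\gamma/\mu}\,\tau_2=\norm{\theta_0-\thetas}+r/2,\qquad r:=2\sqrt{2}\,\tau_2\sqrt{\gamma}/\sqrt{\mu},
\]
and the same bound for $\theta_k'$.

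\textbf{Contraction in $\Wass{c}$ and invariant measure.} Applying Cauchy-Schwarz and the $L^2$-triangle inequality to the synchronous coupling,
\[
\PE[c(\theta_k,\theta_k')]\leq\PE[\norm{\theta_k-\theta_k'}^2]^{1/2}\bigl(\PE^{1/2}[\norm{\theta_k-\thetas}^2]+\PE^{1/2}[\norm{\theta_k'-\thetas}^2]+r\bigr).
\]
Inserting the two moment bounds above, the second factor is dominated by $\norm{\theta_0-\thetas}+\norm{\theta_0'-\thetas}+2r\leq 2(\norm{\theta_0-\thetas}+\norm{\theta_0'-\thetas}+r)$, so $\PE[c(\theta_k,\theta_k')]\leq 2(1-\gamma\mu)^{k/2}c(\theta_0,\theta_0')$. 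Taking the infimum over couplings of $(\nu,\nu')$ gives $\Wass{c}(\nu\MK_\gamma^k,\nu'\MK_\gamma^k)\leq 2(1-\gamma\mu)^{k/2}\Wass{c}(\nu,\nu')$. With $m(\gamma)=\lceil 2\log 4/(\gamma\mu)\rceil$ one has $2(1-\gamma\mu)^{m(\gamma)/2}\leq 2e^{-\log 4}=1/2$, so writing $k=j\,m(\gamma)+s$ with $0\leq s<m(\gamma)$ and iterating the $m(\gamma)$-block contraction produces $\Wass{c}(\nu\MK_\gamma^k,\nu'\MK_\gamma^k)\leq 2(1/2)^{j}\Wass{c}(\nu,\nu')\leq 4(1/2)^{k/m(\gamma)}\Wass{c}(\nu,\nu')$, since $j\geq k/m(\gamma)-1$. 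Existence and uniqueness of an invariant distribution $\pi_\gamma$ follow because this contraction makes $\{\delta_{\thetas}\MK_\gamma^{jm(\gamma)}\}_{j\in\nset}$ a Cauchy sequence in $\Wass{c}$, whose limit is invariant and unique thanks to the uniform $c$-moment control from the Lyapunov drift; substituting $\nu'=\pi_\gamma$ in the bound above concludes.

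\textbf{Main obstacle.} The delicate point is the design of $c$ itself. The additive constant $r=2\sqrt{2}\,\tau_2\sqrt{\gamma/\mu}$ is calibrated precisely to dominate the stationary noise scale $\sqrt{2\gamma/\mu}\,\tau_2$ produced by the Lyapunov drift around $\thetas$, so that the second Cauchy-Schwarz factor is bounded, uniformly in $k$, by $2c(\theta_0,\theta_0')/\norm{\theta_0-\theta_0'}$. Without this tuning, the residual noise level in the drift inequality would produce an additive $O(\sqrt{\gamma})$ term that accumulates over iterations, degrading the multiplicative contraction to a mixed additive/multiplicative bound and preventing geometric ergodicity in any single fixed semi-metric.
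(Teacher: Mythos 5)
Your proposal is correct and follows essentially the same route as the paper's proof: synchronous coupling, the one-step $L^2$ contraction from \Cref{ass:rand_noise}($2$), the Lyapunov drift around $\thetas$ to control the multiplicative factor in $c$, and Cauchy--Schwarz/Minkowski to obtain the block contraction $2(1-\gamma\mu)^{k/2}$ in $\Wass{c}$. The only cosmetic difference is that you iterate the $m(\gamma)$-block contraction and sketch the completeness/fixed-point argument by hand, where the paper delegates both the existence--uniqueness of $\pi_\gamma$ and the iterated bound to \citet[Theorem 20.3.4]{douc:moulines:priouret:soulier:2018}.
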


\paragraph{Discussion.} The proof of \Cref{lem:Wasserstein_ergodicity} is provided in \Cref{sec:proof_wass_ergodicity}. Properties of the invariant distribution $\pi_{\gamma}$ were previously studied in literature, see e.g. \cite{durmus2020biassgd}. It particular, it is known \cite[Lemma~13]{durmus2020biassgd}, that the $2$-nd moment of $\pi_{\gamma}$ scales linearly with $\gamma$:
\begin{equation}
\label{eq:moment_stationary}
\int_{\rset^{d}}\norm{\theta-\thetas}^2 \pi_{\gamma}(\rmd \theta) \lesssim \frac{\gamma \tau_2}{\mu}\eqsp.
\end{equation}
This bound yields, using Lyapunov's inequality, that
\[ 
\int_{\rset^{d} \times \rset^{d}} \norm{\theta-\theta^{\prime}} \pi_{\gamma}(\rmd \theta) \pi_{\gamma}(\rmd \theta^{\prime}) \lesssim \sqrt{\frac{ \gamma \tau_2}{\mu}}\eqsp. 
\]
At the same time, expectation of the cost function $c(\theta,\theta')$ scales linearly with the step size $\gamma$:
\begin{equation}
\label{eq:moment_scaling_cost_function}
\int_{\rset^{d}}c(\theta,\theta^{\prime}) \pi_{\gamma}(\rmd \theta) \pi_{\gamma}(\rmd \theta^{\prime}) \lesssim \frac{ \gamma \tau_2}{\mu}\eqsp.
\end{equation}
The property \eqref{eq:moment_scaling_cost_function} is crucial to obtain tighter  error bounds with respect to the step size $\gamma$ for the Richardson-Romberg estimator, as well as to obtain the improved version of the Rosenthal inequality for additive functional of $\{\theta_k^{(\gamma)}\}_{k \in \nset}$ derived in \Cref{prop:psi_p_moment_bound}. Precisely, the additional $\sqrt{\gamma}$ factor obtained in  \eqref{eq:moment_scaling_cost_function} would allow us to obtain sharper bounds on the remainder terms in \Cref{th:RR_second_moment}. Next, we analyze the error $\theta_{\infty}^{(\gamma)} - \thetas$ where $\theta_{\infty}^{(\gamma)}$ is distributed according to the stationary  distribution $\pi_{\gamma}$. To this end, we consider the following condition.
\begin{assumptionC}[p]
\label{ass:stationary_moments_bounds}
There exist constants $\Constlast{p}, \Conststep{p} \geq 2$ depending only on $p$, such that for any step size $\gamma \in (0, 1/(\L \Conststep{p})]$, and any initial distribution $\nu$ it holds that 
\begin{equation}
\label{eq:arbitrary_distr_p_moment_bound} 
\PE_{\nu}^{2/p}\bigl[\norm{\theta_k^{(\gamma)} - \thetas}^{p}\bigr] \leq (1 - \gamma \mu)^{k} \PE_{\nu}^{2/p}\bigl[\norm{\theta_0 - \thetas}^{p}\bigr] + \Constlast{p} \gamma \tau_{p}^2 / \mu\eqsp.
\end{equation}
Moreover, for the stationary distribution $\pi_{\gamma}$ it holds that 
\begin{equation}
\label{eq:stationary_p_moment_bound}
\PE_{\pi_{\gamma}}^{2/p}\bigl[\norm{\theta_{\infty}^{(\gamma)} - \thetas}^{p}\bigr] \leq \Constlast{p} \gamma \tau_{p}^2 / \mu \eqsp.
\end{equation}
\end{assumptionC}
It is important to note that \Cref{ass:stationary_moments_bounds} is not independent from the preceding assumptions \Cref{ass:mu-convex} - \Cref{ass:rand_noise}($p$). In particular, \citet[Lemma~$13$]{durmus2020biassgd} establishes that, under \Cref{ass:mu-convex},\Cref{ass:L-smooth}, and \Cref{ass:rand_noise}($p$), the bound \eqref{eq:stationary_p_moment_bound} holds for $\gamma \in (0, 1/(\L \Conststep{p})]$ with some constants $\Constlast{p}$ and $\Conststep{p}$ depending only upon $p$. Unfortunately, it is difficult to obtain precise dependence of $\Conststep{p}$ and $\Constlast{p}$ on $p$, and to obtain \eqref{eq:stationary_p_moment_bound} with precise numerical constants. Existing studies \citep{gadat2017optimal, merad2023convergence} either use different set of assumptions or do not explicitly characterize their dependence on $p$. That is why we prefer to state \Cref{ass:stationary_moments_bounds}($p$) as a separate assumption. In the subsequent bounds we use \Cref{ass:stationary_moments_bounds}($p$) together with \Cref{ass:mu-convex},\Cref{ass:L-smooth}, and \Cref{ass:rand_noise}($p$), tracking the dependence of our bounds upon $\Conststep{p}$ and $\Constlast{p}$. We leave the derivation of \Cref{ass:stationary_moments_bounds}($p$) with precise constants $\Constlast{p}, \Conststep{p}$ as an interesting direction for future research.

Under assumption \Cref{ass:stationary_moments_bounds}, we control the fluctuations of $\theta_k^{(\gamma)}$ around $\thetas$. However, unless $f$ is quadratic, it is known that $\int_{\rset^{d}} \theta \pi_{\gamma}(\rmd \theta) \neq \thetas $.
In the next proposition, we quantify this bias under weaker assumptions than those in \citet[Theorem~4]{durmus2020biassgd}.

\begin{proposition}
\label{prop:bias_PR}
Assume \Cref{ass:mu-convex}, \Cref{ass:L-smooth}, \Cref{ass:rand_noise}($6$), and \Cref{ass:stationary_moments_bounds}($6$). Then there exist such $\Delta_{1} \in \rset^{d}, \Delta_2 \in \rset^{d\times d}$, not depending upon $\gamma$, that for any $\gamma \in (0, 1/(\L \Conststep{6})]$, it holds
\begin{equation}
\label{eq:bias_expansion_result}
\bgammatheta := \int_{\rset^{d}} \theta \pi_{\gamma}(\rmd \theta) = \thetas + \gamma \Delta_{1} + B_1 \gamma^{3/2}\eqsp,
\end{equation}
\begin{equation}
\label{eq:bias_expansion_result_2}
\covgammatheta := \int_{\rset^{d}} (\theta-\thetas)^{\otimes 2} \pi_{\gamma}(\rmd \theta) =  \gamma \Delta_2 + B_2 \gamma^{3/2}\eqsp.
\end{equation}
Here $B_1 \in \rset^{d}$ and $B_2 \in \rset^{d \times d}$ satisfy $\norm{B_1} \leq \ConstPR{b,1}$, $\norm{B_2}\leq \ConstPR{b,2}$, where $\ConstPR{b,1}$ and $\ConstPR{b,2}$ are constants independent of $\gamma$ and are defined in \eqref{eq: def_ConstPRb1,2}. Moreover, for any initial distribution $\nu$ on $\rset^{d}$, it holds that 
\begin{equation}
\label{eq:bias_PR_bound}
\PE_{\nu}[\bar{\theta}_{n}^{(\gamma)}] = \thetas + \gamma\Delta_{1} + B_1 \gamma^{3/2} + \Rem_1(\theta_0-\thetas, \gamma, n)\eqsp, 
\end{equation}
where 
\begin{equation}
\textstyle 
\norm{\Rem_1(\theta_0-\thetas, \gamma, n)} \lesssim \frac{\rme^{-\gamma\mu(n+1)/2}}{n\gamma\mu}\parenthese{\PE_{\nu}^{1/2}\bigl[\norm{\theta_0 -\thetas}^2\bigr] + \frac{\sqrt{\gamma}\tau_2}{\sqrt{\mu}}}\eqsp.
\end{equation}
\end{proposition}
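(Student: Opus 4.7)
The plan has three components. I derive expansions of $\covgammatheta$ and $\bgammatheta$ by applying the stationarity identity $\pi_\gamma = \pi_\gamma \MK_\gamma$ at the level of first and second moments together with Taylor expansion of $\nabla f$ around $\thetas$, and then I transfer the bias bound from $\bgammatheta$ to $\PE_\nu[\bar{\theta}_n^{(\gamma)}]$ via a synchronous coupling exploiting co-coercivity. Throughout, write $A = \nabla^2 f(\thetas)$, so $A \succeq \mu \Id$ by \Cref{ass:mu-convex}, and let $\theta_\infty^{(\gamma)} \sim \pi_\gamma$.

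For the covariance expansion, the invariance of $\pi_\gamma$ yields $\PE[(\theta_\infty^{(\gamma)} - \thetas - \gamma \nabla F(\theta_\infty^{(\gamma)}, \xi))^{\otimes 2}] = \covgammatheta$. Expanding the square, conditioning on $\theta_\infty^{(\gamma)}$ to replace $\nabla F$ with $\nabla f$ in the cross terms, and Taylor-expanding $\nabla f$ to first order at $\thetas$ gives, after dividing by $\gamma$, a perturbed Lyapunov equation $\covgammatheta A + A \covgammatheta = \gamma \noisecov + \gamma R_\Sigma(\gamma)$. The remainder $R_\Sigma(\gamma)$ collects (i) the Taylor error, bounded by $\L \PE[\|\theta_\infty^{(\gamma)}-\thetas\|^3] = O(\gamma^{3/2})$ from \Cref{ass:stationary_moments_bounds}($6$) and Jensen, and (ii) the noise correction $\PE[\nabla F(\theta_\infty^{(\gamma)}, \xi)^{\otimes 2}] - \noisecov$, which is $O(\sqrt\gamma)$ after writing $\nabla F(\theta_\infty^{(\gamma)},\xi) = \nabla F(\thetas,\xi) + [\nabla F(\theta_\infty^{(\gamma)},\xi) - \nabla F(\thetas,\xi)]$ and controlling the extra terms via co-coercivity (\Cref{ass:rand_noise}($2$)) and \eqref{eq:stationary_p_moment_bound}. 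Since $A$ is positive definite, the Lyapunov operator $X \mapsto XA + AX$ is invertible with inverse of norm $\lesssim 1/\mu$, so solving produces $\covgammatheta = \gamma \Delta_2 + B_2 \gamma^{3/2}$ with $\Delta_2$ defined by $\Delta_2 A + A \Delta_2 = \noisecov$ (independent of $\gamma$) and $\|B_2\|$ uniformly bounded.

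For the mean, stationarity combined with $\nabla f(\thetas) = 0$ gives $\PE[\nabla f(\theta_\infty^{(\gamma)})] = 0$. A third-order Taylor expansion of $\nabla f$ at $\thetas$, with remainder controlled by $\tfrac16 \L \PE[\|\theta_\infty^{(\gamma)}-\thetas\|^3] = O(\gamma^{3/2})$ using \Cref{ass:L-smooth}, leads to $A(\bgammatheta - \thetas) = -\tfrac12 \nabla^3 f(\thetas) \covgammatheta + O(\gamma^{3/2})$. Substituting the expansion of $\covgammatheta$ and inverting $A$ delivers \eqref{eq:bias_expansion_result} with $\Delta_1 = -\tfrac12 A^{-1} \nabla^3 f(\thetas) \Delta_2$ independent of $\gamma$ and $\|B_1\|$ uniformly bounded. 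To get \eqref{eq:bias_PR_bound}, I use a synchronous coupling: let $\theta_k^{(\gamma)}$ start from $\nu$ and $\theta_k^{\prime,(\gamma)}$ start from $\pi_\gamma$, both driven by the same noise $\{\xi_k\}$, so that $\theta_k^{\prime,(\gamma)} \sim \pi_\gamma$ and $\PE_\nu[\theta_k^{(\gamma)}] - \bgammatheta = \PE[\theta_k^{(\gamma)} - \theta_k^{\prime,(\gamma)}]$. The one-step contraction
\[
\PE\bigl[\|\theta_{k+1}^{(\gamma)} - \theta_{k+1}^{\prime,(\gamma)}\|^2\bigr] \leq (1-\gamma\mu)\,\PE\bigl[\|\theta_{k}^{(\gamma)} - \theta_{k}^{\prime,(\gamma)}\|^2\bigr]
\]
follows from co-coercivity ($q=2$) and strong convexity for $\gamma \leq 1/\L$. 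Iterating, bounding the initial distance by $\PE_\nu^{1/2}[\|\theta_0-\thetas\|^2] + \sqrt\gamma \tau_2/\sqrt\mu$ via \eqref{eq:moment_stationary} and the triangle inequality, summing over $k \in \{n+1,\ldots,2n\}$, and invoking $\sum_k (1-\gamma\mu)^{k/2} \lesssim \rme^{-\gamma\mu(n+1)/2}/(\gamma\mu)$ yields the announced bound on $\Rem_1$.

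The main technical hurdle is ensuring every remainder sits at the $O(\gamma^{3/2})$ scale rather than the naive $O(\gamma)$ one. This requires systematically exploiting $\|\theta_\infty^{(\gamma)}-\thetas\|_{L^p} = O(\sqrt\gamma)$ from \Cref{ass:stationary_moments_bounds}($6$) together with the precise bound on $\PE[\|\nabla F(\theta_\infty^{(\gamma)},\xi)-\nabla F(\thetas,\xi)\|^2]$ provided by co-coercivity, so that the Taylor remainder in the mean equation, the third-order tensor contraction with $\nabla^3 f(\thetas)$ in the Lyapunov equation, and the noise-covariance correction each contribute at the correct order.
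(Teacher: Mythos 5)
Your proposal is correct and follows essentially the same route as the paper: a stationarity identity combined with Taylor expansion of $\nabla f$ at $\thetas$ yields a perturbed Lyapunov equation for $\covgammatheta$ (with the noise-covariance correction controlled at order $\sqrt{\gamma}$ via co-coercivity, as in the paper's \Cref{lem: noise_covariance_bound}) and then the mean expansion via $\PE_{\pi_\gamma}[\nabla f]=0$, while the transfer to $\PE_\nu[\bar{\theta}_n^{(\gamma)}]$ uses the same synchronous coupling and $(1-\gamma\mu)$-contraction. The only cosmetic difference is that the paper retains the $-\gamma\H\covgammatheta\H$ term and inverts $\H\otimes\Id+\Id\otimes\H-\gamma\H\otimes\H$ by a Neumann series, whereas you absorb that $O(\gamma^2)$ contribution directly into the $\gamma^{3/2}$ remainder; both are valid.
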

The proof is provided in \Cref{sec:proof_prop_bias_PR_and_RR}. Results of this type are known in the literature for stochastic approximation algorithms, see e.g. \citet{huo2024collusion} and \citet{allmeier2024computing}. The additive term $\Delta_1$ vanishes when the function $f$ is quadratic, see \citep{moulines2011}. 

\vspace{-2mm}
\subsection{Analysis of the Polyak-Ruppert averaged estimator $\bar{\theta}_{n}^{(\gamma)}$.}
\vspace{-2mm}
\label{sec:second_moment_PR_no_bias_reduction}
In this section, we analyze the finite-sample properties of the estimator $\bar{\theta}_{n}^{(\gamma)}$ from \eqref{eq:pr_averaged_est}. The analysis is based on techniques previously used in \citet{moulines2011}, as well as in the analysis of the Polyak-Ruppert averaged LSA (Linear Stochastic Approximation) algorithms, see  \citet{mou2020linear, durmus2022finite}. Below we derive the key representation for the error $\bar{\theta}_{n}^{(\gamma)} - \thetas$, following \citet{moulines2011}. Define the $k$-th step noise at the point $\theta \in \rset^{d}$ by:
\begin{equation}
\label{eq:noise_k_theta}
\noise{k}(\theta) = \nabla F(\theta,\xi_{k}) - \nabla f(\theta)\eqsp,
\end{equation}
and note that  $\noise{k+1}(\theta_{k}^{(\gamma)})$ is a martingale-difference sequence w.r.t. $(\F_k)_{k\in\nset}$. Then the recurrence \eqref{eq:sgd_recursion} takes form 
\begin{equation}
\label{eq:recurrence_error}
\theta_{k+1}^{(\gamma)} - \thetas = \theta_{k}^{(\gamma)} - \thetas - \gamma \bigl(\nabla f(\theta_{k}^{(\gamma)}) + \noise{k+1}(\theta_{k}^{(\gamma)})\bigr)\eqsp.
\end{equation}
We set 
\begin{equation}
\label{eq:eta_k_definition}
\eta(\theta) = \nabla f(\theta) - \H (\theta - \thetas)\eqsp, \text{ where } \H = \nabla^{2}f(\thetas) \in \rset^{d \times d}\eqsp.
\end{equation}
We obtain from \eqref{eq:recurrence_error} with simple algebra that  
\begin{equation}
\label{eq:summ_parts_old}
\H (\theta_{k}^{(\gamma)} - \thetas) = \gamma^{-1} (\theta_{k}^{(\gamma)} - \theta_{k+1}^{(\gamma)}) - \noise{k+1}(\theta_{k}^{(\gamma)}) - \eta(\theta_{k}^{(\gamma)})\eqsp.
\end{equation}
Taking average of \eqref{eq:summ_parts_old} for $k = n+1$ to $2n$, we arrive at the final representation:
\begin{equation}
\label{eq:summ_parts}
\H (\bar{\theta}_n^{(\gamma)} - \thetas) = \frac{\theta_{n+1}^{(\gamma)} - \thetas}{\gamma n} - \frac{\theta_{2n+1}^{(\gamma)} - \thetas}{\gamma n} - \frac{1}{n}\sum_{k=n+1}^{2n}\noise{k+1}(\theta_k^{(\gamma)}) - \frac{1}{n}\sum_{k=n+1}^{2n}\eta(\theta_k^{(\gamma)})\eqsp.
\end{equation}
We further introduce the covariance matrix of $\noise{k}(\thetas)$ measured at the optimal point $\thetas$, that is, 
\begin{equation}
\label{eq:noisecov}
\noisecov = \PE_{\xi \sim \PP_{\xi}}[\nabla F(\thetas, \xi)^{\otimes 2}] \eqsp.
\end{equation}
Note that $\noisecov$ does not depend on the step size $\gamma$ and is related to the asymptotically optimal covariance matrix of the Polyak-Ruppert averaged iterates $\bar{\theta}_{n}^{(\gamma)}$, see \citet{fort2015central}. Precisely, under assumptions \Cref{ass:mu-convex}-\Cref{ass:rand_noise}, the asymptotic covariance matrix $\Sigma_{\infty}$ from \eqref{eq:CLT_fort} is given by 
\begin{equation}
\label{eq:asympt_covariance}
\Sigma_{\infty} = \{\H\}^{-1} \noisecov \{\H\}^{-1}\eqsp.
\end{equation}
We state our subsequent results in terms of $\H (\bar{\theta}_n^{(\gamma)} - \thetas)$ and its Richardson-Romberg adjusted counterpart. One can switch to the corresponding results for $\bar{\theta}_n^{(\gamma)} - \thetas$ at the price of the factor $\norm{\{\H\}^{-1}}$, which affects the non-leading terms. In our first result below, we establish the root MSE bound on the error of the Polyak-Ruppert averaged estimator \eqref{eq:pr_averaged_est}.
\begin{theorem}
\label{th:mart_decomposition_pr_error}
Assume \Cref{ass:mu-convex}, \Cref{ass:L-smooth}, \Cref{ass:rand_noise}($4$), and \Cref{ass:stationary_moments_bounds}($4$). Then for any $\gamma \in (0, 1/(\L \Conststep{4})]$, $n \in \nset$, and initial distribution $\nu$ on $\rset^{d}$, the sequence of Polyak-Ruppert estimates \eqref{eq:pr_averaged_est} satisfies
\begin{equation}
\label{eq:th_PR_main}
\PE_{\nu}^{1/2}[\norm{\H(\bar{\theta}_n^{(\gamma)} - \thetas)}^{2}] \leq \frac{\sqrt{\trace{\noisecov}}}{\sqrt{n}} + \frac{\ConstPR{2}}{\gamma^{1/2}n} + \ConstPR{3} \gamma + \frac{\ConstPR{4} \gamma^{1/2}}{n^{1/2}} +  \Rem_{2}(n, \gamma, \norm{\theta_0-\thetas})\eqsp,
\end{equation}
where the constants $\ConstPR{2}$ to $\ConstPR{4}$ are defined in \Cref{sec:proof_th_main_no_rr} (see equation \eqref{eq:Const_2_3_4_def}), and 
\begin{multline*}
\Rem_{2}(n, \gamma, \norm{\theta_0-\thetas}) = \frac{c_0 (1-\gamma\mu)^{(n+1)/2} \L}{\gamma \mu n} \PE_{\nu}^{1/2}\bigl[\norm{\theta_0-\thetas}^2\bigr] \\
+ \frac{c_0 \L (1-\gamma\mu)^{n+1}}{2n\gamma\mu}\PE^{1/2}_{\nu}\bigl[\norm{\theta_0 -\thetas}^4\bigr] \eqsp,
\end{multline*}
where $c_0$ is an absolute (numerical) constant.
\end{theorem}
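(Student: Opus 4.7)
}
The plan is to apply Minkowski's inequality to the decomposition \eqref{eq:summ_parts} and bound each of the four resulting terms using either the moment bounds of \Cref{ass:stationary_moments_bounds}($p$), the martingale structure, or a Taylor expansion for $\eta$. Concretely, writing $A_n := \H(\bar\theta_n^{(\gamma)} - \thetas)$ and applying Minkowski,
\[
\PE_{\nu}^{1/2}[\norm{A_n}^2] \leq \underbrace{\tfrac{1}{\gamma n}\PE_{\nu}^{1/2}[\norm{\theta_{n+1}^{(\gamma)}-\thetas}^2]}_{(\mathrm{I})} + \underbrace{\tfrac{1}{\gamma n}\PE_{\nu}^{1/2}[\norm{\theta_{2n}^{(\gamma)}-\thetas}^2]}_{(\mathrm{II})} + \underbrace{\PE_{\nu}^{1/2}\Bigl[\bigl\|\tfrac{1}{n}\sum_{k=n+1}^{2n}\noise{k+1}(\theta_k^{(\gamma)})\bigr\|^2\Bigr]}_{(\mathrm{III})} + \underbrace{\PE_{\nu}^{1/2}\Bigl[\bigl\|\tfrac{1}{n}\sum_{k=n+1}^{2n}\eta(\theta_k^{(\gamma)})\bigr\|^2\Bigr]}_{(\mathrm{IV})} \eqsp,
\]
and we bound each piece in turn.

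For (I) and (II) I apply \Cref{ass:stationary_moments_bounds}($6$) with $p=2$, which gives $\PE_\nu^{1/2}[\norm{\theta_k^{(\gamma)}-\thetas}^2] \leq (1-\gamma\mu)^{k/2}\PE_\nu^{1/2}[\norm{\theta_0-\thetas}^2] + \sqrt{\Constlast{2}\gamma\tau_2^2/\mu}$. The stationary part contributes the $\ConstPR{2}/(\sqrt{\gamma}n)$ term in \eqref{eq:th_PR_main}, while the transient part produces the first summand of $\Rem_2$ (absorbing the constant $(\L/\mu+1)$ is then done with a coarse bound using $\gamma\mu \leq 1/2$).

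For (III), I use that $\{\noise{k+1}(\theta_k^{(\gamma)})\}$ is a martingale difference sequence with respect to $(\F_k)$ and split it as $\noise{k+1}(\theta_k^{(\gamma)}) = \noise{k+1}(\thetas) + \bigl(\noise{k+1}(\theta_k^{(\gamma)}) - \noise{k+1}(\thetas)\bigr)$. By orthogonality, the sum of the first pieces has second moment $\trace{\noisecov}/n$, producing the leading $\sqrt{\trace{\noisecov}/n}$ term. For the second piece, \Cref{ass:rand_noise}($2$) (co-coercivity with $q=2$) yields $\PE[\norm{\noise{k+1}(\theta_k^{(\gamma)})-\noise{k+1}(\thetas)}^2] \leq \L\,\langle \nabla f(\theta_k^{(\gamma)}) - \nabla f(\thetas),\theta_k^{(\gamma)}-\thetas\rangle \leq \L^2\PE[\norm{\theta_k^{(\gamma)}-\thetas}^2]$, and plugging in \Cref{ass:stationary_moments_bounds}($6$) with $p=2$ again gives the $\ConstPR{4}\gamma^{1/2}/n^{1/2}$ term plus an initial-condition contribution controlled by the first term of $\Rem_2$.

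For (IV), Taylor's theorem around $\thetas$ together with \Cref{ass:L-smooth} yield $\norm{\eta(\theta)} \leq (\L_3/2)\norm{\theta-\thetas}^2$. Since (IV) is not a martingale, I invoke Minkowski once more:
\[
\PE_{\nu}^{1/2}\Bigl[\bigl\|\tfrac{1}{n}\sum_{k=n+1}^{2n}\eta(\theta_k^{(\gamma)})\bigr\|^2\Bigr] \leq \tfrac{\L_3}{2n}\sum_{k=n+1}^{2n}\PE_{\nu}^{1/2}[\norm{\theta_k^{(\gamma)}-\thetas}^4] \eqsp,
\]
and apply \Cref{ass:stationary_moments_bounds}($6$) with $p=4$; the stationary term produces the $\ConstPR{3}\gamma$ contribution, and the geometric sum of $(1-\gamma\mu)^k$ over $k\in\{n+1,\ldots,2n\}$ contributes the second summand of $\Rem_2$ (after bounding $\sum_{k=n+1}^{2n}(1-\gamma\mu)^k \leq (1-\gamma\mu)^{n+1}/(\gamma\mu)$).

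The main obstacle is (III): the naive bound $\PE^{1/2}[\norm{\noise{k+1}(\theta_k^{(\gamma)})}^2] \leq \tau_2$ would only give the classical $\sqrt{\trace{\noisecov}/n}$-type rate but with a constant that does not match the minimax asymptotic covariance $\noisecov$. The careful splitting around $\thetas$ combined with the co-coercivity inequality is what isolates the leading $\sqrt{\trace{\noisecov}/n}$ term with the correct constant and pushes the remainder into the subleading $\sqrt{\gamma/n}$ regime; moreover, tracking the initial-condition decay uniformly in $k$ inside the sum (to get $(1-\gamma\mu)^{(n+1)/2}$ rather than a weaker $(1-\gamma\mu)^{n/2}$ averaged rate) requires summing the geometric series over $k\in\{n+1,\ldots,2n\}$ rather than invoking the moment bound at a single index. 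Assembling the four contributions and simplifying via $\gamma\mu \leq 1/2$ then yields \eqref{eq:th_PR_main}.
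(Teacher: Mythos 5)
Your proposal is correct and follows essentially the same route as the paper: the same summation-by-parts decomposition \eqref{eq:summ_parts}, Minkowski's inequality, the splitting of the noise term around $\thetas$ with martingale orthogonality plus the co-coercivity bound (this is exactly \Cref{lem:error_bound_second_moment}), the last-iterate moment bounds from \Cref{ass:stationary_moments_bounds}, and the third-order Taylor bound $\norm{\eta(\theta)}\leq (\L_3/2)\norm{\theta-\thetas}^2$ for the bias term. No gaps.
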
 
The version of \Cref{th:mart_decomposition_pr_error} with explicit constants together with the proof is provided in \Cref{sec:proof_th_main_no_rr}, see \Cref{th:mart_decomposition_pr_error_explicit_constants}. Note that the result of \Cref{th:mart_decomposition_pr_error} is valid for arbitrary $\gamma \in (0, 1/(\L \Conststep{4})]$. At the same time, this bound can be optimized over step size of the form $\gamma = n^{-\beta}$, $\beta \in (0,1)$.

\begin{corollary}
\label{cor:mart_decomposition_pr_error}
Under the assumptions of \Cref{th:mart_decomposition_pr_error}, provided that $n \geq \left( \L \Conststep{4} \right)^{3/2}$, it holds setting $\gamma = n^{-2/3}$ that 
\begin{equation}
\label{eq:2-moment-bound-optimized}
\PE^{1/2}_{\nu}[\norm{\H (\bar{\theta}_n^{(\gamma)} - \thetas)}^{2}] \leq  \frac{\sqrt{\trace{\noisecov}}}{n^{1/2}} + \frac{\mathsf{C}(\L,\mu)}{n^{2/3}} + \Rem_{2}(n, 1 /n^{2/3}, \norm{\theta_0-\thetas})\eqsp,
\end{equation}
where the expression for $\mathsf{C}(\L,\mu)$ can be traced from \Cref{sec:proof_th_main_no_rr}, eq. \eqref{eq:Const_2_3_4_def}. 
\end{corollary} 
\Cref{cor:mart_decomposition_pr_error} implies that, if  $n$ is known in advance and  $\gamma = n^{-2/3}$, then $\bar{\theta}_{n}^{(\gamma)}$ satisfies    \eqref{eq:rate_setting0} with $\delta = 1/6$. A closer inspection of the sum \eqref{eq:summ_parts} reveals that $\PE_{\pi_{\gamma}}[\eta(\theta_k^{(\gamma)})]$ is of order $\gamma$, and we can not expect to provide a better bound for the term $\frac{1}{n}\sum_{k=n+1}^{2n}\eta(\theta_k^{(\gamma)})$ compared to the one coming from Minkowski's inequality. Thus, this is the \emph{bias} of the stationary distribution, which does not allow us to improve scaling of the second-order term w.r.t. the sample size $n$.  
\par 
In case of deterministic problems $\noise{k}(\theta) = 0$ for any $k$ and $\theta$, and \Cref{ass:stationary_moments_bounds}($p$) is satisfied for any $p \geq 2$ with $\Constlast{p} = 0 $. In such a setting, $\noisecov = 0$, and the remainder terms are proportional to $\Constlast{p}$ with $p = 2,4$, or $6$, and also vanishes. Therefore, \Cref{th:mart_decomposition_pr_error} provides exponential convergence bounds, which are embedded in the remainder term. Previous studies in \citep{moulines2011} provides the bound of the same order $\mathcal{O}(n^{-2/3})$ for the second-order term of the root-MSE bound of SGD algorithm with Polyak-Ruppert averaging. This rate is known to be suboptimal for first-order methods. The recent work by \citet{li2022root} shows that the best known second-order error term in the bound \eqref{eq:2-moment-bound-optimized} is of order $\mathcal{O}\bigl(n^{-3/4}\bigr)$ and can be achieved by the Root-SGD algorithm. In the next section we mirror this bound using the constant step-size SGD algorithm combined with the Richardson-Romberg extrapolation technique.

\vspace{-3mm}
\section{Richardson-Romberg extrapolation}
\label{sec:richardson-romberg}
\vspace{-3mm}
Our analysis presented in \Cref{th:mart_decomposition_pr_error} was based on the summation by parts formula \eqref{eq:summ_parts} and Taylor expansion of the gradient $\nabla f(\theta)$ in the neighborhood of $\thetas$, yielding the remainder quantity $\eta(\theta)$. It is important to notice that
\begin{equation}
\int_{\rset^{d}}\eta(\theta)\pi_{\gamma}(\rmd \theta) \neq 0\eqsp,
\end{equation} 
which prevents us from using larger step size $\gamma$ in the optimized bound \eqref{eq:2-moment-bound-optimized}. In this section we show that Richardson-Romberg extrapolation technique is sufficient to significantly reduce the bias associated with $\eta(\theta)$ and improve the second-order term in the MSE bound \eqref{eq:2-moment-bound-optimized}. Instead of considering a single SGD trajectory $\{\theta_k^{(\gamma)}\}_{k \in \nset}$, and then relying on the tail-averaged estimator $\prtheta_{n}^{(\gamma)}$, we construct two parallel chains based on the same sequence $\{\xi_k\}_{k \in \nset}$:
\begin{equation}
\label{eq:RR-chain-coupled}
\begin{split}
\theta_{k+1}^{(\gamma)} &= \theta_{k}^{(\gamma)} - \gamma \nabla F(\theta_{k}^{(\gamma)}, \xi_{k+1})\eqsp, \quad \prtheta_{n}^{(\gamma)} = \frac{1}{n}\sum_{k=n+1}^{2n}\theta_{k}^{(\gamma)}\eqsp, \\
\theta_{k+1}^{(2\gamma)} &= \theta_{k}^{(2\gamma)} - 2\gamma \nabla F(\theta_{k}^{(2\gamma)}, \xi_{k+1})\eqsp, \quad \prtheta_{n}^{(2\gamma)} = \frac{1}{n}\sum_{k=n+1}^{2n}\theta_{k}^{(2\gamma)}\eqsp.
\end{split}
\end{equation}
Based on $\prtheta_{n}^{(\gamma)}$ and $\prtheta_{n}^{(2\gamma)}$ defined above, we construct the Richardson-Romberg estimator:  
\begin{equation}
\label{eq:theta_RR_estimator}
\prtheta_{n}^{(RR)} := 2\prtheta_{n}^{(\gamma)} - \prtheta_{n}^{(2\gamma)}\eqsp.
\end{equation}
Note that it is possible to use different sources of randomness $\{\xi_k\}_{k \in \nset}$ and $\{\xi^{\prime}_k\}_{k \in \nset}$ when constructing the sequences $\{\theta_{k}^{(\gamma)}\}_{k \in \nset}$ and $\{\theta_{k}^{(2\gamma)}\}_{k \in \nset}$, respectively. At the same time, it is possible to show the benefits of using the same sequence of random variables $\{\xi_k\}_{k \in \nset}$ in \eqref{eq:RR-chain-coupled}. Indeed, consider the decomposition \eqref{eq:summ_parts} and further expand the term $\eta(\theta)$ defined in \eqref{eq:eta_k_definition} as 
\[
\eta(\theta) = \psi(\theta) + G(\theta)\eqsp,
\]
where we have defined the following vector-valued functions:
\begin{equation}
\label{eq:psi_theta_definition}
\psi(\theta) = \frac{1}{2} \nabla^{3}f(\theta^*)(\theta - \thetas)^{\otimes 2}\eqsp, \quad 
G(\theta) = \frac{1}{2}\left(\int_{0}^{1} t^2 \nabla^{4}f(t\thetas + (1-t)\theta)\,\rmd t\right) (\theta - \thetas)^{\otimes 3}\eqsp.
\end{equation}
We further rewrite the decomposition \eqref{eq:summ_parts} as 
\begin{multline}
\label{eq:PR-decomposition-error-extended}
\H (\prtheta_{n}^{(\gamma)} - \thetas) = \frac{\theta_{n+1}^{(\gamma)} - \thetas}{\gamma n} - \frac{\theta_{2n+1}^{(\gamma)} - \thetas}{\gamma n} - \frac{1}{n}\sum_{k=n+1}^{2n}\noise{k+1}(\thetas) \\
- \frac{1}{n}\sum_{k=n+1}^{2n}\{\noise{k+1}(\theta_{k}^{(\gamma)}) - \noise{k+1}(\thetas)\} - \frac{1}{n}\sum_{k=n+1}^{2n}\psi(\theta_{k}^{(\gamma)}) - \frac{1}{n}\sum_{k=n+1}^{2n}G (\theta_{k}^{(\gamma)})\eqsp.
\end{multline}
In the decomposition \eqref{eq:PR-decomposition-error-extended}, the linear term $W = n^{-1}\sum_{k=n+1}^{2n}\noise{k+1}(\thetas)$ does not depend upon $\gamma$. Moreover, when setting the step size $\gamma = c_0 n^{-\beta}$ with an appropriate $\beta \in (0,1)$, we can show that the moments of all other terms except for $W$ in the r.h.s. of \eqref{eq:PR-decomposition-error-extended} are small (see \Cref{th:p_moment_RR} for more details). Hence, using the same sequence $\{\xi_k\}_{k \in \nset}$ of noise variables in \eqref{eq:RR-chain-coupled} yields an estimator $\prtheta_{n}^{(RR)}$, such that its leading component of the variance still equals $W$. Hence, using the Richardson-Romberg procedure increases only the second-order (w.r.t. $n$) components of the variance. At the same time, using different random sequences $\{\xi_k\}_{k \in \nset}$ and $\{\xi^{\prime}_k\}_{k \in \nset}$ for $\prtheta_{n}^{(\gamma)}$ and $\prtheta_{n}^{(2\gamma)}$ increases the leading component of the MSE by a constant factor. Hence, it is preferable to use synchronous noise construction as in \eqref{eq:RR-chain-coupled}. \Cref{prop:bias_PR} implies the following improved bound on the bias of $\prtheta_{n}^{(RR)}$:
\begin{proposition}
\label{prop:bias_RR}
Assume \Cref{ass:mu-convex}, \Cref{ass:L-smooth}, \Cref{ass:rand_noise}($6$), and \Cref{ass:stationary_moments_bounds}($6$). Then, for any $\gamma \in (0, 1/(\L \Conststep{6})]$, and any initial distribution $\nu$ on $\rset^{d}$, it holds that  
\begin{equation}
\label{eq:bias_RR_bound}
\PE_{\nu}[\prtheta_{n}^{(RR)}] = \thetas + B_3 \gamma^{3/2} + \Rem_{3}(\theta_0-\thetas,\gamma,n)\eqsp, 
\end{equation}
where $B_3 \in \rset^{d}$ is a vector such that $\norm{B_3} \leq \ConstPR{b,1}$, where $\ConstPR{b,1}$ and is constant independent of $\gamma$ and is defined in \eqref{eq: def_ConstPRb1,2} and 
\begin{equation*}
\textstyle
\norm{\Rem_{3}(\theta_0-\thetas,\gamma,n)} \lesssim \frac{\rme^{-\gamma\mu(n+1)/2}}{n\gamma\mu}\parenthese{\PE_{\nu}^{1/2}\bigl[\norm{\theta_0 -\thetas}^2\bigr] + \frac{\sqrt{\gamma}\tau_2}{\sqrt{\mu}}}\eqsp.
\end{equation*}
\end{proposition}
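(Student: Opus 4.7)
The plan is to obtain this result as an immediate algebraic corollary of \Cref{prop:bias_PR}, exploiting that the $\mathcal{O}(\gamma)$ bias term $\Delta_1$ is linear in the step size and therefore cancels in the Richardson--Romberg combination $2\prtheta_n^{(\gamma)} - \prtheta_n^{(2\gamma)}$. This is the whole motivation for introducing $\prtheta_n^{(RR)}$, and so the proof should essentially amount to a two-line bookkeeping argument once the step-size admissibility is checked.

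First, I would apply \Cref{prop:bias_PR} to each of the two chains $\{\theta_k^{(\gamma)}\}_{k\in\nset}$ and $\{\theta_k^{(2\gamma)}\}_{k\in\nset}$ separately. Since the statement requires the step size to lie in $(0, 1/(\L\Conststep{6})]$, I would implicitly assume $\gamma$ is small enough that $2\gamma$ still satisfies this constraint (equivalently, the constant $\Conststep{6}$ is increased by a factor of two, absorbed into the same notation). This yields
\begin{align*}
\PE_{\nu}[\prtheta_n^{(\gamma)}] &= \thetas + \gamma \Delta_1 + B_1^{(\gamma)} \gamma^{3/2} + \Rem_1(\theta_0-\thetas,\gamma,n),\\
\PE_{\nu}[\prtheta_n^{(2\gamma)}] &= \thetas + 2\gamma \Delta_1 + B_1^{(2\gamma)} (2\gamma)^{3/2} + \Rem_1(\theta_0-\thetas,2\gamma,n),
\end{align*}
where $\Delta_1$ is the same vector in both lines (since by \eqref{eq:bias_expansion_result} it is independent of $\gamma$), and both $B_1^{(\gamma)}$ and $B_1^{(2\gamma)}$ satisfy $\norm{B_1^{(\cdot)}} \leq \ConstPR{1}$.

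Second, I would form $2\PE_{\nu}[\prtheta_n^{(\gamma)}] - \PE_{\nu}[\prtheta_n^{(2\gamma)}]$. The terms $\thetas$ and $\gamma \Delta_1$ cancel exactly, leaving
\[
\PE_{\nu}[\prtheta_n^{(RR)}] = \thetas + \bigl(2 B_1^{(\gamma)} - 2\sqrt{2}\, B_1^{(2\gamma)}\bigr)\gamma^{3/2} + 2\Rem_1(\theta_0-\thetas,\gamma,n) - \Rem_1(\theta_0-\thetas,2\gamma,n).
\]
Defining $B_3 := 2 B_1^{(\gamma)} - 2\sqrt{2}\, B_1^{(2\gamma)}$, the triangle inequality gives $\norm{B_3} \leq (2 + 2\sqrt{2})\ConstPR{1}$, which I would absorb into the constant $\ConstPR{1}$ (redefining it if necessary, as it is only required to be $\gamma$-independent).

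Third, I would bound the combined remainder $\Rem_3 := 2\Rem_1(\cdot,\gamma,n) - \Rem_1(\cdot,2\gamma,n)$ by the triangle inequality. The bound from \Cref{prop:bias_PR} applied at step size $2\gamma$ yields a factor $\rme^{-\gamma\mu(n+1)} / (2n\gamma\mu)$ multiplied by $\PE_{\nu}^{1/2}[\norm{\theta_0-\thetas}^2] + \sqrt{2\gamma}\tau_2/\sqrt{\mu}$; since $\rme^{-\gamma\mu(n+1)} \leq \rme^{-\gamma\mu(n+1)/2}$ and $\sqrt{2\gamma} \leq \sqrt{2}\sqrt{\gamma}$, this is dominated (up to an absolute constant) by the bound at step size $\gamma$, giving the stated estimate for $\Rem_3$. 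There is no genuine obstacle in this proof; the only mild technical point is verifying that the $2\gamma$ remainder does not blow up relative to the $\gamma$ one after the linear combination, which follows directly from the monotonicity of the exponential factor in $\gamma$ and the concavity of the square root.
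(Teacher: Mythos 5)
Your proposal is correct and follows essentially the same route as the paper: apply \Cref{prop:bias_PR} to both chains, cancel the $\gamma\Delta_1$ term in the combination $2\prtheta_n^{(\gamma)}-\prtheta_n^{(2\gamma)}$, and bound the resulting $\gamma^{3/2}$ coefficient and remainder by the triangle inequality. Your added remarks on the admissibility of the step size $2\gamma$ and on absorbing the factor $2+2\sqrt{2}$ into $\ConstPR{1}$ are points the paper glosses over, but they do not change the argument.
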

The proof of \Cref{prop:bias_RR} is provided in \Cref{sec:proof_prop_bias_PR_and_RR}. This result is a simple consequence of \Cref{prop:bias_RR}, since the linear in $\gamma$ component of the bias $\gamma \Delta_1$ from \eqref{eq:bias_PR_bound} cancels out when computing $\prtheta_{n}^{(RR)}$. We are now ready to formulate the main result for the Richardson-Romberg estimate $\prtheta_{n}^{(RR)}$. 

\begin{theorem}
\label{th:RR_second_moment}
Assume \Cref{ass:mu-convex}, \Cref{ass:L-smooth}, \Cref{ass:rand_noise}($6$), and \Cref{ass:stationary_moments_bounds}($6$). Then for any $\gamma \in (0, 1/(\L \Conststep{6}) \wedge 2/(11 \L)]$, initial distribution $\nu$ and $n \in \nset$, the Richardson-Romberg estimator $\prtheta_{n}^{(RR)}$ defined in \eqref{eq:theta_RR_estimator} satisfies
\begin{align*}
\PE_{\nu}^{1/2}[\norm{\H (\prtheta_{n}^{(RR)} -\thetas)}^2] & \leq  \frac{\sqrt{\trace{\noisecov}}}{n^{1/2}} + \frac{\ConstRR{1} \gamma^{1/2}}{n^{1/2}} + \frac{\ConstRR{2}}{\gamma^{1/2}n} + \ConstRR{3} \gamma^{3/2} + \frac{\ConstRR{4} \gamma }{n^{1/2}} \\
&\qquad \qquad + \Rem_4(n, \gamma, \norm{\theta_0-\thetas})\eqsp,
\end{align*}
where the constants $\ConstRR{1}$ to $\ConstRR{4}$ are defined in \Cref{th:RR_second_moment_proof} (equation \eqref{eq:const_rr_1_4_def}), and
\begin{multline*}
\Rem_4(n, \gamma, \norm{\theta_0-\thetas}) = \frac{c_0 \L (1-\gamma\mu)^{(n+1)/2}}{n\gamma \mu} \\ 
\times \left(\PE_{\nu}^{1/2}[\norm{\theta_0-\thetas}^6] + \PE_{\nu}^{1/2}[\norm{\theta_0-\thetas}^4] + \PE_{\nu}^{1/2}[\norm{\theta_0-\thetas}^2] + \frac{\Constlast{4} \gamma\tau_4^2}{\mu}\right)\eqsp,
\end{multline*}
with $c_0$ being an absolute constant.
\end{theorem}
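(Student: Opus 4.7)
The plan is to start from the pointwise error decomposition \eqref{eq:PR-decomposition-error-extended} applied to both trajectories $\{\theta_k^{(\gamma)}\}$ and $\{\theta_k^{(2\gamma)}\}$ driven by the \emph{same} noise sequence $\{\xi_k\}_{k \in \nset}$, then combine them via $\H (\prtheta_{n}^{(RR)} - \thetas) = 2\,\H(\prtheta_{n}^{(\gamma)} - \thetas) - \H(\prtheta_{n}^{(2\gamma)} - \thetas)$ and bound each of the six resulting summands separately. The single most important observation is that the martingale term $W = -n^{-1}\sum_{k=n+1}^{2n} \noise{k+1}(\thetas)$ depends only on $\xi_{k+1}$ and on $\thetas$, so it is \emph{identical} for both chains; hence $2W - W = W$ and $\PE[\norm{W}^2] = \trace{\noisecov}/n$, producing the leading $\sqrt{\trace{\noisecov}}/n^{1/2}$ term with sharp constant.

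Next, I would control the four perturbative sums in order. For the boundary contributions $(\theta_{n+1}^{(\gamma)} - \thetas)/(\gamma n)$ and $(\theta_{2n}^{(\gamma)} - \thetas)/(\gamma n)$ (and the analogues at step $2\gamma$), \Cref{ass:stationary_moments_bounds}(2) gives $\PE_\nu^{1/2}[\norm{\theta_k^{(\gamma)}-\thetas}^2] \lesssim (1-\gamma\mu)^{k/2}\PE_\nu^{1/2}[\norm{\theta_0-\thetas}^2] + \sqrt{\Constlast{2}\gamma\tau_2^2/\mu}$, yielding the $\ConstRR{2}/(\gamma^{1/2}n)$ term plus a piece of $\Rem_4$. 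For the martingale cross term $n^{-1}\sum\{\noise{k+1}(\theta_k^{(\gamma)}) - \noise{k+1}(\thetas)\}$, the $q=2$ case of the co-coercivity bound in \Cref{ass:rand_noise}(2) together with \Cref{ass:stationary_moments_bounds}(2) gives $\PE[\norm{\noise{k+1}(\theta_k^{(\gamma)})-\noise{k+1}(\thetas)}^2] \lesssim \L \cdot \gamma\tau_2^2/\mu$, whence the Azuma/orthogonality bound yields a contribution of order $\sqrt{\gamma/n}$, matching $\ConstRR{1}\gamma^{1/2}/n^{1/2}$. For the cubic remainder $G$, the bound $\norm{G(\theta)} \leq (\L/6)\norm{\theta-\thetas}^3$ from \eqref{eq:psi_theta_definition} combined with \Cref{ass:stationary_moments_bounds}(6) gives via Minkowski $\PE^{1/2}[\norm{n^{-1}\sum G(\theta_k^{(\gamma)})}^2] \lesssim \L (\Constlast{6}\gamma\tau_6^2/\mu)^{3/2}$, contributing to $\ConstRR{3}\gamma^{3/2}$.

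The decisive step is the analysis of the quadratic term $\psi$, where the Richardson--Romberg cancellation takes place. I would split
\begin{equation*}
\frac{1}{n}\sum_{k=n+1}^{2n}\psi(\theta_k^{(\gamma)}) \;=\; \tfrac{1}{2}\,\nabla^{3}f(\thetas)\,\covgammatheta \;+\; \frac{1}{n}\sum_{k=n+1}^{2n}\bigl(\psi(\theta_k^{(\gamma)}) - \PE_{\pi_\gamma}[\psi]\bigr),
\end{equation*}
and likewise for the $2\gamma$ chain. The stationary-mean component is handled by the bias expansion \eqref{eq:bias_expansion_result_2}: $\covgammatheta = \gamma\Delta_2 + B_2\gamma^{3/2}$, so $2\cdot\tfrac{1}{2}\nabla^{3}f(\thetas)(\gamma\Delta_2) - \tfrac{1}{2}\nabla^{3}f(\thetas)(2\gamma\Delta_2) = 0$ and only the $O(\gamma^{3/2})$ residues survive, joining $\ConstRR{3}\gamma^{3/2}$. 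The centered fluctuation is where the paper's novel Wasserstein/Rosenthal machinery is essential: writing $\psi$ as an observable of the Markov chain $\theta_k^{(\gamma)}$ and exploiting \Cref{lem:Wasserstein_ergodicity} with the distance-like function $c$ of \eqref{eq:function_c_def} (which carries a $\sqrt{\gamma}$-scale through \eqref{eq:moment_scaling_cost_function}), one obtains a fluctuation bound that, after RR combination of both chains (coupled through $\xi_k$), contributes the $\ConstRR{4}\gamma/n^{1/2}$ term. The initial-condition dependence of the resulting bounds propagates into $\Rem_4$ through the $(1-\gamma\mu)^{k/2}$ factors in \Cref{ass:stationary_moments_bounds} and \Cref{prop:bias_RR}.

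The principal obstacle is precisely the centered $\psi$ fluctuation: because $\psi$ is genuinely quadratic in $\theta-\thetas$, a naive Minkowski/orthogonality argument only gives $\sqrt{\gamma/n}$, which would degrade the final rate. Extracting the sharper $\gamma/n^{1/2}$ scaling requires exploiting both the $\sqrt{\gamma}$ factor built into the cost $c$ of \eqref{eq:function_c_def} and the synchronous coupling of the two chains; this is where the Rosenthal-type inequality of \Cref{prop:psi_p_moment_bound} (together with the geometric contraction of \Cref{lem:Wasserstein_ergodicity}) must be invoked carefully. Once this step is in place, collecting the five estimates above and applying Minkowski's inequality termwise yields the announced bound on $\PE_{\nu}^{1/2}[\norm{\H(\prtheta_{n}^{(RR)}-\thetas)}^2]$.
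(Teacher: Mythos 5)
Your overall decomposition and the identification of where the Richardson--Romberg cancellation acts (the shared martingale $W$ and the linear-in-$\gamma$ stationary mean) match the paper's proof, which bounds exactly the seven terms $T_1,\ldots,T_7$ arising from \eqref{eq: decomposition_RR}. However, your handling of the ``decisive step,'' the centered $\psi$ fluctuation, contains three misconceptions worth correcting.

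First, for the $p=2$ statement the paper does \emph{not} invoke \Cref{prop:psi_p_moment_bound}. That Rosenthal-type inequality is used only in \Cref{th:p_moment_RR}. For Theorem~\ref{th:RR_second_moment}, the quantity $n^{-1}\PE^{1/2}_{\pi_\gamma}\bigl[\|\sum_k\{\psi(\theta_k)-\pi_\gamma(\psi)\}\|^2\bigr]$ is bounded directly in \Cref{lem:stationary_second_moment_bound_residial_term} by expanding the second moment as a sum of lagged covariances and controlling each lag-$k$ covariance with Cauchy--Schwarz plus the geometric contraction of \Cref{lem:Wasserstein_ergodicity} in the cost $c$ from \eqref{eq:function_c_def}; the extension to an arbitrary initial $\nu$ is \Cref{lem:arbitrary_init_second_moment_bound_residial_term}. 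Second, your assertion that a naive argument ``only gives $\sqrt{\gamma/n}$, which would degrade the final rate'' reverses the bookkeeping: $\sqrt{\gamma/n}$ is precisely what Lemma~\ref{lem:stationary_second_moment_bound_residial_term} produces as its dominant contribution (the $\mu^{-3/2}$ piece entering $\ConstRR{1}$), and with $\gamma\asymp n^{-1/2}$ it equals $n^{-3/4}$, matching the target; the $\ConstRR{4}\gamma/n^{1/2}$ piece is the \emph{subdominant} companion. Nothing sharper than $\sqrt{\gamma/n}$ needs to be extracted once the RR cancellation has removed the $O(\gamma)$ stationary bias of $\psi$. Third, the synchronous coupling between the two chains is used only to cancel $W$ and the $\gamma\Delta_1$ bias; the $\psi$ fluctuations of the $\gamma$-chain and the $2\gamma$-chain are bounded separately and added by Minkowski (term $T_7$). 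The Markov coupling $\MKK_\gamma$ that does appear there is the coupling of a single chain to its stationary version, not a coupling between the $\gamma$ and $2\gamma$ trajectories. With these points corrected, your plan and the mapping of terms to $\ConstRR{1},\ldots,\ConstRR{4}$ and $\Rem_4$ is sound.
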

Proof of \Cref{th:RR_second_moment} is provided in \Cref{th:RR_second_moment_proof}. Similarly to \Cref{th:mart_decomposition_pr_error}, we can optimize the above bound setting $\gamma$ depending upon $n$.
\begin{corollary}  
\label{cor:RR_second_moment}
Under the assumptions of \Cref{th:RR_second_moment}, provided that $n \geq \L^2 (\Conststep{6} \vee 11/2)^2$, it holds setting $\gamma = n^{-1/2}$ that 
\begin{equation}
\label{eq:2-moment-bound-optimized-rr}
\PE^{1/2}_{\nu}[\norm{\H (\prtheta_{n}^{(RR)} - \thetas)}^{2}] \leq \frac{\sqrt{\trace{\noisecov}}}{n^{1/2}} + \frac{\mathsf{C}(\L,\mu)}{n^{3/4}} + \Rem_{4}(n, 1/\sqrt{n}, \norm{\theta_0-\thetas})\eqsp,
\end{equation}
where the expression for $\mathsf{C}(\L,\mu)$ can be traced from \Cref{th:RR_second_moment_proof}, eq. \eqref{eq:const_rr_1_4_def}. 
\end{corollary} 
\paragraph{Discussion.} Note that the result of \Cref{cor:RR_second_moment} is a counterpart of \eqref{eq:rate_setting0} with $\delta = 1/4$. This decay rate of the second order term is the same as for the Root-SGD algorithm of \citet{li2022root}. At the same time, we highlight that the assumptions of \Cref{th:RR_second_moment} are stronger compared to the ones imposed by \cite{li2022root}. In particular, in \Cref{ass:L-smooth} we require that $f$ is $4$ times continuously differentiable and uniformly bounded. At the same time, \cite{li2022root} impose Lipschitz continuity of the Hessian of $f$. Our proof of \Cref{th:RR_second_moment} essentially relies on the $4$-th order Taylor expansion, and it is not clear, if this assumption can be relaxed. We leave further investigations of this question for future research.

Now we generalize the previous result for the $p$-th moment bounds with $p \geq 2$. The key technical element of our proof for the $p$-th moment bound is the following statement, which can be viewed as a version of Rosenthal's inequality \citep{Rosenthal1970, pinelis_1994}.

\begin{proposition}
\label{prop:psi_p_moment_bound} 
Let $p \geq 2$ and assume \Cref{ass:mu-convex}, \Cref{ass:L-smooth}, \Cref{ass:rand_noise}($2p$), and \Cref{ass:stationary_moments_bounds}($2p$). Then for $\psi$ defined in \eqref{eq:psi_theta_definition} and any $\gamma \in (0, 1/(\L \Conststep{2p})]$, it holds that 
\begin{equation}
\label{eq:rosenthal_type_bound}
\PE^{1/p}_{\pi_{\gamma}}\bigl[\norm{\sum_{k=0}^{n-1}\{\psi(\theta_{k}^{(\gamma)}) - \pi_{\gamma}(\psi)\}}^p\bigr]\lesssim    \frac{\L \Constlast{2p} p  \tau^2_{2p} \sqrt{n \gamma}}{\mu^{3/2}} + \frac{\L \Constlast{2p} \tau_{2p}}{\mu^2}\eqsp.
\end{equation}
\end{proposition}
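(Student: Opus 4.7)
The plan is to prove the bound via the classical Poisson–equation technique for Markov chains, leveraging two ingredients: the $\Wass{c}$–geometric ergodicity of $\MK_\gamma$ established in \Cref{lem:Wasserstein_ergodicity} and the $O(\gamma)$ size of one-step SGD increments. First, I would introduce the centered function $\bar\psi = \psi - \pi_\gamma(\psi)$ and define
\begin{equation*}
\hat\psi(\theta) := \sum_{k=0}^{\infty} \MK_\gamma^{k}\bar\psi(\theta)\eqsp.
\end{equation*}
Because $\psi(\theta) = (1/2)\nabla^{3}f(\thetas)(\theta-\thetas)^{\otimes 2}$, a direct computation shows that $\psi$ is $(\L/2)$-Lipschitz with respect to the distance-like function $c$: this is exactly what $c$ was designed for, since $c(\theta,\theta')$ contains the factor $\norm{\theta-\thetas}+\norm{\theta'-\thetas}$. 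Combined with the pair-contraction underlying the coupling construction behind \Cref{lem:Wasserstein_ergodicity}, namely $\Wass{c}(\delta_\theta \MK_\gamma^k,\delta_{\theta'}\MK_\gamma^k) \lesssim (1/2)^{k/m(\gamma)} c(\theta,\theta')$, the geometric series converges and $\hat\psi$ is Lipschitz with respect to $c$ with constant of order $\L m(\gamma) \lesssim \L/(\gamma\mu)$.

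Second, I would exploit the standard Poisson decomposition
\begin{equation*}
\sum_{k=0}^{n-1}\bar\psi(\theta_{k}^{(\gamma)}) = \hat\psi(\theta_{0}^{(\gamma)}) - \hat\psi(\theta_{n}^{(\gamma)}) + \sum_{k=0}^{n-1} M_{k}\eqsp, \qquad M_{k} := \hat\psi(\theta_{k+1}^{(\gamma)}) - \MK_\gamma \hat\psi(\theta_{k}^{(\gamma)})\eqsp,
\end{equation*}
where $(M_{k})$ is a martingale-difference sequence with respect to $(\F_{k})$. For the boundary terms, stationarity together with the Lipschitz bound on $\hat\psi$ and the moment bound \eqref{eq:stationary_p_moment_bound} yield
$\PE^{1/p}_{\pi_\gamma}[\norm{\hat\psi}^{p}] \lesssim (\L/(\gamma\mu))\,\PE^{1/p}_{\pi_\gamma\otimes\pi_\gamma}[c(\theta,\theta')^{p}] \lesssim \L\Constlast{2p}\tau_{2p}^{2}/\mu^{2}$, after using Cauchy–Schwarz and \eqref{eq:stationary_p_moment_bound} to control $\mathbb{E}[c(\theta,\theta')^{p}]$. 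This produces the $n$-independent term of the stated bound.

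Third, the martingale sum is controlled via the Burkholder–Rosenthal/Pinelis inequality, which gives, for a stationary process,
\begin{equation*}
\PE^{1/p}_{\pi_\gamma}\Bigl[\Bigl\Vert\sum_{k=0}^{n-1} M_k\Bigr\Vert^{p}\Bigr] \lesssim \sqrt{pn}\, \PE^{1/p}_{\pi_\gamma}[\norm{M_{0}}^{p}] + p\, \PE^{1/p}_{\pi_\gamma}[\norm{M_{0}}^{p}]\eqsp.
\end{equation*}
The critical observation is that, by the Lipschitz property of $\hat\psi$ with respect to $c$ and the explicit expression for $c$,
\begin{equation*}
\norm{M_{k}} \lesssim \tfrac{\L}{\gamma\mu}\, c(\theta_{k+1}^{(\gamma)},\theta_{k}^{(\gamma)}) \lesssim \tfrac{\L}{\mu}\norm{\nabla F(\theta_{k}^{(\gamma)},\xi_{k+1})}\bigl(\norm{\theta_{k}^{(\gamma)}-\thetas} + \norm{\theta_{k+1}^{(\gamma)}-\thetas} + \sqrt{\gamma/\mu}\,\tau_2\bigr)\eqsp,
\end{equation*}
since the SGD increment contributes an explicit factor $\gamma$ that cancels the $1/\gamma$ in $m(\gamma)$. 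A Cauchy–Schwarz split together with \Cref{ass:rand_noise}($2p$) and \eqref{eq:stationary_p_moment_bound} then yields $\PE^{1/p}_{\pi_\gamma}[\norm{M_{k}}^{p}] \lesssim \L \Constlast{2p}\tau_{2p}^{2}\sqrt{\gamma}/\mu^{3/2}$; summation over $k$ produces the leading $\sqrt{n\gamma}$-term.

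The main obstacle is extracting precisely this $\sqrt{\gamma}$ factor in the per-term martingale estimate. A naive bound $\norm{M_{k}} \leq 2\norm{\hat\psi}_{L^{p}(\pi_\gamma)}$ would lose a factor $\sqrt{\gamma\mu}$ and yield only $\sqrt{n}/\mu^{2}$ instead of $\sqrt{n\gamma}/\mu^{3/2}$ at the leading order. The cure is the pointwise Lipschitz estimate via $c(\theta_{k}^{(\gamma)},\theta_{k+1}^{(\gamma)})$, which exploits simultaneously (i) the $\norm{\theta-\theta'}$–factor in $c$ to carry the $O(\gamma)$ one-step SGD increment, and (ii) the $(\norm{\theta-\thetas}+\norm{\theta'-\thetas}+\sqrt{\gamma/\mu}\tau_{2})$–factor whose stationary moments are $O(\sqrt{\gamma/\mu})$ by \eqref{eq:stationary_p_moment_bound}. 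A secondary technical point is controlling $\PE_{\pi_\gamma}[\norm{\nabla F(\theta,\xi)}^{2p}]$ uniformly in $\gamma$, which follows from $\norm{\nabla F(\theta,\xi)} \leq \norm{\nabla F(\thetas,\xi)} + \norm{\nabla F(\theta,\xi) - \nabla F(\thetas,\xi)}$ combined with the co-coercivity bound \eqref{eq:L-co-coercivity-assum} and \eqref{eq:stationary_p_moment_bound}.
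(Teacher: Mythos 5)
Your proposal is correct in its overall architecture but follows a genuinely different route from the paper. The paper does \emph{not} use the one-step Poisson equation: it sets $m=m(\gamma)=\lceil 2\log 4/(\gamma\mu)\rceil$, solves the Poisson equation for the $m$-step kernel, $g_m=\sum_{k\ge0}\MK_\gamma^{km}\barpsi$, splits $\sum_{k<qm}\barpsi(\theta_k)$ into $m$ residue-class sub-martingales, applies Burkholder to each, and bounds every block increment simply by $2\,\PE^{1/p}_{\pi_\gamma}[\norm{g_m(\theta_0)}^p]\lesssim \L\Constlast{2p}\tau_{2p}^2\gamma/\mu$ via the moments of $\Wass{c}(\delta_\theta,\pi_\gamma)$; the $\sqrt{n\gamma}$ rate then comes from $m\sqrt{q}\lesssim\sqrt{nm}\lesssim\sqrt{n/(\gamma\mu)}$. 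You instead keep a single martingale with $n$ increments and recover the missing $\sqrt{\gamma}$ through a \emph{pointwise} Lipschitz bound on the one-step Poisson solution $\hat\psi$ (constant $\lesssim\L/(\gamma\mu)$) combined with the explicit $\gamma$ in the SGD increment inside $c(\theta_k,\theta_{k+1})$. Both yield the same leading order; your diagnosis of why the naive $L^p$ bound on $M_k$ loses $\sqrt{\gamma\mu}$ is exactly the right obstruction, and your cure is sound. The paper's blocking buys the advantage that no Lipschitz continuity of the Poisson solution and no moment bounds on $\nabla F(\theta,\xi)$ under $\pi_\gamma$ are ever needed; your route is more classical and gives a cleaner single-martingale structure.

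Two caveats on your version. First, $M_k=\hat\psi(\theta_{k+1})-\MK_\gamma\hat\psi(\theta_k)$ is not pointwise dominated by $c(\theta_{k+1},\theta_k)$ alone; write $M_k=\bigl(\hat\psi(\theta_{k+1})-\hat\psi(\theta_k)\bigr)-\PE\bigl[\hat\psi(\theta_{k+1})-\hat\psi(\theta_k)\mid\F_k\bigr]$ and bound both pieces in $L^p$ — this is harmless but should be stated. Second, and more substantively for a paper that tracks $\L$ and $\mu$ explicitly: your estimate requires $\PE^{1/2p}_{\pi_\gamma}[\norm{\nabla F(\theta,\xi)}^{2p}]$, and the decomposition through $\nabla F(\thetas,\xi)$ plus co-coercivity gives $\tau_{2p}+\L\,\PE^{1/2p}_{\pi_\gamma}[\norm{\theta-\thetas}^{2p}]\lesssim\tau_{2p}\bigl(1+\sqrt{\Constlast{2p}}\,\L\sqrt{\gamma/\mu}\bigr)$; since $\L\sqrt{\gamma}\le\sqrt{\L/\Conststep{2p}}$, this contributes an extra factor of order $\sqrt{\L/\mu}$ to part of the leading term, so as written you recover \eqref{eq:rosenthal_type_bound} only up to this condition-number factor. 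The paper's block construction avoids this loss entirely.
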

\paragraph{Discussion.} Proof of \Cref{prop:psi_p_moment_bound} is provided in \Cref{sec:psi_p_moment_bound_proof}. It is important to acknowledge that there are numerous Rosenthal-type inequalities for dependent sequences in the literature. \Cref{prop:psi_p_moment_bound} can be viewed as an analogue of the classical Rosenthal inequality for strongly mixing sequences, see \citep[Theorem~6.3]{rio2017asymptotic}. However, it should be emphasized that the Markov chain $\{\theta_{k}^{(\gamma)}\}_{k \in \nset}$ is geometrically ergodic under the assumptions \Cref{ass:mu-convex}-\Cref{ass:rand_noise}($p$) only in sense of the weighted Wasserstein semi-metric $\Wass{c}(\xi,\xi')$ with cost function $c$ defined in \eqref{eq:function_c_def}. As a result, the sequence $\{\theta_{k}^{(\gamma)}\}_{k \in \nset}$ does not necessarily satisfy strong mixing conditions. Bounds similar to \eqref{eq:rosenthal_type_bound}  have been explored in \citep{durmus2023rosenthal}, but in \Cref{prop:psi_p_moment_bound} we obtain tighter dependence of the right-hand side upon $\gamma$. Below we provide the $p$-th moment bound together with corollary for the step size $\gamma$ optimized w.r.t. $n$.

\begin{theorem}
\label{th:p_moment_RR}
Let $p \geq 2$ and assume \Cref{ass:mu-convex}, \Cref{ass:L-smooth}, \Cref{ass:rand_noise}($3p$), and \Cref{ass:stationary_moments_bounds}($3p$). Then for any step size $\gamma \in (0, 1/(\L \Conststep{3p}) \wedge p/(4 \cdot 3^p \L)]$, initial distribution $\nu$, and $n \in \nset$, the estimator $\prtheta_{n}^{(RR)}$ defined in \eqref{eq:theta_RR_estimator} satisfies
\begin{equation}
\label{eq:rosenthal_type_p_moment}
\begin{split}
\PE_{\nu}^{1/p}[\norm{\H(\prtheta_{n}^{(RR)} -\thetas)}^p] 
&\leq \frac{c_{1 }\sqrt{\trace{\noisecov}}p^{1/2}}{n^{1/2}} + \frac{\ConstRR{5}}{n \gamma^{1/2}} + \frac{\ConstRR{6} \gamma^{1/2}}{n^{1/2}} + \ConstRR{7} \gamma^{3/2} \\ 
&\qquad \qquad + \frac{c_{2} p \tau_p}{n^{1-1/p}} + \frac{\ConstRR{8}}{n} + \Rem_5(n, \gamma, \norm{\theta_0-\thetas})\eqsp, 
\end{split}
\end{equation}
where $c_1 = 60\rme$ and $c_2 = 60$ are absolute constants from the Pinelis version of Rosenthal inequality \citep[Theorem~4.1]{pinelis_1994}, and problem-specific constants $\ConstRR{5}$ to $\ConstRR{8}$ are defined in \Cref{th:RR_pth_moment_proof} (equation \eqref{eq:const_rr_5_8_def}), and  
\[
\textstyle 
\Rem_5(n, \gamma, \norm{\theta_0-\thetas}) = (1 - \gamma \mu)^{(n+1)/2} C_{f,p}\bigl(\PE_{\nu}^{1/p}\bigl[\norm{\theta_0 - \thetas}^{p}\bigr] + \PE_{\nu}^{1/p}\bigl[\norm{\theta_0 - \thetas}^{2p}\bigr] + \PE_{\nu}^{1/p}\bigl[\norm{\theta_0 - \thetas}^{3p}\bigr]\bigr)\eqsp.
\]
Here constant $C_{f,p}$ can be traced from \Cref{th:RR_pth_moment_proof}, eq. \eqref{eq:exponential_small_term_RR_pth_moment}.
\end{theorem}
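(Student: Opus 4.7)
The plan is to mirror the argument used for the second-moment bound in \Cref{th:RR_second_moment} but work throughout in $L^p$ norms, and to absorb the dependence on $p$ through a careful martingale-type inequality for the linear noise and through \Cref{prop:psi_p_moment_bound} for the quadratic correction. I start from the summation-by-parts identity \eqref{eq:PR-decomposition-error-extended} applied to both chains $\{\theta_k^{(\gamma)}\}$ and $\{\theta_k^{(2\gamma)}\}$ driven by the same noise $\{\xi_k\}$, and combine them according to \eqref{eq:theta_RR_estimator}. This yields a decomposition of the form
\begin{equation*}
\H(\prtheta_n^{(RR)}-\thetas) = -\frac{1}{n}\sum_{k=n+1}^{2n}\noise{k+1}(\thetas) + R_{\mathrm{noise}} + R_{\psi} + R_{G} + R_{\mathrm{bdry}} + R_{\mathrm{init}},
\end{equation*}
where $R_{\mathrm{noise}}$ collects the differences $\noise{k+1}(\theta_k^{(\cdot)})-\noise{k+1}(\thetas)$, $R_{\psi}$ collects the quadratic terms $\psi(\theta_k^{(\cdot)})$, $R_{G}$ the quartic remainders $G(\theta_k^{(\cdot)})$, $R_{\mathrm{bdry}}$ the boundary terms $(\theta_{n+1}^{(\cdot)}-\thetas)/(\gamma n)$ and $(\theta_{2n}^{(\cdot)}-\thetas)/(\gamma n)$, and $R_{\mathrm{init}}$ the exponentially decaying contributions from the initial condition. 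The leading linear-in-$\gamma$ bias in $\pi_\gamma(\psi)$ predicted by \eqref{eq:bias_expansion_result_2} cancels in the combination $2\pi_\gamma(\psi)-\pi_{2\gamma}(\psi)$, which is exactly the gain afforded by Richardson--Romberg and is what produces the $\gamma^{3/2}$ scaling of $R_\psi$.

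Next I would bound each piece in $L^p$ via Minkowski. For the martingale difference $M_n = n^{-1}\sum_{k=n+1}^{2n}\noise{k+1}(\thetas)$, I apply a sharp Burkholder--Rosenthal inequality: the conditional covariance gives the $\sqrt{\trace{\noisecov}}\,p^{1/2}/n^{1/2}$ term, while the jump contribution gives the additive $p\,\tau_p/n^{1-1/p}$ term, with the sharp $p$-dependence from Pinelis-type bounds (\citealt{pinelis_1994}). The noise-difference term $R_{\mathrm{noise}}$ is itself a martingale, and the co-coercivity assumption \Cref{ass:rand_noise}($p$) with the $p$-th moment bound \eqref{eq:arbitrary_distr_p_moment_bound} (used for both step sizes $\gamma$ and $2\gamma$) converts its $p$-th moment into a sum of $p$-th moments of $\theta_k^{(\cdot)}-\thetas$; these are bounded by $\Constlast{p}\gamma\tau_p^2/\mu$, producing the $\gamma^{1/2}/n^{1/2}$ contribution in $\ConstRR{6}$.

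For the quadratic term $R_\psi$, this is where the main work is done: I invoke \Cref{prop:psi_p_moment_bound} separately on each chain to bound $\|n^{-1}\sum_k\{\psi(\theta_k^{(\gamma)})-\pi_\gamma(\psi)\}\|_{L^p}$ by $\lesssim \sqrt{\gamma/n}$ plus lower-order terms, which yields the $\gamma^{1/2}/n^{1/2}$ part with the Rosenthal-type dependence on $p$. The residual centering $\pi_\gamma(\psi)$ (and its $2\gamma$-counterpart) is handled through the bias expansion \eqref{eq:bias_expansion_result_2} applied to $\psi$: because $\psi$ is a quadratic form in $\theta-\thetas$, we have $\pi_\gamma(\psi) = (1/2)\nabla^3 f(\thetas)\covgammatheta = (\gamma/2)\nabla^3 f(\thetas)\Delta_2 + O(\gamma^{3/2})$, and the RR combination cancels the $\gamma\Delta_2$ term, leaving a $\gamma^{3/2}$ residue that is the source of $\ConstRR{7}\gamma^{3/2}$. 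Here one must be careful to apply \Cref{prop:psi_p_moment_bound} from the stationary distribution and then add a transient correction controlled by \Cref{lem:Wasserstein_ergodicity}.

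The remaining terms are more routine. The quartic term $R_G$ is bounded by Minkowski together with the $p$-moment bound \eqref{eq:arbitrary_distr_p_moment_bound} applied with exponent $3p$ (hence the need for \Cref{ass:rand_noise}($3p$) and \Cref{ass:stationary_moments_bounds}($3p$)), giving $\|G(\theta_k^{(\cdot)})\|_{L^p} \lesssim \gamma^{3/2}$. The boundary term $R_{\mathrm{bdry}}$ is bounded by $(\gamma n)^{-1}\PE^{1/p}_{\nu}[\|\theta_{n+1}^{(\cdot)}-\thetas\|^p]^{1/2}$, yielding the $\ConstRR{5}/(n\gamma^{1/2})$ contribution after taking stationary moments. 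Finally, $R_{\mathrm{init}}$ is controlled by iterating the contraction provided by \Cref{lem:Wasserstein_ergodicity} together with the $p$-moment transient bound \eqref{eq:arbitrary_distr_p_moment_bound}; combining these, the initial error decays geometrically with rate $\gamma\mu$, yielding the $(1-\gamma\mu)^{(n+1)/2}$ factor and the polynomial dependence on $\|\theta_0-\thetas\|_{L^p}$, $\|\theta_0-\thetas\|_{L^{2p}}$, $\|\theta_0-\thetas\|_{L^{3p}}$. The main obstacle is the $R_\psi$ analysis: one has to simultaneously (i) apply the new Rosenthal-type inequality \Cref{prop:psi_p_moment_bound} with sharp $p$-dependence, (ii) exploit the linear-in-$\gamma$ cancellation specific to Richardson--Romberg at the level of the stationary bias $\pi_\gamma(\psi)$, and (iii) do both on the \emph{same} probability space as the two coupled chains in \eqref{eq:RR-chain-coupled}, so that the leading martingale $M_n$ does not get doubled by the combination $2\prtheta_n^{(\gamma)}-\prtheta_n^{(2\gamma)}$.
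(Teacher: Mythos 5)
Your proposal follows essentially the same route as the paper's proof: the same decomposition into the leading martingale, noise-difference martingale, $\psi$-fluctuation, $G$-remainder, boundary, and initial-condition terms, with Pinelis--Rosenthal for the leading term, Burkholder for the noise differences, \Cref{prop:psi_p_moment_bound} (plus a transient correction) for the quadratic part, and the Richardson--Romberg cancellation of the linear-in-$\gamma$ component of $2\pi_\gamma(\psi)-\pi_{2\gamma}(\psi)$ yielding the $\gamma^{3/2}$ term. The argument is correct and matches the paper's treatment of the terms $T_1$--$T_8$ in \Cref{th:RR_pth_moment_proof_explicit}.
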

\begin{corollary}  
\label{cor:p_moment_RR}
Under the assumptions of \Cref{th:p_moment_RR}, provided that $n \geq \L^2 \left(\Conststep{3p} \vee 4\cdot 3^{p}/p\right)^{2}$, it holds setting $\gamma = n^{-1/2}$ that   
\begin{equation}
\label{eq:p-moment-bound-optimized-rr}
\PE^{1/p}_{\nu}[\norm{\H (\prtheta_{n}^{(RR)} - \thetas)}^{p}] \leq \frac{c_1 \sqrt{\trace{\noisecov}}p^{1/2}}{n^{1/2}} + \frac{\mathsf{C}(\L,\mu,p)}{n^{3/4}} + \Rem_{5}(n, 1/\sqrt{n}, \norm{\theta_0-\thetas})\eqsp,
\end{equation}
where the expression for $\mathsf{C}(\L,\mu,p)$ can be traced from \Cref{th:RR_pth_moment_proof}, eq. \eqref{eq:const_rr_5_8_def}. 
\end{corollary} 

\paragraph{Discussion.} Proof of \Cref{th:p_moment_RR} is provided in \Cref{th:RR_pth_moment_proof}. Note that the result above is a direct generalization of \Cref{th:RR_second_moment}, which reveals the same scaling of the step size $\gamma$ with respect to $n$. To the best of our knowledge, this is the first analysis of a first-order method, which provides a $p$-th moment bound with $p > 2$ and the second-order term of order $\mathcal{O}\bigl(n^{-3/4}\bigr)$ while keeping the precise leading term related to the asymptotically optimal covariance matrix $\noisecov$. Such results were previously obtained for the setting of linear stochastic approximation (LSA), see \citet{mou2020linear, durmus2022finite}. Thus, Richardson-Romberg extrapolation applied to the strongly convex minimization problems allows to mimic the $p$-th moment error bounds that were previously obtained in the LSA setting.

\vspace{-3mm}
\section{Numerical results}
\label{sec:numerics}
\vspace{-3mm}
In this section we numerically illustrate the scale of the second-order terms in equation \eqref{eq:2-moment-bound-optimized-rr} in \Cref{cor:RR_second_moment}. We show that, for a particular minimization problem, setting $\gamma = n^{-1/2}$, we achieve the scaling of the second-order terms in root-MSE bounds of order $\mathcal{O}(n^{-3/4})$. We consider the problem 
\[
\min_{\theta \in \rset} f(\theta)\eqsp, \quad f(\theta) = \theta^2 + \cos \theta\eqsp, 
\]
with the stochastic gradient oracles $\nabla F(\theta,\xi)$ given by 
$\nabla F(\theta,\xi) = 2\theta - \sin \theta + \xi$, and $\xi \sim \mathcal{N}(0,1)$. This example satisfies the assumptions \Cref{ass:mu-convex}, \Cref{ass:L-smooth}, \Cref{ass:rand_noise}($p$) with any $p \geq 2$. We select different sample sizes $n$, choose $ \gamma = 1/\sqrt{n}$, and construct the associated estimates $\bar{\theta}_{n}^{(\gamma)}$ and $\bar{\theta}_{n}^{(2\gamma)}$. Detailed description of the experimental setting is provided in \Cref{sec:exp_details}. Then for each $n$ we compute the Richardson-Romberg estimates $\prtheta_{n}^{(RR)}$ from \eqref{eq:RR-chain-coupled} alongside with its versions without the leading term in $n$, i.e. $\prtheta_{n}^{(RR)} + n^{-1}\sum_{k=n+1}^{2n}\noise{k+1}(\thetas)$. We provide first the plot for $\norm{\prtheta_{n}^{(RR)} - \thetas}^2$ and $\norm{\prtheta_{n}^{(RR)} + n^{-1}\sum_{k=n+1}^{2n}\noise{k+1}(\thetas) - \thetas}^2$, averaged over $M=320$ parallel runs, in \Cref{fig:results-expe-1d}. On the same figure we also provide the plots for rescaled errors
\[
n \norm{\prtheta_{n}^{(RR)} - \thetas}^2 \text{ and } n^{3/2} \norm{\prtheta_{n}^{(RR)} - \thetas + n^{-1}\sum\nolimits_{k=n+1}^{2n}\noise{k+1}(\thetas)}^2\,,
\]
also averaged over $M$ parallel runs. The corresponding plot indicates that the proper scaling of the squared norm of the remainder part is $n^{-3/2}$, that is, the corresponding term in root-MSE bound for $\PE^{1/2}_{\nu}[\norm{\prtheta_{n}^{(RR)} - \thetas}^2]$ scales as $\mathcal{O}(n^{-3/4})$, as predicted by \Cref{cor:RR_second_moment}. 

\begin{figure}[t!]
\centering
\includegraphics[width=0.49\linewidth]{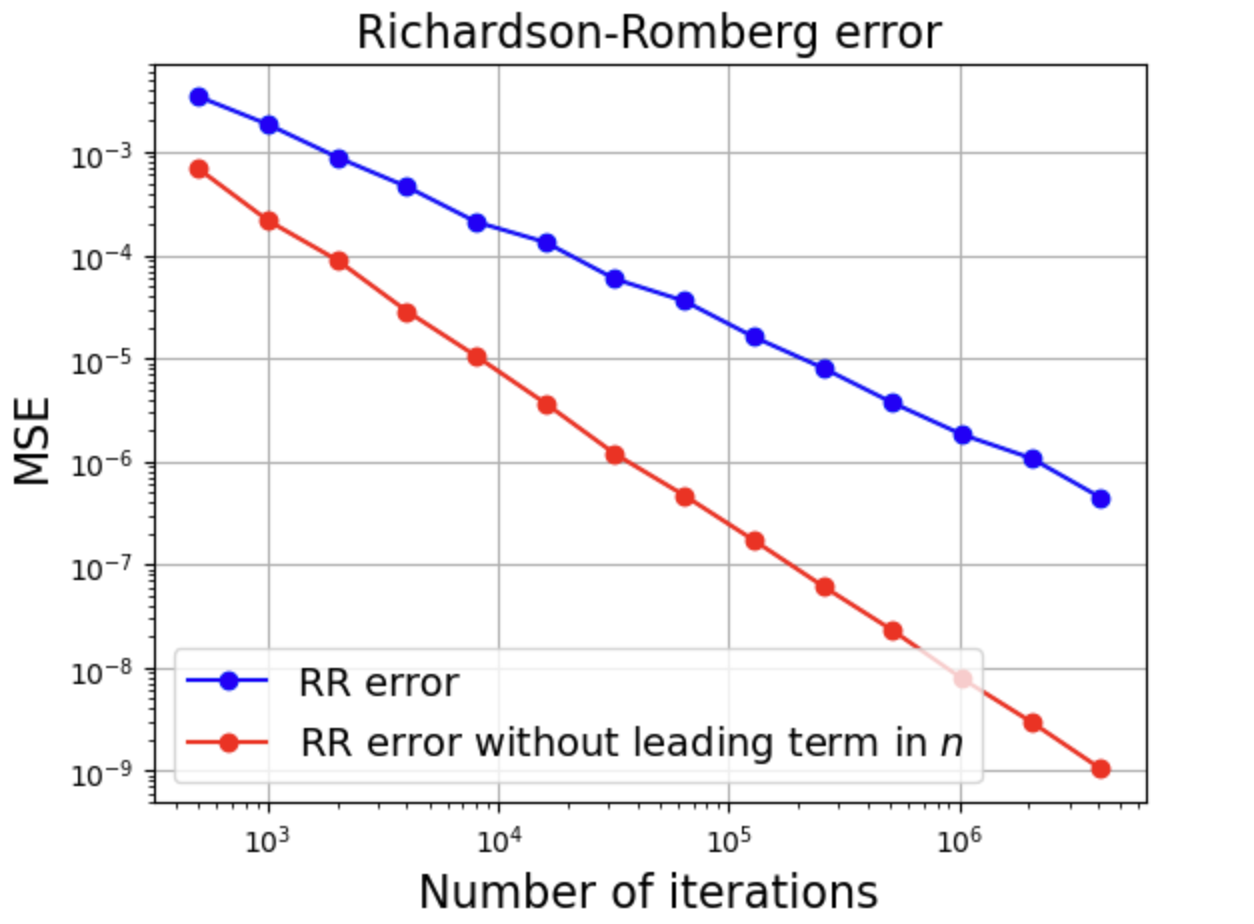}
\includegraphics[width=0.49\linewidth]{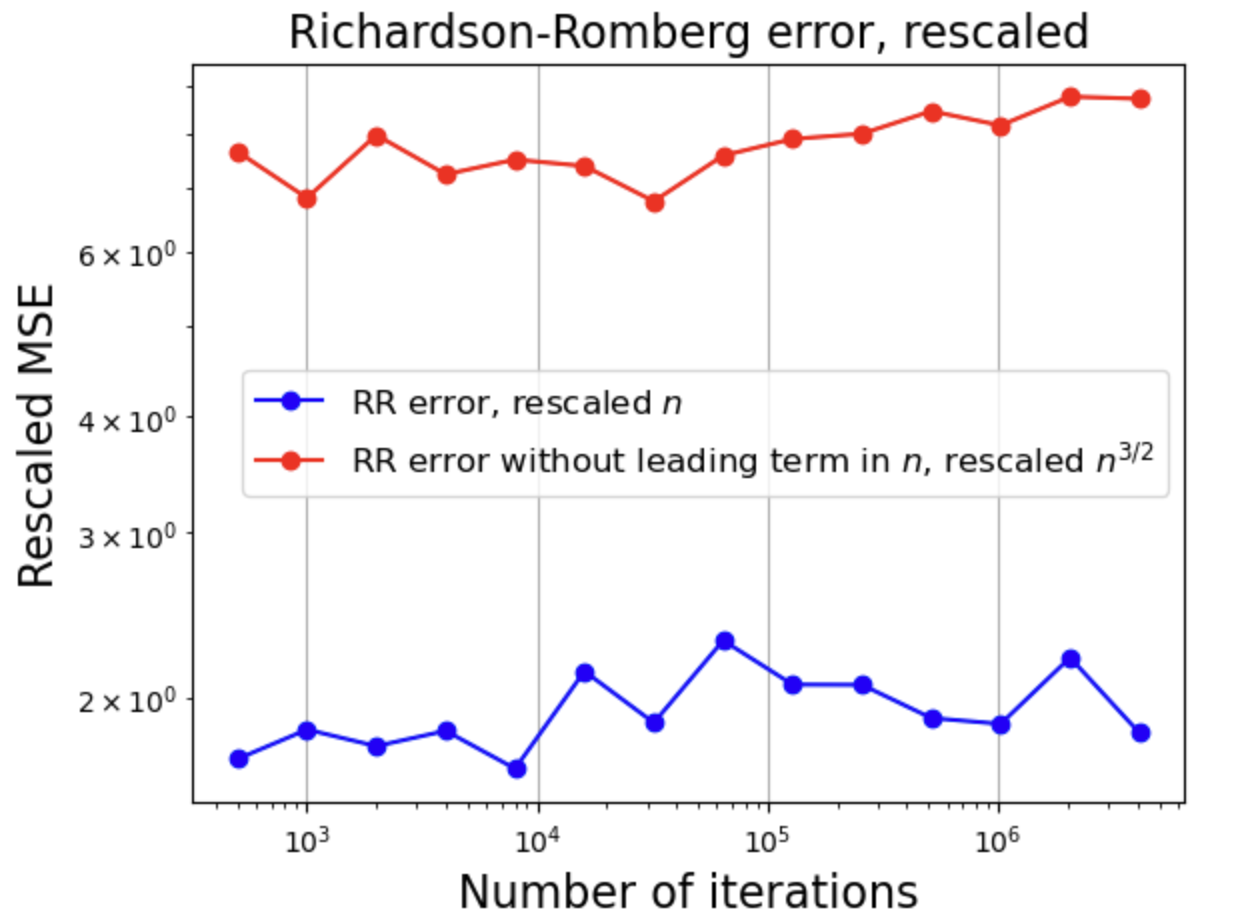}
\caption{Left picture: Richardson-Romberg experimental error with and without the leading term $\frac{1}{n}\sum_{k=n+1}^{2n}\noise{k+1}(\thetas)$. Right picture: same errors after rescaling by $n$ and $n^{3/2}$, respectively.}
\label{fig:results-expe-1d}
\vspace{-3mm}
\end{figure}

\vspace{-3mm}
\section{Conclusion}
\label{sec:conclusion}
In this paper, we study the non-asymptotic error bounds for the Richardson-Romberg estimator built upon the Polyak-Ruppert averaged SGD iterates with a constant step size. In particular, under an appropriate choice of step size, depending on the total number of iterations $n$, the corresponding root-MSE bound admits both a sharp leading term, which aligns with the minimax-optimal covariance matrix, and a second-order term of order $\mathcal{O}(n^{-3/4})$, which is the best known rate among first-order methods. Future research directions include, firstly, generalizing the proposed algorithm to the setting of dependent noise sequences $\{\xi_k\}_{k \in \nset}$ in the stochastic gradients \eqref{eq:stoch_minimization}. Another natural question is to study the properties of $\prtheta_{n}^{(RR)}$ under relaxed assumptions on $f$. In particular, it would be interesting to remove additional smoothness assumptions on $f$ (bounded $3$-rd and $4$-th derivatives), and to relax the strong convexity condition \Cref{ass:mu-convex}. One more research direction is to quantify a relation between the parameter $\delta$ in \eqref{eq:rate_setting0} and convergence rates in \eqref{eq:CLT_fort}, following the approach suggested in  \citet{shao2022berry}.

\newpage 
\section*{Acknowledgement}
The work of M. Sheshukova, D. Belomestny, A. Naumov, and S. Samsonov was prepared within the framework of the HSE University Basic Research Program.  The work of E. Moulines has been partly funded by the European Union (ERC-2022-SYG-OCEAN-101071601). Views and opinions expressed are however those of the author(s) only and do not necessarily reflect those of the European Union or the European Research Council Executive Agency. Neither the European Union nor the granting authority can be held responsible for them. 

\bibliography{bibfile}
\bibliographystyle{iclr2025_conference}
\newpage

\appendix 
\section{Proof of \Cref{lem:Wasserstein_ergodicity},  \Cref{prop:bias_PR}, and \Cref{prop:bias_RR}}
\label{sec:proof_prop_bias_PR_and_RR}
Throughout this appendix we use $c_0$ for an absolute constant, which values may vary from line to line. In addition, when the upper index of $\theta_k^{(\gamma)}$ or $\theta_k^{(2\gamma)}$ is omitted, we assume the result applies to iterations of $\theta_k^{(\gamma)}$. The corresponding results for $\theta_k^{(2\gamma)}$ can be obtained by substituting $2\gamma$ instead of $\gamma$. We provide some additional definitions related to the Markov kernels and kernel couplings, particularly useful when considering convergence in Wasserstein semimetric. Detailed exposition can be found in \citep[Chapter 20]{douc:moulines:priouret:soulier:2018}. 
\par 
 Let $\MK(z,A)$ be a Markov kernel on $(\Zset,\Zsigma)$. A Markov kernel $\MKK$ on $(\Zset^2,\Zsigma^{\otimes 2})$ is called a kernel coupling of $(\MK,\MK)$ (that is, of $\MK$ with itself), if for all $(z,z') \in \Zset^2$ and $A \in \Zsigma$, $\MKK((z, z'), A \times \Zset) = \MK(z, A)$ and $\MKK((z,z'), \Zset \times A) = \MK(z',A)$.
If $\MKK$ is a kernel coupling of $(\MK,\MK)$, then for all $n \in \nset$, $\MKK^n$ is a kernel coupling of $(\MK^n,\MK^n)$ and for any $\Pi \in \mathscr{C}(\xi, \xi')$, $\Pi \MKK^n$ is a coupling of $(\xi \MK^n,\xi'\MK^n)$. Moreover, it holds that
\begin{equation}
\label{eq:kernel_coupling_properties}
\Wass{c}(\xi \MK^n,\xi' \MK^n) \leq \int_{\Zset\times\Zset}  \MKK^n c(z,z') \Pi(\rmd z\rmd z')\eqsp,
\end{equation}
see \citep[Corollary 20.1.4]{douc:moulines:priouret:soulier:2018}. For any  probability measure $\Pi$ on $(\Zset^2,\Zsigma^{\otimes 2})$, we denote by $\PP_{\Pi}^\MKK$ and  $\PE_{\Pi}^\MKK$ the probability and  the
expectation on the canonical space $((\Zset^2)^\nset,(\Zsigma^{\otimes 2})^{\otimes \nset})$  such that the canonical process
$\{(Z_n, Z_n'), n \in \nset\}$ is a Markov chain with initial
probability $\Pi$ and Markov kernel $\MKK$. We write
$\PE_{z,z'}^{\MKK}$ instead $\PE_{\delta_{z,z'}}^{\MKK}$.
\par 
To prove \Cref{lem:Wasserstein_ergodicity} we need the following auxiliary lemma about the last iterate of SGD algorithm. It can be found in \citep[Lemma~10]{durmus2020biassgd}, but we provide its proof here for completeness.
\begin{lemma}
\label{lem:second_moment_last_iterate}
Assume \Cref{ass:mu-convex},  \Cref{ass:L-smooth}, and \Cref{ass:rand_noise}($2$). Then for any $\gamma \in (0;1/(2\L)]$ and  any $k,r\in \nset$ it holds that 
\begin{equation}
\PE^{1/2}[\norm{\theta_{k+r}-\thetas}^2|\mathcal{F}_k]\leq (1-\gamma\mu)^{r/2}\norm{\theta_k-\thetas} + \frac{2^{1/2}\gamma^{1/2}\tau_2}{\mu^{1/2}}
\end{equation}
\end{lemma}
\begin{proof}
Using the recurrence \eqref{eq:sgd_recursion}, we get
    \begin{align*}
    &\PE[\norm{\theta_{k+1}-\thetas}^2|\mathcal{F}_k] = 
    \PE\bigl[\norm{\theta_{k}-\thetas - \gamma\nabla F(\theta_k, \xi_{k+1})}^2|\mathcal{F}_k\bigr]\\ & \qquad =
    \PE\bigl[\norm{\theta_k-\thetas}^2 -2\gamma\langle \nabla F(\theta_k, \xi_{k+1}), \theta_k-\thetas \rangle + \gamma^2\norm{\nabla F(\theta_k, \xi_{k+1})}^2|\mathcal{F}_k\bigr]\eqsp.
    \end{align*}
    Applying \Cref{ass:rand_noise}($2$), we get 
    \begin{equation*}
    \PE[\norm{\theta_{k+1}-\thetas}^2|\mathcal{F}_k]\leq \norm{\theta_k-\thetas}^2 -2\gamma \langle\nabla f(\theta_k) - \nabla f(\thetas), \theta_k-\thetas \rangle + \gamma^2\PE[\norm{\nabla F(\theta_k, \xi_{k+1})}^2|\mathcal{F}_k]\eqsp.
    \end{equation*}
    Since $\nabla f(\thetas) = 0$, using \Cref{ass:rand_noise}($2$), we obtain 
    \begin{align*}
        \PE[\norm{\nabla F(\theta_k, \xi_{k+1})}^2|\mathcal{F}_k] &=  \PE[\norm{\nabla F(\theta_k, \xi_{k+1})-\nabla F(\thetas, \xi_{k+1}) + \noise{k+1}(\thetas)}^2|\mathcal{F}_k]\\&\leq 2\L\langle\nabla f(\theta_k) - \nabla f(\thetas), \theta_k-\thetas \rangle  + 2\tau_2^2\eqsp.
    \end{align*}
    Using \Cref{ass:mu-convex}, \Cref{ass:L-smooth}, and the fact that $\gamma \leq 1/(2\L)$, we get 
    \[
        \PE[\norm{\theta_{k+1}-\thetas}^2|\mathcal{F}_k]\leq (1- 2\gamma\mu(1-\L\gamma))\norm{\theta_k-\thetas}^2 + 2\gamma^2\tau_2^2 \leq (1- \gamma\mu)\norm{\theta_k-\thetas} + 2\gamma^2\tau_2^2\eqsp.
    \]
    Hence, applying tower property for conditional expectations, we obtain 
    \begin{equation*}
          \PE[\norm{\theta_{k+r}-\thetas}^2|\mathcal{F}_k] \leq (1-\gamma\mu)^r\norm{\theta_k-\thetas}^2 + 2\gamma^2\tau_2^2\sum_{i=0}^r(1- \gamma\mu)^i \leq(1-\gamma\mu)^r\norm{\theta_k-\thetas}^2 + \frac{2\gamma\tau_2^2}{\mu}\eqsp.
    \end{equation*}
\end{proof}

\subsection{Proof of \Cref{lem:Wasserstein_ergodicity}} 
\label{sec:proof_wass_ergodicity}
Consider the synchronous coupling construction defined by the recursions
\begin{equation}
\label{eq:kernel_coupling_construction}
\begin{split}
\theta^{(\gamma)}_{k+1} &= \theta^{(\gamma)}_{k} - \gamma \nabla F(\theta^{(\gamma)}_{k}, \xi_{k+1})\eqsp, \quad \theta^{(\gamma)}_{0} = \theta \in \rset^{d}\eqsp, \\
\tilde{\theta}^{(\gamma)}_{k+1} &= \tilde{\theta}^{(\gamma)}_{k} - \gamma \nabla F(\tilde{\theta}^{(\gamma)}_{k}, \xi_{k+1})\eqsp, \quad \tilde{\theta}^{(\gamma)}_{0} = \tilde{\theta} \in \rset^{d}\eqsp.
\end{split}
\end{equation}
The pair $(\theta^{(\gamma)}_{k},\tilde{\theta}^{(\gamma)}_{k})_{k \in \nset}$ defines a Markov chain with the Markov kernel $\MKK_{\gamma}(\cdot,\cdot)$, which is a coupling kernel of $(\MK_{\gamma},\MK_{\gamma})$. From now on we omit an upper index $(\gamma)$ and write simply $(\theta_{k},\tilde{\theta}_{k})_{k \in \nset}$. Applying now \Cref{ass:rand_noise}($2$), we get for $\gamma \leq 1/\L$ that
\begin{align}
&\PE[\norm{\theta_{k+1} - \tilde{\theta}_{k+1}}^2|\F_k] = \PE[\norm{\theta_k - \tilde{\theta}_{k} - \gamma(\nabla F(\theta_k, \xi_{k+1})-\nabla F(\tilde{\theta}_{k}, \xi_{k+1}))}^2|\F_k] \nonumber \\ 
& \qquad \qquad  = \norm{\theta_k - \tilde{\theta}_{k}}^2 + \gamma^2 \PE[\norm{\nabla F(\theta_k, \xi_{k+1})-\nabla F(\tilde{\theta}_{k}, \xi_{k+1})}^2|\F_k] \nonumber \\
&\qquad \qquad \qquad \qquad - 2\gamma \langle \nabla f(\theta_k)-\nabla f(\tilde{\theta}_{k}), \theta_k - \tilde{\theta}_{k}\rangle \nonumber \\
& \qquad \qquad \leq (1-\gamma\mu)\norm{\theta_k-\tilde{\theta}_{k}}^2 \label{eq:1-step-contr-second-moment},
\end{align}
where in the last inequality we additionally used $1-2\gamma\mu(1-\gamma \L/2) \leq 1-\gamma\mu$. Similarly, for a cost function $c$ defined in \eqref{eq:function_c_def}, we get using H\"older's and  Minkowski’s inequalities, that for any $r\in \nset$
\begin{multline*}
\PE[c(\theta_{k+r}, \tilde{\theta}_{k+r})|\F_k] 
\leq  \PE^{1/2}[\norm{\theta_{k+r}-\tilde{\theta}_{k+r}}^2|\F_k]\bigl( \PE^{1/2}[\norm{\theta_{k+r} -\thetas}^2|\F_k] \\
+ \PE^{1/2}[\norm{\tilde{\theta}_{k+r} -\thetas}^2|\F_k] + \frac{2^{3/2}\gamma^{1/2}\tau_2}{\mu^{1/2}}\bigr)\eqsp.
\end{multline*}
Combining the above inequalities and applying \Cref{lem:second_moment_last_iterate}, we obtain 
\begin{align*}
\PE[c(\theta_{k+r}, \theta'_{k+r})|\F_k] 
& \leq (1-\gamma\mu)^{r/2}\norm{\theta_k-\tilde{\theta}_k}\bigl((1-\gamma\mu)^{r/2}(\norm{\theta_k - \thetas} + \norm{\tilde{\theta}_k - \thetas}) + \frac{2^{5/2}\gamma^{1/2}\tau_2}{\mu^{1/2}}\bigr) \\ 
& \leq 2(1-\gamma \mu)^{r/2}c(\theta_k, \theta'_k)\eqsp.
\end{align*}
Note that $2(1-\gamma \mu)^{r/2} \leq 2$ for any $r \leq m(\gamma)-1$ and $2(1-\gamma \mu)^{m(\gamma)/2} \leq 1/2$. Hence, applying the result of \citep[Theorem 20.3.4]{douc:moulines:priouret:soulier:2018}, we obtain that the Markov kernel $\MK_\gamma$ admits a unique invariant distribution $\pi_\gamma$. Applying \eqref{eq:kernel_coupling_properties}, we get
\begin{equation}
    \Wass{c}(\nu \MK_{\gamma}^k, \pi_\gamma) \leq  2 (1/2)^{\lfloor k/m(\gamma) \rfloor} \Wass{c}(\nu, \pi_\gamma)\eqsp.
\end{equation}
It remains to note that $(1/2)^{\lfloor k/m(\gamma) \rfloor} \leq 2 (1/2)^{ k/m(\gamma)}$, and the statement follows.

\subsection{Proof of \Cref{prop:bias_PR}} 
We first prove \eqref{eq:bias_expansion_result} and \eqref{eq:bias_expansion_result_2} and introduce some additional notations. Under \Cref{ass:mu-convex} -- \Cref{ass:rand_noise}($2$), we define a matrix-valued function $\mathcal{C}(\theta): \rset^d \rightarrow \rset^{d \times d}$ as
\begin{equation}
\label{eq:C_theta_definition}
\calC(\theta) = \PE[\noise{1}(\theta)^{\otimes 2}]\eqsp.
\end{equation}

The result below is essentially based on an appropriate modification of the bounds presented in \citet[Lemma~18]{durmus2020biassgd}. A careful inspection of its proof reveals that we do not need additional assumptions on $\calC(\theta)$,  instead we use \Cref{lem: noise_covariance_bound}. 
\begin{lemma}
\label{lem:bias_decomposition}
Assume \Cref{ass:mu-convex}, \Cref{ass:L-smooth}, \Cref{ass:rand_noise}($6$), and \Cref{ass:stationary_moments_bounds}($6$). Then, for any $\gamma \in (0, 1/(\L \Conststep{6})]$, it holds
\begin{equation}
\label{eq:firstdev}
\bgammatheta - \thetas = -(\gamma/2) \{\H\}^{-1}
 \{\nabla^{3}f(\thetas)\} \mathbf{T} \calC(\thetas) + B_1 \gamma^{3/2}\eqsp,
\end{equation}
where $\bgammatheta$ is defined in \eqref{eq:bias_expansion_result}, $\calC(\theta)$ is defined in \eqref{eq:C_theta_definition}, and $B_1 \in \rset^{d}$ satisfies $\norm{B_1} \leq \ConstPR{b,1}$, with
\begin{equation}
\begin{split}
\label{eq: def_ConstPRb1,2}
&\ConstPR{b,1} = \frac{\L}{\mu}\ConstPR{b, 2} + \frac{\L\Constlast{6}^{3/2}\tau_6^3}{2\mu^{5/2}}\\
&\ConstPR{b,2} = \frac{\sqrt{\L}\tau_2^2}{\sqrt{\Conststep{6}}\mu} + \frac{1}{2}\left(\left(\frac{\L^2\Constlast{2}}{\mu^{3/2}} + \frac{\L\sqrt{\Constlast{2}}}{\sqrt{\mu}}\right)\tau_2^2 + \frac{\L\Constlast{6}^{3/2}\tau_6^3 }{2\mu^{3/2}} + \frac{\L^{1/2}\Constlast{4}^2  \tau_{4}^4}{4\mu^2\Conststep{6}^{3/2}}\right)
\end{split}
\end{equation}
Moreover, 
\begin{equation}
\label{eq:seconddev}
\covgammatheta = \gamma \mathbf{T} \calC(\thetas) + B_2 \gamma^{3/2}\eqsp,
\end{equation}
where the operator $\mathbf{T}: \rset^{d \times d} \to \rset^{d \times d}$ is defined by the relation 
\[
\vec{\mathbf{T} A} = (\H \otimes \Id + \Id \otimes \H)^{-1} \vec{A}
\]
for any matrix $A \in \rset^{d \times d}$, and $B_2 \in \rset^{d \times d}$ is a matrix, such that $\norm{B_2} \leq \ConstPR{b,2}$.
\end{lemma}

\begin{proof}
Let $(\theta_k^{(\gamma)})_{k \in \nset}$ be a recurrence defined in \eqref{eq:sgd_recursion} with initial distribution $\theta_0 \sim \pi_{\gamma}$. Recall that $\theta_0$ is independent from the noise variables $(\xi_k)_{k \geq 1}$. First, applying a third-order Taylor expansion of $\nabla f(\theta)$ around $\thetas$, we obtain
\begin{equation}
\label{eq:theo:statio_general_1_0}
\nabla f(\theta) = \H (\theta - \thetas) + (1/2) \{\nabla^{3}f(\thetas)\}(\theta - \thetas)^{\otimes 2} + G(\theta)\eqsp,
\end{equation}
where $G(\theta)$ has a form 
\[
G(\theta) = \frac{1}{2}\left(\int_{0}^{1} t^2 \nabla^{4}f(t\thetas + (1-t)\theta)\,dt\right) (\theta - \thetas)^{\otimes 3}\eqsp.
\]
Thus, using \Cref{ass:L-smooth},
\[
\norm{G(\theta)} \leq \frac{\L_{4}}{2} \norm{\theta-\thetas}^{3}\eqsp.
\]
Integrating \eqref{eq:theo:statio_general_1_0} with respect to $\pi_{\gamma}$, we get  
\begin{equation}
\label{eq:integration_pi_gamma_grad}
\H (\bgammatheta - \thetas) + (1/2)\{\nabla^{3}f(\thetas)\} \left[\int_{\rset^{d}} (\theta-\thetas)^{\otimes 2} \pi_{\gamma} (\rmd \theta) \right] = -\int_{\rset^{d}} G(\theta) \pi_{\gamma} (\rmd \theta)\eqsp.
\end{equation}
Moreover, using \Cref{ass:stationary_moments_bounds}($6$), we have
\begin{equation}
    \norm{\int_{\rset^{d}} G(\theta) \pi_{\gamma} (\rmd \theta)} \leq \gamma^{3/2}\frac{\L\Constlast{6}^{3/2}\tau_6^3}{2\mu^{3/2}}\eqsp.
    \end{equation}
Now we provide an explicit expression for the covariance matrix 
\begin{equation}
\covgammatheta = \int_{\rset^{d}} (\theta-\thetas)^{\otimes 2} \pi_{\gamma} (\rmd \theta)\eqsp.
\end{equation}
Using the recurrence \eqref{eq:sgd_recursion}, we obtain that 
\[
\theta_{1} - \thetas = (\Id - \gamma \H)(\theta_0 - \thetas) - \gamma \noise{1}(\theta_0) -\gamma \eta(\theta_0)\eqsp,
\]
where the function $\eta(\cdot)$ is defined in \eqref{eq:eta_k_definition}. Hence, taking second moment w.r.t. $\pi_{\gamma}$ from both sides, we get that 
\begin{multline}
\label{eq:cov_gamma_estimate}
\covgammatheta = (\Id - \gamma \H) \covgammatheta (\Id - \gamma \H) + \gamma^2 \int_{\rset^{d}} \calC(\theta) \pi_{\gamma} (\rmd \theta) + \gamma^2 \int_{\rset^{d}} \{\eta(\theta)\}^{\otimes 2} \pi_{\gamma} (\rmd \theta) \\
- \gamma \int_{\rset^{d}} \left[ (\Id - \gamma \H)(\theta - \thetas)\{\eta(\theta)\}^{\top} + \eta(\theta)(\theta - \thetas)^{\top} (\Id - \gamma \H) \right] \pi_{\gamma} (\rmd \theta)\eqsp. 
\end{multline}
In the above equation $\calC(\theta)$ is defined in \eqref{eq:C_theta_definition}, and we additionally used that $\CPE{\noise{1}(\theta_0)}{\F_{0}} = 0$. Using Taylor's expansion with integral remainder together with \Cref{ass:L-smooth} and \Cref{ass:stationary_moments_bounds}($6$), 
\begin{align*}
    &\gamma^2 \norm{\int_{\rset^{d}} \{\eta(\theta)\}^{\otimes 2} \pi_{\gamma} (\rmd \theta)}[F]\leq \gamma^4\frac{\L^2\Constlast{4}^2  \tau_{4}^4}{4\mu^2}\eqsp,\\
    &\gamma \norm{\int_{\rset^{d}} \left[ (\Id - \gamma \H)(\theta - \thetas)\{\eta(\theta)\}^{\top} + \eta(\theta)(\theta - \thetas)^{\top} (\Id - \gamma \H) \right] \pi_{\gamma} (\rmd \theta)}[F] \leq \gamma^{5/2}\frac{\L\Constlast{6}^{3/2}\tau_6^3 }{2\mu^{3/2}}
\end{align*}
Moreover, \eqref{eq:C_theta_definition} together with \Cref{ass:stationary_moments_bounds}($6$) imply that 
\[
\int_{\rset^{d}} \calC(\theta) \pi_{\gamma} (\rmd \theta) = \calC(\thetas) + B \gamma^{1/2}\eqsp, 
\]
where $B \in \rset^{d \times d}$ satisfies $\norm{B} \leq \ConstPR{b,3}$. Using  \eqref{eq:cov_gamma_estimate} together with \Cref{ass:stationary_moments_bounds}($6$), we obtain that $\covgammatheta$ is a solution to the matrix equation 
\begin{equation}
\label{eq:equation_on_covgamma}
\H \covgammatheta + \covgammatheta \H - \gamma \H \covgammatheta \H =  \gamma \calC(\thetas) + B' \gamma^{3/2}\eqsp,
\end{equation}
where
\begin{equation}
\label{eq:bound_norm_B'}
    \norm{B'}[F] \leq \ConstPR{b,3} + \frac{\L\Constlast{6}^{3/2}\tau_6^3 }{2\mu^{3/2}} + \frac{\L^{1/2}\Constlast{4}^2  \tau_{4}^4}{4\mu^2\Conststep{6}^{3/2}}\eqsp.
\end{equation}
The matrix equation \eqref{eq:equation_on_covgamma} can be written using vectorization operation as 
\begin{multline*}
\vec{\covgammatheta} = \gamma (\H \otimes \Id + \Id \otimes \H - \gamma \H \otimes \H)^{-1} \vec{\calC(\thetas)} \\ + \gamma^{3/2} (\H \otimes \Id + \Id \otimes \H - \gamma \H \otimes \H)^{-1} \vec{B'}\eqsp.
\end{multline*}
Applying \Cref{lem:propirties_operator_prop_2}\ref{eq:decomposition_operator}, we obtain that 
\[(\H \otimes \Id + \Id \otimes \H - \gamma \H \otimes \H)^{-1} = (\H \otimes \Id + \Id \otimes \H)^{-1} + D\eqsp,
\]
where $D \in \rset^{d^2 \times d^2}$ is a matrix which satisfies 
\[
\norm{D} \leq \gamma \L/\mu\eqsp.
\]
Thus, 
\begin{multline*}
    \vec{\covgammatheta} = \gamma (\H \otimes \Id + \Id \otimes \H)^{-1} \vec{\calC(\thetas)} + \gamma^{3/2} (D/\sqrt{\gamma})\vec{\calC(\thetas)} \\ + \gamma^{3/2} (\H \otimes \Id + \Id \otimes \H - \gamma \H \otimes \H)^{-1} \vec{B'}\eqsp.
\end{multline*}
We define the matrix $B_2$ such that 
\begin{equation}
    \vec{B_2} = (D/\sqrt{\gamma})\vec{\calC(\thetas)} +(\H \otimes \Id + \Id \otimes \H - \gamma \H \otimes \H)^{-1} \vec{B'}
\end{equation}
Hence, using \Cref{ass:rand_noise}, \eqref{eq:bound_norm_B'}, and \Cref{lem:propirties_operator_prop_2}, we get
\begin{align*}
    \norm{B_2} &\leq \norm{B_2}[F]= \norm{\vec{B_2}} 
    \\&\leq \norm{D/\sqrt{\gamma}}\norm{\vec{\calC(\thetas)}} +  \norm{(\H \otimes \Id + \Id \otimes \H - \gamma \H \otimes \H)^{-1}}\norm{B'}[F]\\&\leq 
    \frac{\sqrt{\gamma}\L\tau_2^2}{\mu} + \frac{1}{2}\left(\ConstPR{b,3} + \frac{\L\Constlast{6}^{3/2}\tau_6^3 }{2\mu^{3/2}} + \frac{\L^{1/2}\Constlast{4}^2  \tau_{4}^4}{4\mu^2\Conststep{6}^{3/2}}\right)\\ & \leq \frac{\sqrt{\L}\tau_2^2}{\sqrt{\Conststep{6}}\mu} + \frac{1}{2}\left(\ConstPR{b,3} + \frac{\L\Constlast{6}^{3/2}\tau_6^3 }{2\mu^{3/2}} + \frac{L^{1/2}\Constlast{4}^2  \tau_{4}^4}{4\mu^2\Conststep{6}^{3/2}}\right)\eqsp,
\end{align*}
where in the last inequality we use that $\gamma \leq 1/(\L\Conststep{6})$. Combining the above bounds in \eqref{eq:integration_pi_gamma_grad}, we arrive at the expansion formula \eqref{eq:firstdev}.
\end{proof}

\begin{lemma}
\label{lem:propirties_operator_prop_2}
Assume \Cref{ass:mu-convex} and \Cref{ass:L-smooth}. Then for any $\gamma \in (0, 1/(\L \Conststep{6})]$ it holds
\begin{enumerate}[label=(\alph*)]
    \item \label{eq:eig_val_operators} All eigenvalues $\tilde{\lambda}_i$, $i \in \{1,\ldots,d^2\}$ of the matrix $\H \otimes \Id + \Id \otimes \H - \gamma \H \otimes \H$ satisfy
    \[
    2\mu(1-\gamma \L/2)\leq \tilde{\lambda}_i \leq 2\L(1-\gamma\mu/2)\eqsp;
    \]
    \item \label{eq:norm_inverse_operator}$ \norm{(\H \otimes \Id + \Id \otimes \H - \gamma \H \otimes \H)^{-1}} \leq 1/2$;
    \item \label{eq:decomposition_operator}
    In addition, 
    \[(\H \otimes \Id + \Id \otimes \H - \gamma \H \otimes \H)^{-1} = (\H \otimes \Id + \Id \otimes \H)^{-1} + D \text{ where }\norm{D} \leq \gamma \L/\mu\eqsp.\]
\end{enumerate}
\end{lemma}
\begin{proof}
Assumption \Cref{ass:mu-convex} guarantees that the symmetric matrix $\H$ is positive-definite. Let $u_1,\ldots,u_d \in \rset^{d}$ and $\lambda_1 \geq \lambda_2 \geq \ldots \geq \lambda_d \geq \mu > 0$ be its eigenvectors and eigenvalues, respectively. Then we notice that 
\[
\H \otimes \Id + \Id \otimes \H - \gamma \H \otimes \H = \H \otimes (\Id - (\gamma/2) \H) + (\Id - (\gamma/2) \H) \otimes \H\eqsp.
\]
Hence, the latter operator is also diagonalizable in the orthogonal basis $u_i \otimes u_j \in \rset^{d^2}$ with the respective eigenvalues being equal to $\lambda_i (1 - (\gamma/2) \lambda_j) + \lambda_j (1 - (\gamma/2) \lambda_i)$. Hence, we obtain the first part of lemma \ref{eq:eig_val_operators}. To prove \ref{eq:norm_inverse_operator} it remains to note that for $\gamma \leq 1/\L$ it holds $(2\mu(1-\gamma \L/2))^{-1} \leq 1/2$.
Set now 
\begin{equation}
\label{eq:S_R_def}
\begin{split}
S &= \H \otimes \Id + \Id \otimes \H \in \rset^{d^2 \times d^2} \\
R&= \H \otimes \H \in \rset^{d^2 \times d^2}\eqsp.
\end{split}
\end{equation}
Then it is easy to observe that 
\begin{align*}
(S - \gamma R)^{-1} = S^{-1} + S^{-1} \sum_{k=1}^{\infty} \gamma^k (R S^{-1})^{k}\eqsp,
\end{align*}
provided that $\gamma \norm{R S^{-1}} < 1$. Sice $R$ and $S$ are diagonalizable in the same orthogonal basis $\{u_i \otimes u_j\}_{1 \leq i,j \leq d}$ with the eigenvalues $\lambda_i \lambda_j$ and $\lambda_i + \lambda_j$, respectively, the condition $\gamma \norm{R S^{-1}} < 1$ holds provided that $\gamma < 2/\L$. Hence, for $\gamma \leq 1/\L$, it holds that 
\[
(\H \otimes \Id + \Id \otimes \H - \gamma \H \otimes \H)^{-1} = (\H \otimes \Id + \Id \otimes \H)^{-1} + D\eqsp,
\]
where $D \in \rset^{d^2 \times d^2}$ satisfies 
\[
\norm{D} \leq 2\gamma \norm{S^{-1}} \norm{R S^{-1}} \leq \frac{\gamma \L}{\mu}\eqsp.
\]
\end{proof}

We now state an auxiliary lemma about the function $\calC(\theta)$ from \eqref{eq:C_theta_definition}.
\begin{lemma}
\label{lem: noise_covariance_bound}
Assume \Cref{ass:mu-convex}, \Cref{ass:L-smooth}, \Cref{ass:rand_noise}($2$),  and \Cref{ass:stationary_moments_bounds}($2$). Then, for any $\gamma \in (0, 1/(\L \Conststep{2})]$, it holds
\[
\norm{\int_{\rset^{d}} \calC(\theta) \pi_{\gamma} (\rmd \theta) - \calC(\thetas)}[F] \leq \ConstPR{b,3}\gamma^{1/2}\eqsp,  
\]
where the constant $\ConstPR{b,3}$ is given by
\begin{equation}
\label{eq: def_ConstPR2}
\ConstPR{b,3} = \left(\frac{\L^2\Constlast{2}}{\mu^{3/2}} + \frac{\L\sqrt{\Constlast{2}}}{\sqrt{\mu}}\right)\tau_2^2\eqsp.
\end{equation}
\end{lemma}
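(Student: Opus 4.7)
The plan is to bound $\|\calC(\theta)-\calC(\thetas)\|$ pointwise in $\theta$ by a polynomial in $\|\theta-\thetas\|$ with known coefficients, and then integrate against $\pi_{\gamma}$ using the stationary moment bound \eqref{eq:stationary_p_moment_bound} from \Cref{ass:stationary_moments_bounds}($2$). Since $\calC(\theta)=\PE[\noise{1}(\theta)^{\otimes 2}]$, the natural first step is the telescoping identity
\[
\noise{1}(\theta)^{\otimes 2}-\noise{1}(\thetas)^{\otimes 2}=\bigl(\noise{1}(\theta)-\noise{1}(\thetas)\bigr)\otimes\noise{1}(\theta)+\noise{1}(\thetas)\otimes\bigl(\noise{1}(\theta)-\noise{1}(\thetas)\bigr),
\]
which after taking expectations and applying Cauchy--Schwarz yields
\[
\|\calC(\theta)-\calC(\thetas)\|\le 2\,\PE^{1/2}\bigl[\|\noise{1}(\theta)-\noise{1}(\thetas)\|^{2}\bigr]\cdot\bigl(\PE^{1/2}[\|\noise{1}(\theta)\|^{2}]+\PE^{1/2}[\|\noise{1}(\thetas)\|^{2}]\bigr).
\]

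Next I would estimate each factor separately. For the increment, use $\noise{1}(\theta)-\noise{1}(\thetas)=\nabla F(\theta,\xi)-\nabla F(\thetas,\xi)-(\nabla f(\theta)-\nabla f(\thetas))$ together with the variance-type identity $\PE[\|\noise{1}(\theta)-\noise{1}(\thetas)\|^{2}]=\PE[\|\nabla F(\theta,\xi)-\nabla F(\thetas,\xi)\|^{2}]-\|\nabla f(\theta)-\nabla f(\thetas)\|^{2}$. Invoking the co-coercivity bound in \Cref{ass:rand_noise}($2$) with $q=2$, followed by $\L$-smoothness from \Cref{ass:L-smooth}, gives $\PE[\|\noise{1}(\theta)-\noise{1}(\thetas)\|^{2}]\le\L^{2}\|\theta-\thetas\|^{2}$. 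For the second factor, note $\PE[\|\noise{1}(\thetas)\|^{2}]\le\tau_{2}^{2}$ by assumption, and $\PE^{1/2}[\|\noise{1}(\theta)\|^{2}]\le\PE^{1/2}[\|\noise{1}(\theta)-\noise{1}(\thetas)\|^{2}]+\PE^{1/2}[\|\noise{1}(\thetas)\|^{2}]\le \L\|\theta-\thetas\|+\tau_{2}$. Collecting these estimates produces the pointwise bound
\[
\|\calC(\theta)-\calC(\thetas)\|\lesssim \L^{2}\|\theta-\thetas\|^{2}+\L\tau_{2}\|\theta-\thetas\|.
\]

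Finally I would integrate against $\pi_{\gamma}$. The quadratic moment is controlled directly by \eqref{eq:stationary_p_moment_bound}: $\int\|\theta-\thetas\|^{2}\pi_{\gamma}(\rmd\theta)\le\Constlast{2}\gamma\tau_{2}^{2}/\mu$, and the first moment follows from Jensen's inequality as $\int\|\theta-\thetas\|\pi_{\gamma}(\rmd\theta)\le\tau_{2}\sqrt{\Constlast{2}\gamma/\mu}$. Substituting these in, the $\gamma^{1}$ term is dominated by the $\gamma^{1/2}$ term for $\gamma\le 1/(\L\Conststep{2})$, and combining the two contributions yields exactly
\[
\int\|\calC(\theta)-\calC(\thetas)\|\pi_{\gamma}(\rmd\theta)\le \gamma^{1/2}\tau_{2}^{2}\Bigl(\frac{\L^{2}\Constlast{2}}{\sqrt{\mu}}+\frac{\L\sqrt{\Constlast{2}}}{\sqrt{\mu}}\Bigr),
\]
which is the claimed inequality with $\ConstPR{2}$ as in \eqref{eq: def_ConstPR2}. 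No serious obstacle is anticipated; the only care needed is in tracking the constants and using co-coercivity in its full strength so as to avoid an extra factor of $\L$ that would spoil the form of $\ConstPR{2}$.
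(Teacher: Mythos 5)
Your proof is correct and follows essentially the same route as the paper: the same telescoping decomposition of $\calC(\theta)-\calC(\thetas)$ into an increment-squared term plus two cross terms, the same use of \Cref{ass:rand_noise}($2$) to obtain $\PE[\|\noise{1}(\theta)-\noise{1}(\thetas)\|^{2}]\le \L^{2}\|\theta-\thetas\|^{2}$, the same pointwise bound $\lesssim \L^{2}\|\theta-\thetas\|^{2}+\L\tau_{2}\|\theta-\thetas\|$, and the same integration against $\pi_{\gamma}$ via \Cref{ass:stationary_moments_bounds}($2$). The only cosmetic differences are that you apply Cauchy--Schwarz symmetrically to both factors (picking up an extra, harmless $\L^{2}\|\theta-\thetas\|^{2}$ contribution), and your final absorption of the $\gamma$-term into the $\gamma^{1/2}$-term is exactly the same (equally informal) step the paper itself takes.
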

\begin{proof}
Recall that 
\begin{align*}
\noise{1}(\theta) = \nabla F(\theta,\xi_{1}) - \nabla f(\theta)\eqsp.
\end{align*}
Hence, using the definition of $\calC(\theta)$ in \eqref{eq:C_theta_definition}, we get, with $\theta \in \rset^{d}$, that  
\begin{gather*}
\calC(\theta) - \calC(\thetas) = \PE[(\noise{1}(\theta)-\noise{1}(\thetas))(\noise{1}(\theta)-\noise{1}(\thetas))^T] + \PE[\noise{1}(\thetas)(\noise{1}(\theta)-\noise{1}(\thetas))^T] \\+ \PE[(\noise{1}(\theta)-\noise{1}(\thetas))\noise{1}(\thetas)^T].
\end{gather*}
Using \Cref{ass:rand_noise}($2$), we obtain 
\begin{equation*}
\PE[\norm{\noise{1}(\theta)-\noise{1}(\thetas)}^2] \leq \L \langle \nabla f(\theta) - \nabla f(\thetas), \theta -\thetas \rangle - \norm{\nabla f(\theta)-\nabla f(\thetas)}^2 \leq \L^2 \norm{\theta - \thetas}^2.
\end{equation*}
Hence, combining the previous inequalities and using H\"older's inequality, we obtain for any $\theta \in \rset^{d}$, that 
\begin{equation*}
\norm{\calC(\theta) - \calC(\thetas)}[F]  \leq\L^2\norm{\theta - \thetas}^2 + \tau_2 \L \norm{\theta - \thetas}.
\end{equation*}
Applying now \Cref{ass:stationary_moments_bounds}($2$), we obtain 
\begin{equation*}
    \norm{\int_{\rset^{d}} \calC(\theta)\pi_{\gamma} (\rmd \theta) - \calC(\thetas)}[F] \leq \int_{\rset^{d}} \norm{\calC(\theta) - \calC(\thetas)}[F] \pi_{\gamma} (\rmd \theta) \leq \L^2 \frac{\Constlast{2} \gamma \tau_{2}^2}{\mu}+ \tau_2 \L \sqrt{\frac{\Constlast{2} \gamma \tau_{2}^2}{\mu}}.
\end{equation*}
We conclude the proof by using the fact that $\gamma \mu \leq 1$.
\end{proof}

Now we prove \eqref{eq:bias_PR_bound}. We use synchronous coupling construction defined by the pair of recursions:
\begin{align*}
&\theta_{k+1} = \theta_k - \gamma\nabla F(\theta_k, \xi_{k+1}), \quad \theta_0 \sim \nu \\
&\tilde{\theta}_{k+1} = \tilde{\theta}_{k} - \gamma\nabla F(\tilde{\theta}_{k}, \xi_{k+1}), \quad \tilde{\theta}_{0} \sim \pi_\gamma\eqsp.
\end{align*}
Recall that the corresponding coupling kernel is denoted as $\MKK_{\gamma}(\cdot,\cdot)$. Then we obtain 
\begin{align*}
\PE_{\nu}[\bar{\theta}_{n}] - \thetas 
&= n^{-1} \sum_{k=n+1}^{2n} \PE^{\MKK_{\gamma}}_{\nu, \pi_{\gamma}}[\theta_k - \tilde{\theta}_k] + n^{-1} \sum_{k=n+1}^{2n}\PE_{\pi_{\gamma}}[\tilde{\theta}_k - \thetas] \\
&= n^{-1} \sum_{k=n+1}^{2n} \PE^{\MKK_{\gamma}}_{\nu, \pi_{\gamma}}[\theta_k - \tilde{\theta}_k] + (\bgammatheta - \thetas)\eqsp.
\end{align*}
Using \eqref{eq:1-step-contr-second-moment} and \Cref{ass:stationary_moments_bounds}($2$), we obtain 
\begin{align*}
\norm{\PE^K_{\nu, \pi_{\gamma}}[\theta_k - \tilde{\theta}_k]} 
&\leq (1-\gamma\mu)^{k/2}\{\PE^{\MKK_{\gamma}}_{\nu, \pi_{\gamma}}\norm{\theta_0-\tilde{\theta}_0}^2\}^{1/2} \\
&\leq (1-\gamma\mu)^{k/2} (\PE_{\nu}^{1/2}\bigl[\norm{\theta_0 -\thetas}^2\bigr] + \frac{\sqrt{2\gamma}\tau_2}{\sqrt{\mu}})\eqsp.
\end{align*}
Summing the above bounds for $k$ from $n+1$ to $2n$, we obtain \eqref{eq:bias_PR_bound}.

\subsection{Proof of \Cref{prop:bias_RR}} 
Note that 
\begin{equation*}
\PE_{\nu}[\bar\theta_n^{(RR)}-\thetas] = 2\PE_{\nu}[\bar\theta_n^{\gamma}-\thetas] - \PE_{\nu}[\bar\theta_n^{2\gamma}-\thetas].
\end{equation*}
Applying \eqref{eq:bias_PR_bound}, we obtain 
\begin{equation}
\norm{\PE_{\nu}[\bar\theta_n^{(RR)}-\thetas]} \leq (\frac{L}{\mu}\ConstPR{b,2} + \frac{L\Constlast{6}^{3/2}\tau_6^3}{2\mu^{5/2}})\gamma^{3/2} + \Rem_{3}(\theta_0-\thetas,\gamma,n),
\end{equation}
where 
\begin{equation}
\norm{\Rem_{3}(\theta_0-\thetas,\gamma,n)} \lesssim \frac{(1-\gamma\mu)^{(n+1)/2}}{n\gamma \mu}(\PE_{\nu}^{1/2}\bigl[\norm{\theta_0 -\thetas}^2\bigr] + \frac{\sqrt{\gamma}\tau_2}{\sqrt{\mu}})\eqsp,
\end{equation}
and the statement follows.

\section{Proof of \Cref{th:mart_decomposition_pr_error}}
\label{sec:proof_th_main_no_rr}
\begin{theorem}[Version of \Cref{th:mart_decomposition_pr_error} with explicit constants]
\label{th:mart_decomposition_pr_error_explicit_constants}

Assume \Cref{ass:mu-convex}, \Cref{ass:L-smooth}, \Cref{ass:rand_noise}($4$), and \Cref{ass:stationary_moments_bounds}($4$). Then for any $\gamma \in (0, 1/(\L \Conststep{4})]$, $n \in \nset$, and initial distribution $\nu$ on $\rset^{d}$, the sequence of Polyak-Ruppert estimates \eqref{eq:pr_averaged_est} satisfies
\begin{equation*}
\PE_{\nu}^{1/2}[\norm{\H(\bar{\theta}_n^{(\gamma)} - \thetas)}^{2}] \leq \frac{\sqrt{\trace{\noisecov}}}{\sqrt{n}} + \frac{\ConstPR{2}}{\gamma^{1/2}n} + \ConstPR{3} \gamma + \frac{\ConstPR{4} \gamma^{1/2}}{n^{1/2}} +  \Rem_{2}(n, \gamma, \norm{\theta_0-\thetas})\eqsp,
\end{equation*}
where we have set 
\begin{equation}
\label{eq:Const_2_3_4_def}
\begin{split}
\ConstPR{2} &= c_0 \Constlast{2}^{1/2}\tau_2 \eqsp, \quad 
\ConstPR{3} = c_0 \frac{\L \Constlast{4} \tau_4^2}{2\mu}\eqsp, \quad \ConstPR{4} = c_0 \L \Constlast{2}^{1/2} \tau_2 \eqsp.
\end{split}
\end{equation}
and the remainder term $\Rem_{2}(n, \gamma, \norm{\theta_0-\thetas})$ is given by 
\begin{multline}
\label{eq:remainder_term_2_definition}
\Rem_{2}(n, \gamma, \norm{\theta_0-\thetas}) = \frac{c_0 \L (1-\gamma\mu)^{(n+1)/2}}{\gamma \mu n} \PE_{\nu}^{1/2}\bigl[\norm{\theta_0-\thetas}^2\bigr] \\
+ \frac{\L c_0 (1-\gamma\mu)^{n+1}}{2n\gamma\mu}\PE^{1/2}_{\nu}\bigl[\norm{\theta_0 -\thetas}^4\bigr] \eqsp.
\end{multline}
\end{theorem}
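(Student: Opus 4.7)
The starting point is the summation-by-parts identity \eqref{eq:summ_parts}. Taking $L^{2}(\PE_\nu)$-norms of both sides and applying Minkowski's inequality splits the error into four pieces: the two boundary contributions $(\theta_{n+1}^{(\gamma)}-\thetas)/(\gamma n)$ and $(\theta_{2n}^{(\gamma)}-\thetas)/(\gamma n)$, the martingale noise average $n^{-1}\sum_{k=n+1}^{2n}\noise{k+1}(\theta_{k}^{\gamma})$, and the Taylor-remainder (bias) average $n^{-1}\sum_{k=n+1}^{2n}\eta(\theta_{k}^{\gamma})$. The plan is to show that these contributions produce, respectively, $\ConstPR{2}/(\gamma^{1/2}n)$ together with the exponentially decaying $(1-\gamma\mu)^{(n+1)/2}/(\gamma n)$ piece of $\Rem_{2}$, the sum $\sqrt{\trace{\noisecov}/n}+\ConstPR{4}\gamma^{1/2}/n^{1/2}$, and $\ConstPR{3}\gamma$ together with the $(1-\gamma\mu)^{n+1}/(\gamma n)$ piece of $\Rem_{2}$.

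For the noise term, the martingale structure of $\{\noise{k+1}(\theta_{k}^{\gamma})\}$ relative to $(\F_k)_{k\in\nset}$ yields exact variance summation
\begin{equation*}
\PE_\nu\Bigl[\Bigl\|\sum_{k=n+1}^{2n}\noise{k+1}(\theta_{k}^{\gamma})\Bigr\|^{2}\Bigr] = \sum_{k=n+1}^{2n}\PE_\nu[\|\noise{k+1}(\theta_{k}^{\gamma})\|^{2}].
\end{equation*}
I would then decompose $\noise{k+1}(\theta_{k}^{\gamma}) = \noise{k+1}(\thetas) + \bigl(\noise{k+1}(\theta_{k}^{\gamma})-\noise{k+1}(\thetas)\bigr)$ and apply Minkowski in $L^{2}$, using $\PE[\|\noise{k+1}(\thetas)\|^{2}]=\trace{\noisecov}$ together with co-coercivity \eqref{eq:L-co-coercivity-assum} at $q=2$ combined with $\L$-smoothness to bound $\PE[\|\noise{k+1}(\theta_{k}^{\gamma})-\noise{k+1}(\thetas)\|^{2}]\lesssim \L^{2}\PE[\|\theta_{k}^{\gamma}-\thetas\|^{2}]$. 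Inserting \eqref{eq:arbitrary_distr_p_moment_bound} from \Cref{ass:stationary_moments_bounds}($2$) then delivers the leading $\sqrt{\trace{\noisecov}/n}$ and the second-order $\ConstPR{4}\gamma^{1/2}/n^{1/2}$ contributions, plus the exponentially decaying transient proportional to $\PE_\nu^{1/2}[\|\theta_0-\thetas\|^{2}]$ that I absorb into $\Rem_{2}$.

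For the bias term, the second-order Taylor expansion of $\nabla f$ around $\thetas$ together with \Cref{ass:L-smooth} gives the quadratic bound $\|\eta(\theta)\|\le (\L/2)\|\theta-\thetas\|^{2}$; Minkowski then bounds the $L^{2}$-norm of the average by $(\L/(2n))\sum_{k=n+1}^{2n}\PE_\nu^{1/2}[\|\theta_{k}^{\gamma}-\thetas\|^{4}]$, and \Cref{ass:stationary_moments_bounds}($4$) (implied by \Cref{ass:stationary_moments_bounds}($6$) via Jensen) produces the $(\L/2)\Constlast{4}\gamma\tau_4^{2}/\mu = \ConstPR{3}\gamma$ contribution plus the $(1-\gamma\mu)^{n+1}/(\gamma n)\cdot\PE_\nu^{1/2}[\|\theta_0-\thetas\|^{4}]$ term of $\Rem_{2}$. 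For the two endpoint terms, I would use the synchronous coupling from the proof of \Cref{lem:Wasserstein_ergodicity}: couple $(\theta_{k}^{(\gamma)})$ started at $\nu$ with $(\tilde\theta_{k}^{(\gamma)})$ started at $\pi_\gamma$, apply the one-step contraction \eqref{eq:1-step-contr-second-moment} and the stationary bound \eqref{eq:stationary_p_moment_bound}, and finally divide by $\gamma n$ to recover $\ConstPR{2}/(\gamma^{1/2}n)$.

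\textbf{Main obstacle.} The most delicate point is preserving the sharp constant $\sqrt{\trace{\noisecov}}$ in the leading term: this requires Pythagoras \emph{followed by} an $L^{2}$-Minkowski split of $\noise{k+1}(\theta_{k}^{\gamma})$, rather than a naive triangle inequality before squaring, which would inflate the leading constant to $\tau_{2}$. A secondary subtlety is ensuring that the two boundary terms, after division by $\gamma n$, scale as $1/(\gamma^{1/2}n)$ rather than $1/(\gamma n)$; this rests crucially on the $\sqrt{\gamma}$ scaling of stationary second moments, which is the manifestation of geometric ergodicity in the weighted semi-metric $c$ of \Cref{lem:Wasserstein_ergodicity}.
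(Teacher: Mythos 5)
Your proposal is correct and follows essentially the same route as the paper's proof: the decomposition \eqref{eq:summ_parts} plus Minkowski, the martingale orthogonality combined with the split of $\noise{k+1}(\theta_k^{(\gamma)})$ at $\thetas$ and co-coercivity (this is exactly \Cref{lem:error_bound_second_moment}), the Taylor bound $\norm{\eta(\theta)}\le(\L_3/2)\norm{\theta-\thetas}^2$ with the fourth-moment bound from \Cref{ass:stationary_moments_bounds}($4$), and the moment bounds for the endpoint terms. The only cosmetic difference is that the paper controls the endpoint terms directly via \eqref{eq:arbitrary_distr_p_moment_bound} rather than re-running the synchronous coupling, which is a slightly shorter path to the same constants.
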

\begin{proof}
Throughout the proof we omit upper index $(\gamma)$ both for the elements of the sequence $\{\theta_k^{(\gamma)}\}_{k \in \nset}$ and Polyak-Ruppert averaged estimates $\bar{\theta}_n^{(\gamma)}$. Instead, we write simply $\theta_k$ and $\bar{\theta}_n$, respectively. Summing the recurrence \eqref{eq:summ_parts}, we obtain that
\begin{equation}
\label{eq:PR-decomposition-error}
\H (\bar{\theta}_n - \thetas) = \frac{\theta_{n+1} - \thetas}{\gamma n} - \frac{\theta_{2n+1} - \thetas}{\gamma n} - \frac{1}{n}\sum_{k=n+1}^{2n}\noise{k+1}(\theta_k) - \frac{1}{n}\sum_{k=n+1}^{2n}\eta(\theta_k)\eqsp.
\end{equation}
Applying the $3$-rd order Taylor expansion with integral remainder, we get that 
\[
\nabla f(\theta_k) = \H (\theta_k - \thetas) + \left(\int_{0}^{1}t \nabla^{3}f(t\thetas + (1-t)\theta_k)\,dt\right) (\theta_k - \thetas)^{\otimes 2}\eqsp,
\]
where $\nabla^{3}f(\cdot) \in \rset^{d \times d \times d}$. Using \Cref{ass:L-smooth}, we thus obtain that 
\[
\norm{\eta(\theta_k)} \leq \frac{1}{2} \L_{3}\norm{\theta_k - \thetas}^2\eqsp. 
\]
Applying Minkowski's inequality to the decomposition \eqref{eq:th_PR_main} and to the last term therein, we get
\begin{gather*}
\PE_{\nu}^{1/2}[\norm{\H (\bar{\theta}_n - \thetas)}^2] \leq \frac{\PE_{\nu}^{1/2}[\norm{\theta_{n+1} - \thetas}^{2}]}{\gamma n} + \frac{\PE_{\nu}^{1/2}[\norm{\theta_{2n+1} - \thetas}^{2}]}{\gamma n} + \frac{1}{n}\PE_{\nu}^{1/2}\bigl[\norm{\sum_{k=n+1}^{2n}\noise{k+1}(\theta_k)}^2\bigr] \\ + \frac{\L_3}{2n}\sum_{k=n+1}^{2n}\PE_{\nu}^{1/2}\bigl[\norm{\theta_k - \thetas}^{4}\bigr]\eqsp.
\end{gather*}
Applying \Cref{ass:stationary_moments_bounds}($2$), we obtain that for $\gamma \in (0;1/(\L \Conststep{2})]$ it holds that
\begin{equation}
\label{eq:last_iterate_2_moment}
\PE_{\nu}{\norm{\theta_k - \thetas}^{2}} \lesssim (1-\gamma\mu)^{k} \PE_{\nu}\bigl[\norm{\theta_0 -\thetas}^2\bigr] + \frac{\Constlast{2} \gamma\tau_2^2}{\mu}\eqsp.
\end{equation}
Moreover, from $\gamma \in (0;1/(\L \Conststep{4})]$ it holds that
\begin{equation}
\label{eq:last_iterate_4_moment}
\PE_{\nu}^{1/2}{\norm{\theta_k - \thetas}^{4}} \lesssim (1-\gamma\mu)^{k} \PE^{1/2}_{\nu}\bigl[\norm{\theta_0 -\thetas}^4\bigr] + \frac{\Constlast{4} \gamma \tau_4^2}{\mu}\eqsp.
\end{equation}
Combining \Cref{lem:error_bound_second_moment} with previous inequalities, we obtain 
\begin{multline*}
\PE_{\nu}^{1/2}[\norm{ \H (\bar{\theta}_n - \thetas)}^2] \lesssim \frac{\sqrt{\trace{\noisecov}}}{\sqrt{n}} + \frac{\Constlast{2}^{1/2}\tau_2}{\gamma^{1/2}n} + \frac{\L \Constlast{4} \gamma\tau_4^2}{2\mu}  + \frac{\L \Constlast{2}^{1/2}\gamma^{1/2}\tau_2}{\mu^{1/2} n^{1/2}} \\ 
+ \frac{(1-\gamma\mu)^{(n+1)/2}}{\gamma n} \left(\frac{\L}{\mu} + 1\right)\PE_{\nu}^{1/2}\bigl[\norm{\theta_0-\thetas}^2\bigr] + \frac{\L (1-\gamma\mu)^{n+1}}{n\gamma\mu}\PE_{\nu}^{1/2}\bigl[\norm{\theta_0-\thetas}^4\bigr]\eqsp,
\end{multline*}
and the result follows.
\end{proof}

Below we provide an auxiliary lemma used in the proof of \Cref{th:mart_decomposition_pr_error}.

\begin{lemma}
\label{lem:error_bound_second_moment}
Assume \Cref{ass:mu-convex}, \Cref{ass:L-smooth}, \Cref{ass:rand_noise}($2$), and \Cref{ass:stationary_moments_bounds}($2$). Then for any $\gamma \in (0;1/(\L \Conststep{2})]$ and any $n \in \nset$, it holds
\begin{equation}
\label{eq:2_nd_moment_bound_eps}
\PE_{\nu}^{1/2}[\norm{\sum_{k=n+1}^{2n}\{\noise{k+1}(\theta_k) - \noise{k+1}(\thetas)\}}^2] \lesssim \frac{\L \Constlast{2}^{1/2} \sqrt{\gamma n} \tau_2}{\mu^{1/2}} + \frac{\L (1-\gamma\mu)^{(n+1)/2}}{\gamma\mu} \PE_{\nu}^{1/2}\bigl[\norm{\theta_0-\thetas}^2\bigr] \eqsp.
\end{equation}
Moreover, let $p \geq 2$, and assume \Cref{ass:mu-convex}, \Cref{ass:L-smooth}, \Cref{ass:rand_noise}($p$), and \Cref{ass:stationary_moments_bounds}($p$). Then for any $\gamma \in (0;1/(\L \Conststep{p})]$ and $n \in \nset$ it holds that 
\begin{equation}
\label{eq:p_th_moment_bound_eps}
\begin{split}
\PE_{\nu}^{1/p}[\norm{\sum_{k=n+1}^{2n}\{\noise{k+1}(\theta_k) - \noise{k+1}(\thetas)\}}^p] &\lesssim \frac{\L \Constlast{p}^{1/2} \sqrt{\gamma n} p \tau_{p}}{\mu^{1/2}} \\
&\qquad + \frac{\L p (1-\gamma \mu)^{(n+1)/2}}{\mu^{1/2} \gamma^{1/2}} \PE^{1/p}_{\nu}[\norm{\theta_0 - \thetas}^{p}]\eqsp.
\end{split}
\end{equation}
\end{lemma}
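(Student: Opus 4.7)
The central observation is that $D_k := \noise{k+1}(\theta_k) - \noise{k+1}(\thetas)$ is a martingale difference with respect to $\F_k$: since $\xi_{k+1}$ is independent of $\F_k$ and both $\theta_k$ and $\thetas$ are $\F_k$-measurable, \Cref{ass:rand_noise}($2$) gives $\PE[D_k \mid \F_k] = 0$. Therefore $\sum_{k=n+1}^{2n} D_k$ is the terminal value of a Hilbert-space valued martingale and both the second and $p$-th moment bounds reduce to controlling the conditional square functions $\PE[\norm{D_k}^2 \mid \F_k]$ and $\PE[\norm{D_k}^p \mid \F_k]$, which can then be combined with the moment bounds from \Cref{ass:stationary_moments_bounds} on $\norm{\theta_k - \thetas}$.

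\textbf{Step 1: per-step conditional moment bounds.} Writing $D_k = \bigl(\nabla F(\theta_k,\xi_{k+1}) - \nabla F(\thetas,\xi_{k+1})\bigr) - \bigl(\nabla f(\theta_k) - \nabla f(\thetas)\bigr)$, the second summand is exactly $\CPE{\nabla F(\theta_k,\xi_{k+1}) - \nabla F(\thetas,\xi_{k+1})}{\F_k}$, so by the Pythagorean identity
\[
\PE[\norm{D_k}^2 \mid \F_k] = \PE\bigl[\norm{\nabla F(\theta_k,\xi_{k+1}) - \nabla F(\thetas,\xi_{k+1})}^2 \bigm| \F_k\bigr] - \norm{\nabla f(\theta_k) - \nabla f(\thetas)}^2.
\]
Applying the $\L_1$-co-coercivity from \Cref{ass:rand_noise}($2$) to the first term together with \Cref{ass:L-smooth} yields $\PE[\norm{D_k}^2 \mid \F_k] \leq \L^2 \norm{\theta_k - \thetas}^2$. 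For the $p$-th moment, I use \Cref{ass:rand_noise}($p$) with $q = p$ (which gives $\PE[\norm{\nabla F(\theta_k,\xi) - \nabla F(\thetas,\xi)}^p \mid \F_k] \leq \L^{p-1} \L_2 \norm{\theta_k - \thetas}^p$) and the triangle inequality to obtain $\PE[\norm{D_k}^p \mid \F_k] \lesssim \L^p \norm{\theta_k - \thetas}^p$.

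\textbf{Step 2: second and $p$-th moment bounds.} For $p = 2$, martingale orthogonality gives $\PE\bigl[\norm{\sum_{k=n+1}^{2n} D_k}^2\bigr] = \sum_{k=n+1}^{2n} \PE[\norm{D_k}^2] \leq \L^2 \sum_{k=n+1}^{2n} \PE[\norm{\theta_k - \thetas}^2]$. Substituting \eqref{eq:last_iterate_2_moment} splits the right-hand side into a transient part summing a geometric series $\sum_{k \geq n+1}(1-\gamma\mu)^k \leq (1-\gamma\mu)^{n+1}/(\gamma\mu)$ and a stationary part of size $n \cdot \Constlast{2} \gamma \tau_2^2/\mu$; taking the square root produces \eqref{eq:2_nd_moment_bound_eps} (where the weaker $1/(\gamma\mu)$ in the statement absorbs the sharper $1/(\gamma\mu)^{1/2}$ that one actually gets, using $\gamma\mu \leq 1$). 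For general $p \geq 2$, I invoke the Burkholder–Davis–Gundy (Pinelis) inequality for Hilbert-space valued martingales followed by Minkowski's inequality in $L^{p/2}$ to obtain
\[
\PE^{1/p}\bigl[\norm{\textstyle\sum_{k=n+1}^{2n} D_k}^p\bigr] \lesssim \sqrt{p}\,\Bigl(\textstyle\sum_{k=n+1}^{2n} \PE^{2/p}[\norm{D_k}^p]\Bigr)^{1/2}.
\]
Bounding each summand by $\L^2 \PE^{2/p}[\norm{\theta_k - \thetas}^p]$ via Step~1 and applying \eqref{eq:arbitrary_distr_p_moment_bound} from \Cref{ass:stationary_moments_bounds}($p$) yields the same split into a geometric transient and a stationary piece proportional to $\Constlast{p}\gamma\tau_p^2/\mu$. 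Summing and taking the square root gives \eqref{eq:p_th_moment_bound_eps} (with $\sqrt{p}$, which is absorbed in the stated factor $p$).

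\textbf{Main obstacle.} The only delicate point is the Hilbert-space martingale inequality used in the $p$-th moment argument: one must select a version of BDG/Rosenthal/Pinelis that scales like $\sqrt{p}$ (or at worst $p$) and that allows the conditional-Minkowski step which exchanges the outer $(p/2)$-th power and the sum. A secondary bookkeeping issue is that \Cref{ass:stationary_moments_bounds} is stated in the form $\PE^{2/p}[\norm{\theta_k-\thetas}^p]$ rather than $\PE^{1/p}[\norm{\theta_k-\thetas}^p]$; this matches exactly what the BDG square-function route produces, which is why the geometric-series summation goes through cleanly.
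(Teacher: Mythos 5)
Your proposal is correct and follows essentially the same route as the paper's proof: the martingale-difference observation, orthogonality for $p=2$, Burkholder/Pinelis plus Minkowski for general $p$, the per-step conditional bound $\PE[\norm{D_k}^2\mid\F_k]\le \L^2\norm{\theta_k-\thetas}^2$ via co-coercivity and smoothness, and finally the geometric summation using \Cref{ass:stationary_moments_bounds}. The only differences are cosmetic (explicit Pythagorean identity, the $\sqrt{p}$ versus $p$ constant, and the observation that $1/(\gamma\mu)^{1/2}\le 1/(\gamma\mu)$), none of which change the argument.
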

\begin{proof}
Since $\{ \noise{k+1}(\theta_k) - \noise{k+1}(\thetas)\}$ is a martingale-difference sequence with respect to $\F_k$, we have 
\begin{align*}
\PE_{\nu}\bigl[\norm{\sum_{k=n+1}^{2n}\{\noise{k+1}(\theta_k) - \noise{k+1}(\thetas)\}}^2\bigr] &= \sum_{k=n+1}^{2n} \PE_{\nu}\bigl[\norm{\{\noise{k+1}(\theta_k) - \noise{k+1}(\thetas)\}}^2\bigr]\eqsp.
\end{align*}
where $\noise{k+1}(\thetas) = \nabla F(\thetas,\xi_{k+1})$ uses the same noise variable $\xi_{k+1}$ as $F(\theta_{k},\xi_{k+1})$. 
Note that 
\begin{multline*}
\PE_{\nu}[\norm{\noise{k+1}(\theta_k) - \noise{k+1}(\thetas)}^2]  
= \PE_{\nu}[\norm{\nabla F(\theta_k, \xi_{k+1})-\nabla F(\thetas, \xi_{k+1})}^2 \\
 \qquad - 2  \PE_{\nu} \bigl[ \langle \nabla F(\theta_k, \xi_{k+1})-\nabla F(\thetas, \xi_{k+1}), \nabla f(\theta_k)-\nabla f(\thetas)\rangle \bigr] + \norm{\nabla f(\theta_k)-\nabla f(\thetas)}^2]\eqsp.
\end{multline*}
Using \Cref{ass:L-smooth}, \Cref{ass:rand_noise}($2$), and taking conditional expectation with respect to $\mathcal{F}_k$, we obtain 
\begin{align*}
\PE_{\nu}[\norm{\noise{k+1}(\theta_k) - \noise{k+1}(\thetas)}^2] 
&\leq  \PE_{\nu} [\L \langle \nabla f(\theta_k)-\nabla f(\thetas), \theta_k - \thetas \rangle - \norm{\nabla f(\theta_k)-\nabla f(\thetas)}^2] \\ 
&\leq \L^2 \PE_{\nu}[\norm{\theta_k - \thetas}^2].
\end{align*}
Thus, we obtain that 
\begin{align*}
\PE_{\nu}[\norm{\sum_{k=n+1}^{2n}\{\noise{k+1}(\theta_k) - \noise{k+1}(\thetas)\}}^2] \leq \L^2 \sum_{k=n+1}^{2n}\PE_{\nu}[\norm{\theta_k-\thetas}^2]\eqsp,
\end{align*}
and the statement \eqref{eq:2_nd_moment_bound_eps} follows from the assumption \Cref{ass:stationary_moments_bounds}($2$). In order to prove \eqref{eq:p_th_moment_bound_eps}, we apply Burkholder's inequality \citet[Theorem~8.6]{osekowski:2012} and obtain 
\begin{align*}
&\PE_{\nu}^{1/p}[\norm{\sum_{k=n+1}^{2n}\{\noise{k+1}(\theta_k) - \noise{k+1}(\thetas)\}}^p] \leq p \PE_{\nu}^{1/p}\bigl[ \bigl(\sum_{k=n+1}^{2n}\norm{\noise{k+1}(\theta_k) - \noise{k+1}(\thetas)}^2\bigr)^{p/2} \bigr] \\
& \qquad \leq p \bigl( \sum_{k=n+1}^{2n} \PE_{\nu}^{2/p}\bigl[\norm{\noise{k+1}(\theta_k) - \noise{k+1}(\thetas)}^{p}\bigr] \bigr)^{1/2} \\
& \qquad \lesssim p \L \bigl( \sum_{k=n+1}^{2n} \PE^{2/p}_{\nu}\bigl[\norm{\theta_k - \thetas}^{p}\bigr] \bigr)^{1/2} \\
& \qquad \overset{(a)}{\lesssim} \frac{\L \Constlast{p}^{1/2} \sqrt{\gamma n} p \tau_{p}}{\mu^{1/2}} + \frac{\L p (1-\gamma \mu)^{(n+1)/2}}{\mu^{1/2} \gamma^{1/2}} \PE^{1/p}_{\nu}[\norm{\theta_0 - \thetas}^{p}] \eqsp,
\end{align*}
where in (a) we have additionally used \Cref{ass:stationary_moments_bounds}($p$). 
\end{proof}

\section{Proof of \Cref{th:RR_second_moment}}
\label{th:RR_second_moment_proof}
Within this section we often use the definition of the function $\psi: \rset^{d} \to \rset^{d}$ from \eqref{eq:psi_theta_definition}:
\begin{equation}
\label{eq:psi_function_appendix}
\psi(\theta) = (1/2) \nabla^{3}f(\theta^*)(\theta - \thetas)^{\otimes 2}
\end{equation}

\begin{theorem}[Version of \Cref{th:RR_second_moment} with explicit constants]
\label{th:RR_second_moment_explicit_constants}
Assume \Cref{ass:mu-convex}, \Cref{ass:L-smooth}, \Cref{ass:rand_noise}($6$), and \Cref{ass:stationary_moments_bounds}($6$). Then for any $\gamma \in (0, 1/(\L \Conststep{6}) \wedge 2/(11 \L)]$, initial distribution $\nu$, and $n \in \nset$, the Richardson-Romberg estimator $\prtheta_{n}^{(RR)}$ defined in \eqref{eq:theta_RR_estimator} satisfies
\begin{align*}
\PE_{\nu}^{1/2}[\norm{\H(\prtheta_{n}^{(RR)} -\thetas)}^2] &\leq \frac{\sqrt{\trace{\noisecov}}}{n^{1/2}} + \frac{\ConstRR{1} \gamma^{1/2}}{n^{1/2}} + \frac{\ConstRR{2}}{\gamma^{1/2}n} + \ConstRR{3} \gamma^{3/2} + \frac{\ConstRR{4} \gamma }{n^{1/2}} \\ 
&\qquad \qquad + \Rem_4(n, \gamma, \norm{\theta_0-\thetas})\eqsp,
\end{align*}
where we have set 
\begin{equation}
\label{eq:const_rr_1_4_def}
\begin{split}
\ConstRR{1} &= \frac{c_0 \Constlast{4} \L \tau_4^2}{\mu^{3/2}} + \frac{c_0 \L \Constlast{2}^{1/2} \tau_2}{\mu^{1/2}}\eqsp, \quad 
\ConstRR{2} = \frac{c_0 \Constlast{2}^{1/2} \tau_2}{\mu^{1/2}} \\
\ConstRR{3} &= c_0 \left(\frac{\L \Constlast{6}^{3/2} \tau_6^3}{\mu^{3/2}} + \ConstPR{1}\right)\eqsp, \quad 
\ConstRR{4} = \frac{c_0 \Constlast{4} \L \tau_4^2}{\mu}\eqsp,
\end{split}
\end{equation}
and the remainder term $\Rem_4(n, \gamma, \norm{\theta_0-\thetas})$ is given by 
\begin{multline}
\label{eq:exponential_small_term_RR}
\Rem_4(n, \gamma, \norm{\theta_0-\thetas}) = \frac{c_0 \L (1-\gamma\mu)^{(n+1)/2}}{n\gamma \mu} \\
\times \left(\PE_{\nu}^{1/2}[\norm{\theta_0-\thetas}^6] + \PE_{\nu}^{1/2}[\norm{\theta_0-\thetas}^4] + \PE_{\nu}^{1/2}[\norm{\theta_0-\thetas}^2] + \frac{\Constlast{4} \gamma\tau_4^2}{\mu}\right)\eqsp.
\end{multline}
\end{theorem}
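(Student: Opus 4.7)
The plan is to apply the decomposition \eqref{eq:PR-decomposition-error-extended} separately to both chains $\{\theta_k^{(\gamma)}\}$ and $\{\theta_k^{(2\gamma)}\}$ and then form the linear combination
\[
\H(\prtheta_{n}^{(RR)} - \thetas) = 2\H(\prtheta_{n}^{(\gamma)} - \thetas) - \H(\prtheta_{n}^{(2\gamma)} - \thetas).
\]
The key algebraic observation is that, since the noise sequence $\{\xi_k\}_{k \in \nset}$ is shared between the two chains and $\thetas$ does not depend on the step size, the leading martingale $W = n^{-1}\sum_{k=n+1}^{2n}\noise{k+1}(\thetas)$ is \emph{identical} in both decompositions; the combination therefore reproduces $W$ with coefficient $2-1=1$, and its second moment equals $\trace{\noisecov}/n$, producing the leading term of the bound. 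All remaining terms are grouped by type and controlled through Minkowski's inequality.

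For the boundary terms $(\theta_{n+1}^{(\gamma)} - \thetas)/(\gamma n)$ and $(\theta_{2n}^{(\gamma)} - \thetas)/(\gamma n)$ (and their $2\gamma$ analogues), I would invoke \Cref{ass:stationary_moments_bounds}($2$) to recover the $\ConstRR{2}/(\gamma^{1/2}n)$ term together with the exponentially decaying part of $\Rem_4$. For the noise-difference sums $\sum_{k}\{\noise{k+1}(\theta_k^{(\gamma)}) - \noise{k+1}(\thetas)\}$, \Cref{lem:error_bound_second_moment} yields an $O(\L \sqrt{\gamma n})$ bound that, divided by $n$, feeds into $\ConstRR{1}\gamma^{1/2}/n^{1/2}$. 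For the cubic remainder $n^{-1}\sum_{k}G(\theta_k^{(\gamma)})$, the pointwise estimate $\|G(\theta)\| \lesssim \L\|\theta-\thetas\|^3$ together with the stationary $6$th moment bound from \Cref{ass:stationary_moments_bounds}($6$) produces an $O(\L\gamma^{3/2})$ contribution, absorbed into $\ConstRR{3}\gamma^{3/2}$.

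The crux of the argument is the quadratic remainder $n^{-1}\sum_{k}\psi(\theta_k^{(\gamma)})$, which I would split into its centered part plus the stationary mean,
\[
\frac{1}{n}\sum_{k=n+1}^{2n}\psi(\theta_k^{(\gamma)}) = \frac{1}{n}\sum_{k=n+1}^{2n}\bigl\{\psi(\theta_k^{(\gamma)}) - \pi_\gamma(\psi)\bigr\} + \pi_\gamma(\psi),
\]
and analogously for the $2\gamma$ chain. The centered part is controlled per chain by the Rosenthal-type \Cref{prop:psi_p_moment_bound} specialized to $p=2$, yielding an $O(\sqrt{\gamma/n})$ bound and hence another contribution to $\ConstRR{1}\gamma^{1/2}/n^{1/2}$. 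Since \Cref{prop:psi_p_moment_bound} is stated under the stationary initial law $\pi_\gamma$, an additional coupling argument exploiting the geometric ergodicity of \Cref{lem:Wasserstein_ergodicity} in the weighted semi-metric $c$ is needed to transport the bound to an arbitrary initial law $\nu$; the resulting mismatch contributes only exponentially small terms, absorbed into $\Rem_4$.

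The Richardson--Romberg cancellation then takes place in the deterministic biases $2\pi_\gamma(\psi) - \pi_{2\gamma}(\psi)$. Since $\psi(\theta) = (1/2)\nabla^{3}f(\thetas)(\theta-\thetas)^{\otimes 2}$, we have $\pi_\gamma(\psi) = (1/2)\nabla^{3}f(\thetas)\covgammatheta$, and the expansion $\covgammatheta = \gamma \mathbf{T}\calC(\thetas) + O(\gamma^{3/2})$ from \Cref{lem:bias_decomposition} yields
\[
2\pi_\gamma(\psi) - \pi_{2\gamma}(\psi) = (2\gamma - 2\gamma)\cdot (1/2)\nabla^{3}f(\thetas)\mathbf{T}\calC(\thetas) + O(\gamma^{3/2}) = O(\gamma^{3/2}),
\]
so the $O(\gamma)$ bias that would otherwise block the $\mathcal{O}(n^{-3/4})$ second-order rate cancels exactly, leaving only an $O(\gamma^{3/2})$ contribution. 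The main difficulty of the proof is thus the simultaneous handling of the Rosenthal-type control of the centered $\psi$ sum under the non-stationary initialization $\nu$ and the careful bookkeeping of this deterministic $O(\gamma)$ cancellation; once these are in place, collecting all pieces through Minkowski's inequality yields the claimed bound.
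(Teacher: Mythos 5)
Your proposal is correct and follows the same overall architecture as the paper's proof: the same Richardson--Romberg decomposition built from \eqref{eq:summ_parts}, the same observation that the shared noise makes the leading martingale $W$ appear with coefficient $2-1=1$, and the same term-by-term treatment of the boundary terms (via \Cref{ass:stationary_moments_bounds}), the noise differences (via \Cref{lem:error_bound_second_moment}), the cubic remainder $G$, and the $O(\gamma^{3/2})$ cancellation in $2\pi_\gamma(\psi)-\pi_{2\gamma}(\psi)$. The one step where you genuinely diverge is the centered quadratic sum $n^{-1}\sum_k\{\psi(\theta_k^{(\gamma)})-\pi_\gamma(\psi)\}$: you invoke the Rosenthal-type \Cref{prop:psi_p_moment_bound} at $p=2$ (plus a coupling transfer to $\nu$), whereas the paper reserves that Poisson-equation/blocking machinery for the $p$-th moment result and instead handles $p=2$ by a direct stationary covariance expansion, bounding the cross-correlations $\PE_{\pi_\gamma}[(\psi(\theta_0)-\pi_\gamma(\psi))^\top(\psi(\theta_k)-\pi_\gamma(\psi))]$ through the geometric ergodicity of \Cref{lem:Wasserstein_ergodicity} and the $c$-Lipschitz property of $\psi$ (\Cref{lem: Lipschitzness_remainder}); the transfer to arbitrary $\nu$ is then done exactly as you describe, by synchronous coupling. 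Your route is legitimate (the $p=2$ instance of \Cref{prop:psi_p_moment_bound} only needs \Cref{ass:rand_noise}($4$) and \Cref{ass:stationary_moments_bounds}($4$), which are implied here), but it produces an extra additive $O(\L\Constlast{4}\tau_4/(\mu^2 n))$ term absent from the stated bound; since $1\leq\gamma^{-1/2}$ it can be folded into the $\ConstRR{2}/(\gamma^{1/2}n)$ term at the cost of altering $\ConstRR{2}$, so to reproduce the constants \eqref{eq:const_rr_1_4_def} exactly you would need the paper's direct covariance argument. Your treatment of the deterministic bias via $\pi_\gamma(\psi)=(1/2)\nabla^3 f(\thetas)\covgammatheta$ and \eqref{eq:seconddev} is also a slightly more direct route than the paper's detour through $\H(\bgammatheta-\thetas)=\pi_\gamma(\psi)+\pi_\gamma(G)$ and \Cref{prop:bias_PR}, and yields the same $O(\gamma^{3/2})$ conclusion.
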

\begin{proof}
Using the recursion \eqref{eq:summ_parts}, we obtain that 
\begin{align}
\label{eq: decomposition_RR}
\H(\prtheta_{n}^{(RR)}-\thetas) = \frac{2(\theta_{n+1}^{(\gamma)}-\thetas)}{\gamma n} - \frac{2(\theta_{2n}^{(\gamma)} -\thetas)}{\gamma n} - \frac{\theta_{n+1}^{(2\gamma)} -\thetas}{2\gamma n} + \frac{\theta_{2n}^{(2\gamma)} -\thetas}{2\gamma n} \notag \\ - \frac{1}{n}\sum_{k=n+1}^{2n}[2\noise{k+1}(\theta_k^{(\gamma)})-\noise{k+1}(\theta_k^{(2\gamma)})] -\frac{1}{n}\sum_{k=n+1}^{2n}[2\eta(\theta_k^{(\gamma)}) - \eta(\theta_k^{(2\gamma)})]\eqsp.
\end{align}
Therefore, applying Minkowski's inequality to the decomposition \eqref{eq: decomposition_RR}, we obtain for any initial distribution $\nu$ that 
\begin{align*}
\PE_{\nu}^{1/2}[\norm{ \H (\prtheta_{n}^{(RR)}-\thetas)}^2] 
&\leq \underbrace{\frac{1}{n}\PE_{\nu}^{1/2}[\norm{\sum_{k=n+1}^{2n}\noise{k+1}(\thetas)}^2]}_{T_1} + \underbrace{\frac{2}{\gamma n}\PE_{\nu}^{1/2}[\norm{\theta_{n+1}^{(\gamma)}-\thetas}^2] + \frac{2}{\gamma n} \PE_{\nu}^{1/2}[\norm{\theta_{2n+1}^{(\gamma)}-\thetas}^2]}_{T_2} \\ 
&+ \underbrace{\frac{1}{2\gamma n}\PE_{\nu}^{1/2}[\norm{\theta_{n+1}^{(2\gamma)}-\thetas}^2] + \frac{1}{2\gamma n}\PE_{\nu}^{1/2}[\norm{\theta_{2n+1}^{(2\gamma)}-\thetas}^2]}_{T_3} \\ 
&+ \underbrace{\frac{2}{n}\PE_{\nu}^{1/2}[\norm{\sum_{k=n+1}^{2n}\noise{k+1}(\theta_k^{(\gamma)}) - \noise{k+1}(\thetas)}^2]}_{T_4} \\
&+  \underbrace{\frac{1}{n}\PE_{\nu}^{1/2}[\norm{\sum_{k=n+1}^{2n}\noise{k+1}(\theta_k^{(2\gamma)}) - \noise{k+1}(\thetas)}^2]}_{T_5} + \underbrace{\norm{2\pi_{\gamma}(\psi) - \pi_{2\gamma}(\psi)}}_{T_6}\\
& + \underbrace{\frac{2}{n}\PE_{\nu}^{1/2}[\norm{\sum_{k=n+1}^{2n}\eta(\theta_k^{(\gamma)}) - \pi_{\gamma}(\psi)}^2] + \frac{1}{n}\PE_{\nu}^{1/2}[\norm{\sum_{k=n+1}^{2n}\eta(\theta_k^{(2\gamma)}) - \pi_{2\gamma}(\psi)}^2]}_{T_7}\eqsp.
\end{align*}
Now we upper bound the terms in the right-hand side of the above bound separately. First, we note that 
\begin{align*}
T_1 = \frac{\sqrt{\trace{\noisecov}}}{\sqrt{n}}\eqsp.
\end{align*}
Using \Cref{ass:stationary_moments_bounds}($2$), we get
\begin{align*}
	T_2 + T_3 \lesssim \frac{(1-\gamma\mu)^{{(n+1)}/2}}{\gamma n}\PE_{\nu}^{1/2}[\norm{\theta_0-\thetas}^2] + \frac{\Constlast{2}^{1/2} \tau_2}{\mu^{1/2}\gamma^{1/2}n}\eqsp.
\end{align*}
Applying \Cref{lem:error_bound_second_moment}, we get 
\begin{align*}
T_4 + T_5 \lesssim \frac{\L \Constlast{2}^{1/2} \gamma^{1/2} \tau_2}{\mu^{1/2} n^{1/2}} + \frac{\L (1-\gamma\mu)^{(n+1)/2}}{\mu \gamma n} \PE_{\nu}^{1/2}\bigl[\norm{\theta_0-\thetas}^2\bigr]\eqsp.
\end{align*}
Now we proceed with the term $T_6$. Applying the recurrence \eqref{eq:sgd_recursion}, we obtain that 
\begin{equation}
\label{eq:one-step-expanded-recur}
\theta_{1}^{(\gamma)} - \thetas = (\Id - \gamma \H)(\theta_0^{(\gamma)} - \thetas) - \gamma \noise{1}(\theta_0^{(\gamma)}) -\gamma \eta(\theta_0^{(\gamma)})\eqsp.
\end{equation}
Thus, taking expectation w.r.t. $\pi_{\gamma}$ in both sides above, we get 
\[
\H (\bar{\theta}_{\gamma} - \thetas) = \PE_{\pi_{\gamma}}\bigl[\eta(\theta_0^{(\gamma)})\bigr] = \pi_{\gamma}(\psi) + \pi_{\gamma}(G)\eqsp,
\]
where $G(\theta)$ is defined in \eqref{eq:psi_theta_definition} and writes as
\[
G(\theta) =  \frac{1}{2} \left(\int_{0}^{1}t^2\nabla^{4}f(t\thetas + (1-t)\theta)\,dt\right) (\theta - \thetas)^{\otimes 3}\eqsp.
\]
Hence, applying \Cref{ass:L-smooth} together with \Cref{prop:bias_PR}, we obtain that
\begin{align}
\label{eq:T_6_bound}
T_6 = \norm{ 2\pi_{\gamma}(\psi) -  \pi_{2\gamma}(\psi)} \lesssim \ConstPR{1} \gamma^{3/2} \eqsp.
\end{align}
Finally, using \Cref{lem:eta_second_moment_bound}, \Cref{lem:arbitrary_init_second_moment_bound_residial_term}, and \Cref{lem:stationary_second_moment_bound_residial_term}, we obtain that 
\begin{align*}
T_7 & \lesssim \frac{\Constlast{4} \L \gamma \tau_4^2}{\mu n^{1/2}} +  \frac{\Constlast{4} \L \gamma^{1/2}\tau_4^2}{\mu^{3/2}n^{1/2}} + \frac{\L \Constlast{6}^{3/2} \gamma^{3/2}\tau_6^3}{\mu^{3/2}} \\
& \qquad + \frac{\L(1-\gamma\mu)^{(n+1)/2}}{n\gamma\mu}\left(\PE_{\nu}^{1/2}[\norm{\theta_0-\thetas}^6] + \PE^{1/2}_{\nu}[\norm{\theta_0-\thetas}^4] + \frac{\Constlast{4} \gamma \tau_4^2}{\mu}\right) \eqsp.
\end{align*}
Combining the bounds above completes the proof.
\end{proof}

Below we provide some auxiliary technical lemmas. 

\begin{lemma}
\label{lem:stationary_second_moment_bound_residial_term}
Assume \Cref{ass:mu-convex}, \Cref{ass:L-smooth}, \Cref{ass:rand_noise}($4$), and \Cref{ass:stationary_moments_bounds}($4$). Then for any $\gamma \in (0; 1/(\L \Conststep{4})]$ and any $n \in \nset$ it holds
\begin{equation}
n^{-1}\PE^{1/2}_{\pi_\gamma}\bigl[\norm{\sum_{k=n+1}^{2n}\{\psi(\theta_k) - \pi_{\gamma}(\psi)\}}^2\bigr] \lesssim  \frac{\Constlast{4} \L_3\gamma \tau_4^2}{\mu n^{1/2}} +  \frac{\Constlast{4} \L_3 \gamma^{1/2}\tau_4^2}{\mu^{3/2}n^{1/2}}\eqsp.
\end{equation}
\end{lemma}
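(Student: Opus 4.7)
The plan is to exploit stationarity and the weighted Wasserstein ergodicity from \Cref{lem:Wasserstein_ergodicity} to control the second moment of the centred partial sum directly. Set $\tilde\psi(\theta) = \psi(\theta) - \pi_{\gamma}(\psi)$ and $S_n = \sum_{k=n+1}^{2n}\tilde\psi(\theta_k)$. By stationarity of $\{\theta_k^{(\gamma)}\}$ under $\pi_{\gamma}$,
\[
\PE_{\pi_{\gamma}}[\norm{S_n}^2] = n\,\PE_{\pi_{\gamma}}[\norm{\tilde\psi(\theta_0)}^2] + 2\sum_{k=1}^{n-1}(n-k)\,\PE_{\pi_{\gamma}}[\ps{\tilde\psi(\theta_0)}{\MK_{\gamma}^{k}\tilde\psi(\theta_0)}]\eqsp,
\]
so the task splits into a diagonal variance and a lag-$k$ covariance to be controlled via the Wasserstein contraction.

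The central ingredient is a Lipschitz-type estimate for $\psi$ relative to the cost function $c$ from \eqref{eq:function_c_def}. Using the tensor identity $a^{\otimes 2} - b^{\otimes 2} = (a-b)\otimes a + b\otimes(a-b)$ together with $\norm{\nabla^3 f(\thetas)}\leq \L$, I would obtain
\[
\norm{\psi(\theta) - \psi(\theta')} \leq (\L/2)\norm{\theta-\theta'}\bigl(\norm{\theta-\thetas} + \norm{\theta'-\thetas}\bigr) \leq (\L/2)\,c(\theta,\theta')\eqsp.
\]
Combined via linear-functional duality on any coupling $\mathcal{C} \in \mathscr{C}(\delta_{\theta}\MK_{\gamma}^{k}, \pi_{\gamma})$ with \Cref{lem:Wasserstein_ergodicity}, this yields the pointwise decay
\[
\norm{\MK_{\gamma}^{k}\tilde\psi(\theta)} \leq (\L/2)\,\Wass{c}(\delta_{\theta}\MK_{\gamma}^{k}, \pi_{\gamma}) \leq 2\L\,(1/2)^{k/m(\gamma)}\int c(\theta,\theta')\,\pi_{\gamma}(\rmd\theta')\eqsp.
\]

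With this in hand, I would perform the required moment computations. First, $\norm{\tilde\psi(\theta)}\leq (\L/2)(\norm{\theta-\thetas}^2 + \pi_{\gamma}(\norm{\cdot-\thetas}^2))$, whence $\PE_{\pi_{\gamma}}^{1/2}[\norm{\tilde\psi}^2] \lesssim \L\,\Constlast{4}\gamma\tau_4^2/\mu$ by \Cref{ass:stationary_moments_bounds}($4$). Second, by Jensen's inequality on the inner integral followed by Cauchy--Schwarz on the product form of $c^2$ and the fourth stationary moment bound,
\[
\PE_{\pi_{\gamma}}^{1/2}\Bigl[\bigl(\textstyle\int c(\theta,\theta')\pi_{\gamma}(\rmd\theta')\bigr)^2\Bigr] \leq \PE_{\pi_{\gamma}\otimes\pi_{\gamma}}^{1/2}[c(\theta,\theta')^2] \lesssim \Constlast{4}\gamma\tau_4^2/\mu\eqsp.
\]
Feeding these into Cauchy--Schwarz gives $\absLigne{\PE_{\pi_{\gamma}}[\ps{\tilde\psi(\theta_0)}{\MK_{\gamma}^{k}\tilde\psi(\theta_0)}]} \lesssim \L^2\Constlast{4}^2\gamma^2\tau_4^4/\mu^2\cdot(1/2)^{k/m(\gamma)}$. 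Summing via $\sum_{k\geq 0}(1/2)^{k/m(\gamma)} \lesssim m(\gamma) \lesssim 1/(\gamma\mu)$ produces an off-diagonal contribution of order $n\,\L^2\Constlast{4}^2\gamma\tau_4^4/\mu^3$, while the diagonal contribution is of order $n\,\L^2\Constlast{4}^2\gamma^2\tau_4^4/\mu^2$. Taking square roots via $\sqrt{a+b}\leq\sqrt{a}+\sqrt{b}$ and dividing by $n$ recovers the two claimed terms.

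\textbf{Main obstacle.} The delicate point is obtaining the correct $\gamma$-scaling of $\PE_{\pi_{\gamma}}^{1/2}[(\int c(\theta,\theta')\pi_{\gamma}(\rmd\theta'))^2]$. A naive use of the crude bound \eqref{eq:moment_stationary} would only give $\sqrt{\gamma}$ and break the sharp dependence on $\gamma$ in the second claimed term, which is exactly what produces the $n^{-3/4}$ rate in \Cref{th:RR_second_moment}. The resolution is to push the square inside the integral via Jensen, exploit the factored structure of $c^2$, and then use the genuine fourth-order stationary moment from \Cref{ass:stationary_moments_bounds}($4$) rather than the second-order one.
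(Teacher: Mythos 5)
Your proposal is correct and follows essentially the same route as the paper: expand the stationary second moment into a diagonal variance plus lag-$k$ covariances, control the lag-$k$ term via the Lipschitz bound $\norm{\psi(\theta)-\psi(\theta')}\leq (\L_3/2)c(\theta,\theta')$ (the paper's \Cref{lem: Lipschitzness_remainder}) combined with the $\Wass{c}$-contraction of \Cref{lem:Wasserstein_ergodicity}, and then bound both $\PE_{\pi_\gamma}\norm{\tilde\psi}^2$ and $\iint c^2\,d\pi_\gamma\,d\pi_\gamma$ using the fourth stationary moment from \Cref{ass:stationary_moments_bounds}($4$). The only cosmetic differences are that you carry $\L$ rather than $\L_3$ (harmless since $\L\geq\L_3$), and you are a bit more careful than the paper about $\Constlast{4}^2$ versus $\Constlast{4}$ in the intermediate bounds; both are absorbed by $\lesssim$.
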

\begin{proof}
Using the fact that $\pi_{\gamma}$ is a stationary distribution, we obtain that 
\begin{multline*}
\PE_{\pi_\gamma}\bigl[\norm{\sum_{k=n+1}^{2n}\{\psi(\theta_k) - \pi_{\gamma}(\psi)\}}^2\bigr] = n\PE_{\pi_{\gamma}}[\norm{\psi(\theta_0) - \pi_{\gamma}(\psi)}^2] \\
+ \sum_{k=1}^{n-1}(n-k)\PE_{\pi_{\gamma}}[(\psi(\theta_0) - \pi_{\gamma}(\psi))^T(\psi(\theta_{k}) - \pi_{\gamma}(\psi))]
\end{multline*}
Using the Markov property, Cauchy–Schwartz inequality, \Cref{lem:Wasserstein_ergodicity}, and \Cref{lem: Lipschitzness_remainder}, we obtain 
\begin{equation}
\label{eq: cov_decomposition}
\begin{split}
& \PE_{\pi_{\gamma}}[(\psi(\theta_0) - \pi_{\gamma}(\psi))^T(\psi(\theta_{k}) - \pi_{\gamma}(\psi))] \\
&\qquad = \PE_{\pi_{\gamma}}[(\psi(\theta_0) - \pi_{\gamma}(\psi))^T(\MK_{\gamma}^k\psi(\theta_0) - \pi_{\gamma}(\psi))]  \\ 
& \qquad \overset{(a)}{\lesssim} (1/2)^{k/m(\gamma)} \L_3 \PE_{\pi_{\gamma}}\bigl[\norm{\psi(\theta_0) - \pi_{\gamma}(\psi)}\int c(\theta_0, \vartheta)\rmd \pi_{\gamma}(  \vartheta)\bigr]\eqsp,
\end{split}
\end{equation}
where in (a) we additionally used the fact that
\[
\Wass{c}(\delta_{\theta_0}, \pi_{\gamma}) = \int c(\theta_0, \vartheta) \rmd \pi_{\gamma}(\vartheta)\eqsp.
\]
Using \Cref{ass:stationary_moments_bounds}($4$), we get 
\begin{equation}
    \label{eq: variance_bound}
    \PE_{\pi_{\gamma}}[\norm{\psi(\theta_0) - \pi_{\gamma}}^2] \leq \PE_{\pi_{\gamma}}[\norm{\psi(\theta_0)}^2] \leq \L_3^2 \PE_{\pi_{\gamma}}[\norm{\theta_0 - \thetas}^4] \leq
    \frac{\L_3^2 \Constlast{4} \gamma^2 \tau_{4}^4}{\mu^2}\eqsp,
\end{equation}
and, using \Cref{ass:stationary_moments_bounds}($2$) and \Cref{ass:stationary_moments_bounds}($4$), we get
\begin{align}
\label{eq: moment_cost_function_bound}
& \int \int c^2(\theta_0, \vartheta)d\pi_{\gamma}(\vartheta)d\pi_{\gamma}(\theta_0)  \\ & \leq \int \int \norm{\theta_0 -\vartheta}^2\left(\norm{\theta_0-\thetas} + \norm{\vartheta-\thetas} + \frac{2^{3/2}\gamma^{1/2}\tau_2}{\mu^{1/2}}\right)^2 d\pi_{\gamma}(\vartheta)d\pi_{\gamma}(\theta_0) \\ 
& \lesssim \int \int (\norm{\theta_0-\thetas}^4 + \norm{\vartheta-\thetas}^4) + \frac{\gamma\tau_2^2}{\mu}(\norm{\theta_0-\thetas}^2 + \norm{\vartheta-\thetas}^2) d\pi_{\gamma}(\vartheta)d\pi_{\gamma}(\theta_0) \\
& \lesssim \frac{\Constlast{4} \gamma^2\tau_4^2}{\mu^2} + \frac{\Constlast{2}\gamma^2\tau_2^4}{\mu^2} \lesssim \frac{\Constlast{4} \gamma^2\tau_4^4}{\mu^2}\eqsp.
\end{align}
Using \eqref{eq: variance_bound}, \eqref{eq: moment_cost_function_bound}, and Cauchy–Schwartz inequality for \eqref{eq: cov_decomposition}, we obtain 
\begin{align*}
\PE_{\pi_{\gamma}}[(\psi(\theta_0) - \pi_{\gamma}(\psi))^T(\psi(\theta_{k}) - \pi_{\gamma}(\psi))] \lesssim (1/2)^{k/m(\gamma)} \frac{\L_{3} \Constlast{4} \gamma^2\tau_4^4}{\mu^2}\eqsp.
\end{align*}
Combining the inequalities above and using that $m(\gamma) = \lceil 2 \frac{\log 4}{\gamma \mu}\rceil \leq \frac{2\log 4 + 1}{\gamma \mu}$, we get 
\begin{align*}
n^{-1}\PE^{1/2}_{\pi_\gamma}\bigl[\norm{\sum_{k=n+1}^{2n}\{\psi(\theta_k) - \pi_{\gamma}(\psi)\}}^2\bigr]  
&\leq  \left(\frac{\Constlast{4} \L_3^2\gamma^2 \tau_4^4}{\mu^2n} + \frac{\Constlast{4} m(\gamma)\L_3^{2} \gamma^2\tau_4^4}{\mu^2n}\right)^{1/2} \\
&\lesssim \frac{\Constlast{4} \L_3\gamma \tau_4^2}{\mu n^{1/2}} +  \frac{\Constlast{4} \L_3 \gamma^{1/2}\tau_4^2}{\mu^{3/2}n^{1/2}}\eqsp.
\end{align*}
\end{proof}

\begin{lemma}
\label{lem: coupling_4_moment}
Assume \Cref{ass:mu-convex}, \Cref{ass:L-smooth}, \Cref{ass:rand_noise}($4$). Then for any $\gamma \in (0; \frac{2}{11 \L}]$, and any $k \in \nset$ it holds that 
\begin{equation}
\PE[\norm{\theta_{k+1} - \tilde{\theta}_{k+1}}^4|\F_k] \leq (1-\gamma \mu)^2\norm{\theta_k-\tilde{\theta}_k}^4.
\end{equation}
Moreover, let $p \geq 2$ and assume  \Cref{ass:mu-convex}, \Cref{ass:L-smooth}, and \Cref{ass:rand_noise}($2p$). Then for any $\gamma \in (0, \frac{p}{4\cdot 3^p \L}]$ and any $k \in \nset$ it holds that 
\begin{equation}
    \PE[\norm{\theta_{k+1} - \tilde{\theta}_{k+1}}^{2p}|\F_k] \leq (1-\gamma\mu)^p\norm{\theta_k -\tilde{\theta}_k}^{2p}\eqsp.
\end{equation}
\end{lemma}
\begin{proof}
Recall that the sequences $\{\theta_k\}_{k \in \nset}$ and $\{\tilde{\theta}_k\}_{k \in \nset}$ are defined by the recurrences 
\begin{align}
\theta_{k+1} &= \theta_{k} - \gamma \nabla F(\theta_{k}, \xi_{k+1})\eqsp, \quad \theta_{0} = \theta \in \rset^{d}\eqsp, \\
\tilde{\theta}_{k+1} &= \tilde{\theta}_{k} - \gamma \nabla F(\tilde{\theta}_{k}, \xi_{k+1})\eqsp, \quad \tilde{\theta}_{0} = \tilde{\theta} \in \rset^{d}\eqsp.
\end{align}

Expanding the brackets, we obtain that 
\begin{align*}
&\norm{\theta_{k+1} - \tilde{\theta}_{k+1}}^4 = \norm{\theta_k-\tilde{\theta}_k}^4 + \gamma^4\norm{\nabla F(\theta_k, \xi_{k+1}) - \nabla F(\tilde{\theta}_k, \xi_{k+1})}^4 \\& + 4\gamma^2\langle \nabla F(\theta_k, \xi_{k+1}) - \nabla F(\tilde{\theta}_k, \xi_{k+1}), \theta_k - \tilde{\theta}_k \rangle^2  \\& + 2\gamma^2 \norm{\nabla F(\theta_k, \xi_{k+1}) - \nabla F(\tilde{\theta}_k, \xi_{k+1})}^2\norm{\theta_k - \tilde{\theta}_k}^2 \\& -4\gamma\langle \nabla F(\theta_k, \xi_{k+1}) - \nabla F(\tilde{\theta}_k, \xi_{k+1}), \theta_k - \tilde{\theta}_k \rangle\norm{\theta_k-\tilde{\theta}_k}^2 \\& - 4\gamma^3\langle \nabla F(\theta_k, \xi_{k+1}) - \nabla F(\tilde{\theta}_k, \xi_{k+1}), \theta_k - \tilde{\theta}_k \rangle \norm{\nabla F(\theta_k, \xi_{k+1}) - \nabla F(\tilde{\theta}_k, \xi_{k+1})}^2
\end{align*}
Using \Cref{ass:rand_noise}($4$) and Cauchy–Schwartz inequality, we get
\begin{align*}
\PE[\norm{\nabla F(\theta_k, \xi_{k+1}) - \nabla F(\tilde{\theta}_k, \xi_{k+1})}^4| \F_k]  &\leq \L^3 \langle \nabla f (\theta_k) - \nabla f (\tilde{\theta}_k), \theta_k - \tilde{\theta}_k \rangle \norm{\theta_k - \tilde{\theta}_k}^2, \\
\PE[\langle \nabla F(\theta_k, \xi_{k+1}) - \nabla F(\tilde{\theta}_k, \xi_{k+1}), \theta_k - \tilde{\theta}_k \rangle^2| \F_k]  &\leq \L\langle \nabla f (\theta_k) - \nabla f (\tilde{\theta}_k), \theta_k - \tilde{\theta}_k \rangle \norm{\theta_k - \tilde{\theta}_k}^2, \\
\PE[\norm{\nabla F(\theta_k, \xi_{k+1}) - \nabla F(\tilde{\theta}_k, \xi_{k+1})}^2\norm{\theta_k - \theta'_k}^2|\F_k ]  &\leq  \L \langle \nabla f (\theta_k) - \nabla f (\tilde{\theta}_k), \theta_k - \tilde{\theta}_k \rangle \norm{\theta_k - \tilde{\theta}_k}^2 \\  
\PE[\langle \nabla F(\theta_k, \xi_{k+1}) - \nabla F(\tilde{\theta}_k, \xi_{k+1}), \theta_k - \tilde{\theta}_k \rangle\norm{\theta_k-\tilde{\theta}_k}^2| \F_k ] &= \langle \nabla f (\theta_k) - \nabla f (\tilde{\theta}_k), \theta_k - \tilde{\theta}_k \rangle \norm{\theta_k - \tilde{\theta}_k}^2\eqsp.
\end{align*}
Similarly, 
\begin{align*}
&\PE[\langle \nabla F(\theta_k, \xi_{k+1}) - \nabla F(\tilde{\theta}_k, \xi_{k+1}), \theta_k - \tilde{\theta}_k \rangle \norm{\nabla F(\theta_k, \xi_{k+1}) - \nabla F(\tilde{\theta}_k, \xi_{k+1})}^2|\F_k] \\
&\qquad \qquad \qquad \leq \L^2 \langle \nabla f (\theta_k) - \nabla f (\tilde{\theta}_k), \theta_k - \tilde{\theta}_k \rangle \norm{\theta_k - \tilde{\theta}_k}^2
\end{align*}
Combining all inequalities above, we obtain 
\begin{multline*}
\PE[\norm{\theta_{k+1} - \theta'_{k+1}}^4|\F_k] \leq \norm{\theta_k-\tilde{\theta}_k}^4 \\
- (4\gamma - \gamma^4 \L^3 -4\gamma^2 \L  - 2\gamma^2 \L -4\gamma^3 \L^2) \langle \nabla f (\theta_k) - \nabla f (\tilde{\theta}_k), \theta_k - \tilde{\theta}_k \rangle \norm{\theta_k - \tilde{\theta}_k}^2
\end{multline*}
Using \Cref{ass:mu-convex} and since $1-\gamma^3 \L^3/4 - 3\gamma \L/2-\gamma^2 \L^2\geq 1- 11\gamma \L/4$, we get 
\begin{align*}
\PE[\norm{\theta_{k+1} - \tilde{\theta}_{k+1}}^4|\F_k] 
&\leq (1-4\gamma\mu(1 - 11\gamma \L/4))\norm{\theta_k-\tilde{\theta}_k}^4 \\ 
& \leq (1-2\gamma\mu(1 - 11\gamma \L/4))^2 \norm{\theta_k-\tilde{\theta}_k}^4\eqsp.
\end{align*}
Since $1 - 11\gamma \L/4 \geq 1/2$ for $\gamma \leq 2/(11\L)$, we complete the proof.

For simplicity of proof for second part of lemma we define $\delta_{k+1} = \norm{\theta_{k+1} - \theta'_{k+1}}$. Then we have 
\begin{align*}
    &\PE[\delta_{k+1}^{2p}|\F_k] \\&= \PE[(\delta_k^2 -2\gamma\langle \nabla F(\theta_k, \xi_{k+1}) - \nabla F(\tilde{\theta}_k, \xi_{k+1}), \theta_k - \tilde{\theta}_k \rangle + \gamma^2 \norm{\nabla F(\theta_k, \xi_{k+1}) - \nabla F(\tilde{\theta}_k, \xi_{k+1})}^2)^{p}|\F_k] \\&= 
\PE[\underset{\mathclap{\substack{i+j+l=p;\\ i,j,l \in \{0,\ldots p\}}}}{\sum}\frac{p!}{i!j!l!}\delta_k^{2i}(-2\gamma\langle \nabla F(\theta_k, \xi_{k+1}) - \nabla F(\tilde{\theta}_k, \xi_{k+1}), \theta_k - \tilde{\theta}_k \rangle)^j\gamma^{2l}\norm{\nabla F(\theta_k, \xi_{k+1}) - \nabla F(\tilde{\theta}_k, \xi_{k+1})}^{2l}|\F_k]\eqsp.
\end{align*}
Now we bound each term in the sum above.
\begin{enumerate}
    \item First, for $i = p, j = 0, l=0$ the corresponding term in the sum is equal to $\delta_k^{2p}$.
    \item Second, for $i = p-1, j = 1, l = 0$, we have
    \begin{equation*}
        \PE[(-2\gamma\langle \nabla F(\theta_k, \xi_{k+1}) - \nabla F(\tilde{\theta}_k, \xi_{k+1}), \theta_k - \tilde{\theta}_k \rangle)|\F_k] = -2\gamma\langle\nabla f(\theta_k) - \nabla f(\tilde{\theta}_k),  \theta_k - \tilde{\theta}_k \rangle\eqsp.
    \end{equation*}
    \item Third, for $l\geq 1$ or $j\geq 2$ we use Cauchy-Schwartz inequality and get 
    \begin{align*}
        &(2\gamma\langle \nabla F(\theta_k, \xi_{k+1}) - \nabla F(\tilde{\theta}_k, \xi_{k+1}), \theta_k - \tilde{\theta}_k \rangle)^j\gamma^{2l}\norm{\nabla F(\theta_k, \xi_{k+1}) - \nabla F(\tilde{\theta}_k, \xi_{k+1})}^{2l}\\&\qquad \qquad \leq 2^j\gamma^{j+2l}\delta_k^j\norm{\nabla F(\theta_k, \xi_{k+1}) - \nabla F(\tilde{\theta}_k, \xi_{k+1})}^{2l+j}\eqsp.
    \end{align*}
    Moreover using \Cref{ass:rand_noise}($2p$), we get 
    \begin{align*}
&\PE[2^j\gamma^{j+2l}\delta_k^{2i+j}\norm{\nabla F(\theta_k, \xi_{k+1}) - \nabla F(\tilde{\theta}_k, \xi_{k+1})}^{2l+j}|\F_k] \\& \leq 2^j\gamma^{j+2l}\delta_k^{2p-2}L^{2l+j-1}\langle \nabla f(\theta_k) - \nabla f(\tilde{\theta}_k),  \theta_k - \tilde{\theta}_k \rangle\eqsp.
    \end{align*}
\end{enumerate}
Combining all inequalities above, we obtain 
\begin{multline*}
    \PE[\delta_{k+1}^{2p}|\F_k] \leq \delta_k^{2p}-2p\gamma \langle\nabla f(\theta_k) - \nabla f(\tilde{\theta}_k),  \theta_k - \tilde{\theta}_k \rangle\delta_k^{2p-2} \\+ \bigl(\underset{\mathclap{\substack{i+j+l=p;\\ i,j,l \in \{0,\ldots p\}\\ j+2l \geq 2}}}{\sum}\frac{p!}{i!j!l!} 2^j\gamma^{j+2l}L^{2l+j-1}\bigr)\langle \nabla f(\theta_k) - \nabla f(\tilde{\theta}_k),  \theta_k - \tilde{\theta}_k \rangle\delta_k^{2p-2}\eqsp.
\end{multline*}
 Since $\gamma \leq \frac{p}{3^p4L}$, we have 
 \begin{equation*}
    \PE[\delta_{k+1}^{2p}|\F_k] \leq \delta_k^{2p}-\gamma p\langle \nabla f(\theta_k) - \nabla f(\tilde{\theta}_k),  \theta_k - \tilde{\theta}_k \rangle\delta_k^{2p-2}\eqsp.
\end{equation*}
It remains to apply \Cref{ass:mu-convex} together with an elementary bound $(1-p\mu\gamma) \leq (1-\gamma\mu)^p$. 
\end{proof}
\begin{lemma}
\label{lem:arbitrary_init_second_moment_bound_residial_term}
Assume \Cref{ass:mu-convex}, \Cref{ass:L-smooth}, \Cref{ass:rand_noise}($4$), and \Cref{ass:stationary_moments_bounds}($4$). Then for any $\gamma \in (0;1/(\L \Conststep{4})]$, any $n \in \nset$ and initial distribution $\nu$ it holds 
\begin{align*}
n^{-1}\PE^{1/2}_{\nu}[\norm{\sum_{k=n+1}^{2n}\{\psi(\theta_k) - \pi_{\gamma}(\psi)\}}^2]  
&\lesssim  n^{-1} \PE^{1/2}_{\pi_{\gamma}}[\norm{\sum_{k=n+1}^{2n}\{\psi(\theta_k) - \pi_{\gamma}(\psi)\}}^2] \\
& \qquad + \frac{\L_3(1-\gamma\mu)^{(n+1)/2}}{n\gamma\mu}\left(\PE^{1/2}_{\nu}[\norm{\theta_0-\thetas}^4] + \frac{\Constlast{4} \gamma \tau_4^2}{\mu}\right)\eqsp.
\end{align*}
\end{lemma}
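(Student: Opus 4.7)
The plan is to use a synchronous coupling argument to transfer the bound from an arbitrary initial distribution $\nu$ to the stationary distribution $\pi_{\gamma}$. First, I would introduce a parallel chain $\{\tilde{\theta}_k\}_{k \in \nset}$ defined by the same recurrence as $\{\theta_k\}$ but driven by the same noise sequence $\{\xi_k\}_{k \in \nset}$ and started from $\tilde{\theta}_0 \sim \pi_{\gamma}$, independently of $\theta_0 \sim \nu$. Then I would decompose
\begin{equation*}
\sum_{k=n+1}^{2n}\{\psi(\theta_k)-\pi_{\gamma}(\psi)\} = \sum_{k=n+1}^{2n}\{\psi(\theta_k)-\psi(\tilde{\theta}_k)\} + \sum_{k=n+1}^{2n}\{\psi(\tilde{\theta}_k)-\pi_{\gamma}(\psi)\},
\end{equation*}
and apply Minkowski's inequality. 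The second sum, being computed under the stationary initialization of $\tilde{\theta}_0$, directly contributes the first term on the right-hand side of the target bound.

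For the coupling sum, I would exploit that $\psi(\theta)=(1/2)\nabla^{3}f(\thetas)(\theta-\thetas)^{\otimes 2}$, so that by the identity $a^{\otimes 2}-b^{\otimes 2} = (a-b)\otimes a + b \otimes (a-b)$ together with \Cref{ass:L-smooth},
\begin{equation*}
\norm{\psi(\theta)-\psi(\tilde{\theta})} \lesssim \L_3 \norm{\theta-\tilde{\theta}}\bigl(\norm{\theta-\thetas}+\norm{\tilde{\theta}-\thetas}\bigr).
\end{equation*}
Applying Minkowski term-by-term and then the Cauchy--Schwarz inequality inside each expectation, I get
\begin{equation*}
\PE^{1/2}\bigl[\norm{\psi(\theta_k)-\psi(\tilde{\theta}_k)}^2\bigr] \lesssim \L_3 \,\PE^{1/4}\bigl[\norm{\theta_k-\tilde{\theta}_k}^4\bigr]\,\PE^{1/4}\bigl[(\norm{\theta_k-\thetas}+\norm{\tilde{\theta}_k-\thetas})^4\bigr].
\end{equation*}

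The next step is to control each factor. For the coupling factor, \Cref{lem: coupling_4_moment} delivers the contractive bound $\PE^{1/4}[\norm{\theta_k-\tilde{\theta}_k}^4] \leq (1-\gamma\mu)^{k/2}\PE^{1/4}[\norm{\theta_0-\tilde{\theta}_0}^4]$, which by Minkowski and \Cref{ass:stationary_moments_bounds}($4$) applied to $\tilde{\theta}_0 \sim \pi_{\gamma}$ is at most $(1-\gamma\mu)^{k/2}\bigl(\PE^{1/4}_{\nu}[\norm{\theta_0-\thetas}^4]+C(\Constlast{4}\gamma\tau_4^2/\mu)^{1/2}\bigr)$. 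For the second factor, \Cref{ass:stationary_moments_bounds}($4$) applied to both chains gives a uniform bound of order $(\Constlast{4}\gamma\tau_4^2/\mu)^{1/2}$ up to an exponentially decaying contribution from the initial condition. Summing the resulting geometric series
\begin{equation*}
\sum_{k=n+1}^{2n}(1-\gamma\mu)^{k/2} \lesssim \frac{(1-\gamma\mu)^{(n+1)/2}}{\gamma\mu},
\end{equation*}
and dividing by $n$ produces the claimed error term. The main obstacle is purely bookkeeping: carefully combining the initial-condition remainders from the coupling lemma with the stationary moment bounds so that only $\PE^{1/2}_{\nu}[\norm{\theta_0-\thetas}^4]+\Constlast{4}\gamma\tau_4^2/\mu$ survives as the multiplicative prefactor, rather than products that would produce spurious cross terms; absorbing the $\PE^{1/4}$ factors into $\PE^{1/2}$ via $(a+b)^{1/2}\lesssim a^{1/2}+b^{1/2}$ and uniform bounds on the second factor is the cleanest way to do this.
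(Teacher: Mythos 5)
Your proposal is correct and follows essentially the same route as the paper's proof: synchronous coupling of a $\nu$-initialized chain with a stationary one, Minkowski to split off the stationary term, a Lipschitz-type bound on $\psi$ (your tensor identity is exactly the content of \Cref{lem: Lipschitzness_remainder}), Cauchy--Schwarz to separate the coupling factor from the moment factor, the fourth-moment contraction of \Cref{lem: coupling_4_moment} together with \Cref{ass:stationary_moments_bounds}($4$), and a geometric series summation. The bookkeeping concern you flag is handled the same way in the paper, by absorbing cross terms via $ab \lesssim a^2 + b^2$ into the sum $\PE^{1/2}_{\nu}[\norm{\theta_0-\thetas}^4] + \Constlast{4}\gamma\tau_4^2/\mu$.
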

\begin{proof}
Using the synchronous coupling construction defined in \eqref{eq:kernel_coupling_construction} and the corresponding coupling kernel $\MKK_{\gamma}$, we obtain that
\begin{equation}
\begin{aligned}
\label{eq: coupling_arbitrary_eta}
&\PE_{\nu}^{1/2}[\norm{\sum_{k=n+1}^{2n}\{\psi(\theta_k) - \pi_{\gamma}(\psi)\}}^2] =  (\PE^{\MKK_{\gamma}}_{\nu, \pi_{\gamma}}[\norm{\sum_{k=n+1}^{2n}\{\psi(\theta_k) - \pi_{\gamma}(\psi)\}}^2])^{1/2} \\ &\quad \quad \leq \PE^{1/2}_{\pi_{\gamma}}[\norm{\sum_{k=n+1}^{2n}\{\psi(\tilde{\theta}_k) - \pi_{\gamma}(\psi)\}}^2] + (\PE^{\MKK_{\gamma}}_{\nu, \pi_{\gamma}}[\norm{\sum_{k=n+1}^{2n}\{\psi(\theta_k) - \psi(\tilde{\theta}_k)\}}^2])^{1/2}
\end{aligned}
\end{equation}
Applying Minkowski's inequality to the last term and using \Cref{lem: Lipschitzness_remainder}, we get 
\begin{align*}
(\PE^{\MKK_{\gamma}}_{\nu, \pi_{\gamma}}[\norm{\sum_{k=n+1}^{2n}\{\psi(\theta_k) - \psi(\tilde{\theta}_k)\}}^2])^{1/2} &\leq \sum_{k=n+1}^{2n}(\PE^{K}_{\nu, \pi_{\gamma}}[\norm{\{\psi(\theta_k) - \psi(\tilde{\theta}_k)\}}^2])^{1/2} \\ 
& \leq \frac{\L_3}{2}\sum_{k=n+1}^{2n}(\PE^{\MKK_{\gamma}}_{\nu, \pi_{\gamma}}[c^2(\theta_k, \tilde{\theta}_k)])^{1/2}\eqsp.
\end{align*}
Using H\"older's and Minkowski’s inequality and applying \Cref{lem: coupling_4_moment} , \eqref{eq:last_iterate_2_moment} and \eqref{eq:last_iterate_4_moment}, we obtain 
\begin{align*}
&(\PE^{\MKK_{\gamma}}_{\nu, \pi_{\gamma}}[c^2(\theta_k, \tilde{\theta}_k)])^{1/2} \\
& \qquad \leq (\PE^{\MKK_{\gamma}}_{\nu, \pi_{\gamma}}[\norm{\theta_k- \tilde{\theta}_k}^4])^{1/4}\bigl(\PE_{\pi_{\gamma}}^{1/4}[\norm{\tilde{\theta}_k -\thetas}^4] + \PE_{\nu}^{1/4}[\norm{\theta_k -\thetas}^4 + \frac{\gamma^{1/2}\tau_2}{\mu^{1/2}}]\bigr) \\ 
& \qquad \leq (1-\gamma\mu)^{k/2}(\PE^{\MKK_{\gamma}}_{\nu, \pi_{\gamma}}[\norm{\theta_0- \tilde{\theta}_0}^4])^{1/4}(\PE_{\nu}^{1/4}[\norm{\theta_0-\thetas}^4] + \frac{\Constlast{4}^{1/2}\gamma^{1/2}\tau_4}{\mu^{1/2}}+ \frac{\gamma^{1/2}\tau_2}{\mu^{1/2}}) \\
&\qquad \lesssim (1-\gamma\mu)^{k/2}\left(\frac{\Constlast{4} \gamma\tau_4^2}{\mu} + \PE_{\nu}^{1/2}\norm{\theta_0-\thetas}^4\right)
\end{align*}
Combining all inequalities above, we get
\begin{equation*}
(\PE^{\MKK_{\gamma}}_{\nu, \pi_{\gamma}}[\norm{\sum_{k=n+1}^{2n}\{\psi(\theta_k) - \psi(\theta_k')\}}^2])^{1/2} \lesssim \frac{\L_3(1-\gamma\mu)^{(n+1)/2}}{\gamma\mu}\left(\PE^{1/2}_{\nu}[\norm{\theta_0-\thetas}^4] + \frac{\Constlast{4} \gamma\tau_4^2}{\mu}\right)\eqsp.
\end{equation*}
Substituting the last inequality into \eqref{eq: coupling_arbitrary_eta} we complete the proof.
\end{proof}

\begin{lemma}
\label{lem:eta_second_moment_bound}
Assume \Cref{ass:mu-convex}, \Cref{ass:L-smooth}, \Cref{ass:rand_noise}($6$), and \Cref{ass:stationary_moments_bounds}($6$). Then for any $\gamma \in (0;1/(\L \Conststep{6})]$, $n \in \nset$, and initial distribution $\nu$, it holds that 
\begin{equation}
\begin{split}
n^{-1}\PE_{\nu}^{1/2}[\sum_{k=n+1}^{2n}\norm{\eta(\theta_k) - \pi_{\gamma}(\psi)}^2] 
&\leq n^{-1} \PE_{\nu}^{1/2}\bigl[\sum_{k=n+1}^{2n}\norm{\psi(\theta_k) - \pi_{\gamma}(\psi)}^2\bigr] \\
& + \frac{\L_4 (1-\gamma\mu)^{(n+1)/2}}{n \gamma\mu}\PE_{\nu}^{1/2}[\norm{\theta_0-\thetas}^6] + \frac{\L_4 \Constlast{6}^{3/2} \gamma^{3/2}\tau_6^3}{3\mu^{3/2}}\eqsp.
\end{split}
\end{equation}
\end{lemma}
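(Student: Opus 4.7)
The plan is to use the decomposition $\eta(\theta) = \psi(\theta) + G(\theta)$ coming from the fourth-order Taylor expansion of $\nabla f$ around $\thetas$, where $\psi$ and $G$ are given in \eqref{eq:psi_theta_definition}. After writing
\begin{equation*}
\eta(\theta_k) - \pi_{\gamma}(\psi) = \bigl(\psi(\theta_k) - \pi_{\gamma}(\psi)\bigr) + G(\theta_k)\eqsp,
\end{equation*}
an application of Minkowski's inequality under $\PE_{\nu}^{1/2}[\|\sum_{k=n+1}^{2n}\cdot\|^2]$ splits the quantity of interest into the $\psi$-term, which already matches the first summand on the right-hand side of the claim, and the remainder $n^{-1}\PE_{\nu}^{1/2}[\|\sum_{k=n+1}^{2n} G(\theta_k)\|^2]$ that I need to control by the two exponential and $\gamma^{3/2}$ terms.

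For the remainder I would again apply Minkowski to bring the sum outside the norm, reducing the task to bounding $\PE_{\nu}^{1/2}[\norm{G(\theta_k)}^2]$. Using \Cref{ass:L-smooth}, namely $\|\nabla^{4}f\| \leq \L_4$, the integral representation of $G$ gives the pointwise bound $\norm{G(\theta)} \leq (\L_4/6)\norm{\theta-\thetas}^3$, hence
\begin{equation*}
\PE_{\nu}^{1/2}[\norm{G(\theta_k)}^2] \leq \tfrac{\L_4}{6}\PE_{\nu}^{1/2}[\norm{\theta_k-\thetas}^6]\eqsp.
\end{equation*}
Next I would invoke \Cref{ass:stationary_moments_bounds}($6$), which controls $\PE_{\nu}^{1/3}[\norm{\theta_k-\thetas}^6]$ by $(1-\gamma\mu)^{k}\PE_{\nu}^{1/3}[\norm{\theta_0-\thetas}^6] + \Constlast{6}\gamma \tau_6^2/\mu$, and raise this to the $3/2$-th power using $(a+b)^{3/2} \lesssim a^{3/2}+b^{3/2}$. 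This yields
\begin{equation*}
\PE_{\nu}^{1/2}[\norm{\theta_k-\thetas}^6] \lesssim (1-\gamma\mu)^{3k/2}\PE_{\nu}^{1/2}[\norm{\theta_0-\thetas}^6] + \Constlast{6}^{3/2}\gamma^{3/2}\tau_6^3/\mu^{3/2}\eqsp.
\end{equation*}

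Finally I would sum over $k=n+1,\ldots,2n$ and divide by $n$. The geometric tail contributes $\sum_{k=n+1}^{2n}(1-\gamma\mu)^{3k/2} \lesssim (1-\gamma\mu)^{3(n+1)/2}/(\gamma\mu) \leq (1-\gamma\mu)^{(n+1)/2}/(\gamma\mu)$, which after multiplication by $\L_4/n$ produces the exponential remainder term stated in the lemma. The constant component yields $n\cdot\Constlast{6}^{3/2}\gamma^{3/2}\tau_6^3/\mu^{3/2}$, and dividing by $n$ and multiplying by $\L_4/6$ delivers the $\L_4\Constlast{6}^{3/2}\gamma^{3/2}\tau_6^3/(3\mu^{3/2})$ term. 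No real obstacle arises here; the only subtle point is the exponent-rescaling step $(a+b)^{3/2}\lesssim a^{3/2}+b^{3/2}$ used to pass from the $p/2=3$-moment bound supplied by \Cref{ass:stationary_moments_bounds}($6$) to the $1/2$-norm I actually need, and the observation that bounding $(1-\gamma\mu)^{3(n+1)/2}$ by $(1-\gamma\mu)^{(n+1)/2}$ is harmless for the intended use (it is already exponentially small).
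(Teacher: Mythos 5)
Your proposal is correct and follows essentially the same route as the paper: the fourth-order Taylor decomposition $\eta = \psi + G$, Minkowski's inequality, the pointwise bound $\norm{G(\theta)} \leq (\L_4/6)\norm{\theta-\thetas}^3$, and \Cref{ass:stationary_moments_bounds}($6$) with the elementary inequality $(a+b)^{3/2}\lesssim a^{3/2}+b^{3/2}$ to pass from the $\PE^{1/3}$-bound to the needed $\PE^{1/2}$-moment; your geometric-tail summation using $1-(1-\gamma\mu)^{3/2}\geq \gamma\mu$ matches the paper's closing observation. No gaps.
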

\begin{proof}
Applying the 4-rd order Taylor expansion with integral remainder, we get that
\begin{equation}
\label{eq: eta_taylor}
\eta(\theta) = \psi(\theta) + \frac{1}{2}\left(\int_{0}^{1}t^2 \nabla^{4}f(t\thetas + (1-t)\theta)\,dt\right) (\theta - \thetas)^{\otimes 3}\eqsp,
\end{equation} 
and using \Cref{ass:L-smooth}, we obtain
\begin{equation}
\label{eq: bound_taylor_remainder}
(1/2) \norm{\left(\int_{0}^{1}t^2 \nabla^{4}f(t\thetas + (1-t)\theta)\,dt\right) (\theta - \thetas)^{\otimes 3}}\leq \L_4\norm{\theta-\thetas}^3\eqsp.
\end{equation}
Therefore, combining \eqref{eq: eta_taylor}, \Cref{ass:L-smooth}, and applying Minkowski's inequality, we get 
\begin{multline}
\label{eq: eta_decomposotion} 
\PE_{\nu}^{1/2}\bigl[\sum_{k=n+1}^{2n}\norm{\eta(\theta_k)-\pi_{\gamma}(\psi)}^2\bigr] \leq  \PE_{\nu}^{1/2}\bigl[\sum_{k=n+1}^{2n}\norm{\psi(\theta_k) - \pi_{\gamma}(\psi)}^2\bigr] \\ 
+ \frac{\L_4}{6}\sum_{k=n+1}^{2n}\PE_{\nu}^{1/2}[\norm{\theta_k -\thetas}^6]
\end{multline}
Applying \Cref{ass:stationary_moments_bounds}($6$) for the last term of \eqref{eq: eta_decomposotion}, we get 
\begin{equation}
\begin{aligned}
\PE_{\nu}^{1/2}\bigl[\sum_{k=n+1}^{2n}\norm{\eta(\theta_k) - \pi_{\gamma}(\psi)}^2\bigr] &\lesssim \PE_{\nu}^{1/2}\bigl[\sum_{k=n+1}^{2n}\norm{\psi(\theta_k) - \pi_{\gamma}(\psi)}^2\bigr] + \frac{\L_4 n \Constlast{6}^{3/2}\gamma^{3/2}\tau_6^3}{\mu^{3/2}} \\ & \qquad + \frac{\L_4 (1-\gamma\mu)^{3(n+1)/2}}{1-(1-\gamma\mu)^{3/2}}\PE_{\nu}^{1/2}[\norm{\theta_0-\thetas}^6]\eqsp.
\end{aligned}
\end{equation}
In remains to notice that $(1-\gamma\mu)^{3/2} \leq (1-\gamma\mu)$, and the statement follows.
\end{proof}

We conclude this section with a technical statement on the properties of the function $\psi$ from \eqref{eq:psi_function_appendix}. 
\begin{lemma}
\label{lem: Lipschitzness_remainder}
Let $\psi(\cdot)$ be a function defined in \eqref{eq:psi_function_appendix}. Then for any $\theta, \theta' \in \rset^d$, it holds that
\begin{equation*}
\norm{\psi(\theta) - \psi(\theta')} \leq \frac{1}{2} \L_3 c(\theta,\theta').
\end{equation*}
\end{lemma}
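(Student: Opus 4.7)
The plan is to reduce the claim to a short algebraic identity for the difference of two outer products, and then compare the resulting bound with the explicit form of the cost function $c$.

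First, I would write
\[
\psi(\theta) - \psi(\theta') = \tfrac{1}{2} \nabla^{3} f(\thetas)\bigl[(\theta-\thetas)^{\otimes 2} - (\theta'-\thetas)^{\otimes 2}\bigr],
\]
and apply the standard identity $a^{\otimes 2} - b^{\otimes 2} = (a-b)\otimes a + b\otimes (a-b)$ with $a = \theta-\thetas$ and $b = \theta'-\thetas$. This yields
\[
(\theta-\thetas)^{\otimes 2} - (\theta'-\thetas)^{\otimes 2} = (\theta-\theta')\otimes(\theta-\thetas) + (\theta'-\thetas)\otimes(\theta-\theta').
\]

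Next, I would invoke the uniform bound $\norm{\nabla^{3}f(\thetas)} \leq \L_3$ from \Cref{ass:L-smooth} (this is the operator norm used in the paper's notation for tensor contractions $\nabla^k f(\theta)M$), apply the triangle inequality and the multiplicativity of the tensor norm to obtain
\[
\norm{\psi(\theta)-\psi(\theta')} \leq \tfrac{1}{2}\L_3 \norm{\theta-\theta'}\bigl(\norm{\theta-\thetas} + \norm{\theta'-\thetas}\bigr).
\]

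Finally, I would compare this with the definition
\[
c(\theta,\theta') = \norm{\theta-\theta'}\Bigl(\norm{\theta-\thetas} + \norm{\theta'-\thetas} + \tfrac{2\sqrt{2}\tau_2\sqrt{\gamma}}{\sqrt{\mu}}\Bigr),
\]
from \eqref{eq:function_c_def}. Since the extra summand $2\sqrt{2}\tau_2\sqrt{\gamma}/\sqrt{\mu}$ is non-negative, the right-hand side above is bounded by $(\L_3/2)\, c(\theta,\theta')$, which is the desired inequality.

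There is no serious obstacle here: the only non-routine point is recognising that $c$ was precisely engineered to dominate $\norm{\theta-\theta'}(\norm{\theta-\thetas}+\norm{\theta'-\thetas})$, with the extra additive $\sqrt{\gamma}$ term playing no role in this particular Lipschitz-type estimate (it is instead needed for the coupling arguments in \Cref{lem:Wasserstein_ergodicity} and for the moment bound \eqref{eq:moment_scaling_cost_function}). Thus the lemma follows from a one-line tensor identity together with the hypothesis on $\norm{\nabla^{3}f}$.
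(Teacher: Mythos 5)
Your proof is correct and follows the same route as the paper: reduce to the difference of tensor squares, split it into two bilinear terms, bound each by the tensor operator norm of $\nabla^3 f(\thetas)$ and the factor norms, then absorb the resulting $\norm{\theta-\theta'}(\norm{\theta-\thetas}+\norm{\theta'-\thetas})$ into $c(\theta,\theta')$ by dropping the non-negative $2\sqrt{2}\tau_2\sqrt{\gamma}/\sqrt{\mu}$ term. The only (cosmetic) difference is that your identity $a^{\otimes 2}-b^{\otimes 2}=(a-b)\otimes a+b\otimes(a-b)$ bypasses the symmetry $T_{ijk}=T_{jik}$ that the paper explicitly invokes when rewriting the coordinates of $Tx^{\otimes 2}-Ty^{\otimes 2}$; both decompositions yield the same bound after the triangle inequality.
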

\begin{proof}
     For simplicity, let us denote $T = \nabla^{3}f(\theta^*)$. 
Hence, 
\begin{equation}
     \norm{\psi(\theta) - \psi(\theta')} \leq \frac{1}{2}\norm{T(\theta - \thetas)^{\otimes 2} - T(\theta' - \thetas)^{\otimes 2}}.
\end{equation}
Note that 
\begin{equation}
\norm{T} = \underset{x \neq 0, y \neq 0, z \neq 0}{\sup} \frac{\underset{i, j, k}{\sum}{T_{ijk}x_iy_jz_k}}{\norm{x}\norm{y}\norm{z}} \geq \underset{x\neq 0, y \neq 0}{\sup}\underset{z\neq 0}{\sup}\frac{\underset{k}{\sum} z_k \underset{i,j}{\sum}T_{ijk}x_iy_j}{\norm{z}\norm{y}\norm{x}} = \underset{x\neq 0, y \neq 0}{\sup}\frac{\norm{t(x,y)}}{\norm{y}\norm{x}},
\end{equation}
where $t(x,y)_k = \underset{i, j}{\sum}T_{ijk}x_iy_j$.
Therefore, for any $x,y \in \rset^d$, it holds that 
\begin{equation}
    \label{eq: tensor_norm_inequality}
    \norm{t(x,y)} \leq \norm{x}\norm{y}\norm{T}
\end{equation}
We denote $v = Tx^{\otimes 2} - Ty^{\otimes 2}$. Then 
\begin{gather}
    \label{eq: decomposition_lipschts_part}
   v_k = \underset{i,j}{\sum}T_{ijk}(x_ix_j - y_iy_j) = \underset{i,j}{\sum}T_{ijk}((x_i-y_i)x_j + (x_i-y_i)y_j) = \notag\\ \underset{i,j}{\sum}T_{ijk}(x_i-y_i)x_j + \underset{i,j}{\sum}T_{ijk}(x_i-y_i)y_j,
\end{gather}
where the first inequality is true since $T_{ijk}=T_{jik}$ by definition of $T$.
Combining  \eqref{eq: tensor_norm_inequality} and \eqref{eq: decomposition_lipschts_part} and using triangle inequality, we obtain 
\begin{equation*}
    \norm{v} \leq \norm{T}\norm{x-y}(\norm{x}+\norm{y}) \leq \norm{T}\norm{x-y}(\norm{x}+\norm{y}+\frac{2\sqrt{2}
    \tau_2\sqrt{\gamma}}{\sqrt{\mu}}).
\end{equation*}
We complete the proof setting $x = \theta - \thetas, y = \theta'-\thetas$
\end{proof}

\section{Proof of \Cref{th:p_moment_RR}}
\label{th:RR_pth_moment_proof}
\begin{theorem}[Version of \Cref{th:p_moment_RR} with explicit constants]
\label{th:RR_pth_moment_proof_explicit}
Let $p \geq 2$ and assume \Cref{ass:mu-convex}, \Cref{ass:L-smooth}, \Cref{ass:rand_noise}($3p$), and \Cref{ass:stationary_moments_bounds}($3p$). Then for any $\gamma \in (0, 1/(\L \Conststep{3p}) \wedge p/(4 \cdot 3^p \L)]$, initial distribution $\nu$, and $n \in \nset$, the estimator $\prtheta_{n}^{(RR)}$ defined in \eqref{eq:theta_RR_estimator} satisfies
\begin{align*}
\PE_{\nu}^{1/p}[\norm{\H(\prtheta_{n}^{(RR)} -\thetas)}^p] 
&\leq \frac{c_1 \sqrt{\trace{\noisecov}}p^{1/2}}{n^{1/2}} + \frac{c_2 p \tau_p}{n^{1-1/p}} + \frac{\ConstRR{5}}{n \gamma^{1/2}} + \frac{\ConstRR{6} \gamma^{1/2}}{n^{1/2}} + \ConstRR{7} \gamma^{3/2} \\ 
&\qquad \qquad + \frac{\ConstRR{8}}{n} + \Rem_5(n, \gamma, \norm{\theta_0-\thetas})\eqsp,
\end{align*}
where we have set 
\begin{equation}
\label{eq:const_rr_5_8_def}
\begin{split}
\ConstRR{5} &= \frac{c_0 \Constlast{p}^{1/2} \tau_{p}}{\mu^{1/2}} \eqsp, \quad \ConstRR{6} = \frac{c_0 \L \Constlast{p}^{1/2} p \tau_{p}}{\mu^{1/2}} + \frac{c_0 \L \Constlast{2p} p  \tau^2_{2p}}{\mu^{3/2}} \eqsp, \\
\ConstRR{7} &= c_0 \left(\ConstPR{1} + \frac{ \L \Constlast{3p}^{3/2}  \tau_{3p}^3}{\mu^{3/2}}\right) \eqsp, \quad \ConstRR{8} = \frac{c_0 \L \Constlast{2p} \tau_{2p}}{\mu^2}\eqsp,
\end{split}
\end{equation}
and the remainder term $\Rem_5(n, \gamma, \norm{\theta_0-\thetas})$ is given by 
\begin{multline}
\label{eq:exponential_small_term_RR_pth_moment}
\Rem_5(n, \gamma, \norm{\theta_0-\thetas}) = \frac{c_0 (1 - \gamma \mu)^{(n+1)/2}}{\gamma n} \PE_{\nu}^{1/p}\bigl[\norm{\theta_0 - \thetas}^{p}\bigr] + \frac{c_0 \L p (1-\gamma \mu)^{(n+1)/2}}{\mu^{1/2} \gamma^{1/2} n} \PE^{1/p}_{\nu}[\norm{\theta_0 - \thetas}^{p}] \\
+ \frac{c_0\L(1-\gamma\mu)^{(n+1)/2}}{\gamma\mu n}\left(\PE^{1/p}_{\nu}[\norm{\theta_0-\thetas}^{2p}] + \frac{\Constlast{2p} \gamma\tau_{2p}^2}{\mu}\right)+ \frac{c_0 \L (1-\gamma \mu)^{(3/2)n}}{\gamma \mu } \PE_{\nu}^{1/p}\bigl[\norm{\theta_0-\thetas}^{3p}\bigr]
\end{multline}
\end{theorem}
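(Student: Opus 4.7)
The plan is to start from the decomposition \eqref{eq: decomposition_RR} already exploited in the proof of \Cref{th:RR_second_moment_explicit_constants} and estimate the $L^p$-norm of each of its seven summands $T_1,\ldots,T_7$ via Minkowski's inequality, replacing every second-moment estimate from that proof by its $L^p$ counterpart. Three of the estimates carry over essentially verbatim: the boundary terms $T_2,T_3$ are bounded by \Cref{ass:stationary_moments_bounds}($p$) evaluated at $k=n+1$ and $k=2n$ and produce the $\ConstRR{5}/(n\gamma^{1/2})$ piece; the noise-difference terms $T_4,T_5$ are handled by the $L^p$-Burkholder bound \eqref{eq:p_th_moment_bound_eps} of \Cref{lem:error_bound_second_moment} and produce the first half of $\ConstRR{6}\gamma^{1/2}/n^{1/2}$; and the deterministic bias term $T_6=\norm{2\pi_\gamma(\psi)-\pi_{2\gamma}(\psi)}\lesssim \ConstPR{1}\gamma^{3/2}$ is inherited from \Cref{prop:bias_PR} and contributes to $\ConstRR{7}\gamma^{3/2}$.

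The first new ingredient is the leading martingale term $T_1=n^{-1}\sum_{k=n+1}^{2n}\noise{k+1}(\thetas)$, a sum of $n$ i.i.d.\ centered vectors in $\rset^d$ with covariance $\noisecov$ and $p$-th moment $\tau_p^p$. A scalar Rosenthal inequality applied to $\norm{T_1}$ gives
\[
\PE^{1/p}[\norm{nT_1}^p]\lesssim \sqrt{n\trace{\noisecov}}\,p^{1/2} + p\tau_p n^{1/p}\eqsp,
\]
which after dividing by $n$ accounts for the first two terms of \eqref{eq:rosenthal_type_p_moment}. The second new ingredient is the correlated sum $T_7=n^{-1}\sum_{k=n+1}^{2n}\{\eta(\theta_k^{(\gamma)})-\pi_\gamma(\psi)\}$ (plus its $2\gamma$ twin). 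Following the splitting $\eta=\psi+G$ of \Cref{lem:eta_second_moment_bound}, the remainder $G$ satisfies $\norm{G(\theta)}\lesssim \L_4\norm{\theta-\thetas}^3$; hence Minkowski's inequality combined with \Cref{ass:stationary_moments_bounds}($3p$) yields the $\ConstRR{7}\gamma^{3/2}$-contribution, and this is precisely why the hypotheses are required at the $3p$-th moment. For the quadratic part $\psi$ one invokes the new Rosenthal-type inequality \Cref{prop:psi_p_moment_bound}; dividing its right-hand side by $n$ produces both the second half of $\ConstRR{6}\gamma^{1/2}/n^{1/2}$ and the $\ConstRR{8}/n$ term of \eqref{eq:const_rr_5_8_def}.

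Because \Cref{prop:psi_p_moment_bound} is stated under the stationary law $\pi_\gamma$, its conclusion must be transferred to an arbitrary initial law $\nu$. I would do this by the synchronous-coupling argument of \Cref{lem:arbitrary_init_second_moment_bound_residial_term}: run $(\theta_k^{(\gamma)})$ alongside a stationary copy $(\tilde\theta_k^{(\gamma)})$ driven by the same noise, bound $\norm{\psi(\theta_k)-\psi(\tilde\theta_k)}$ using \Cref{lem: Lipschitzness_remainder}, and close the estimate via the $L^p$-contraction
\[
\PE^{1/p}[\norm{\theta_{k+1}-\tilde\theta_{k+1}}^p\mid\F_k]\leq (1-\gamma\mu)^{1/2}\norm{\theta_k-\tilde\theta_k}\eqsp,
\]
which generalises \Cref{lem: coupling_4_moment} from $p=4$ to arbitrary $p\geq 2$ by expanding $\norm{\theta_k-\tilde\theta_k-\gamma(\nabla F(\theta_k,\xi)-\nabla F(\tilde\theta_k,\xi))}^p$ and invoking \Cref{ass:rand_noise}($p$). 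The resulting initial-condition residual is exactly $\Rem_5$ from \eqref{eq:exponential_small_term_RR_pth_moment}.

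The main obstacle is \Cref{prop:psi_p_moment_bound} itself, whose proof is postponed to \Cref{sec:psi_p_moment_bound_proof}: since $\{\theta_k^{(\gamma)}\}$ is only geometrically ergodic in the weighted Wasserstein semi-metric $\Wass{c}$ associated with \eqref{eq:function_c_def} and does not satisfy standard strong mixing, off-the-shelf Rosenthal inequalities (e.g.\ \citep[Theorem~1]{merlevede2011bernstein}) cannot be applied to the quadratic observable $\psi$. Once that inequality is available, the proof of \Cref{th:RR_pth_moment_proof_explicit} reduces to careful bookkeeping, ensuring in particular that no power of $p$ larger than $p^{1/2}$ is attached to the leading minimax term $\sqrt{\trace{\noisecov}}/\sqrt{n}$.
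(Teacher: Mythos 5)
Your proposal is correct and follows essentially the same route as the paper: the same decomposition \eqref{eq: decomposition_RR}, the Pinelis--Rosenthal bound for the i.i.d.\ term $n^{-1}\sum_{k}\noise{k+1}(\thetas)$, \Cref{ass:stationary_moments_bounds}($p$) for the boundary terms, \eqref{eq:p_th_moment_bound_eps} for the noise differences, \Cref{prop:bias_PR} for the $\gamma^{3/2}$ bias, and the splitting $\eta=\psi+G$ with \Cref{prop:psi_p_moment_bound} handling the quadratic part and \Cref{ass:stationary_moments_bounds}($3p$) handling $G$ (the paper simply lists these as separate terms $T_7$ and $T_8$ rather than seven summands). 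The only divergence is the transfer of the $\psi$-sum bound from the stationary initialization $\pi_\gamma$ to an arbitrary $\nu$: the paper adopts \citep[Theorem~4]{durmus2023rosenthal}, whereas you propose an explicit synchronous-coupling argument extending \Cref{lem: coupling_4_moment} to $L^p$ — a viable alternative, with the caveat that the claimed one-step $L^p$ contraction for general $p\geq 2$ does not follow from a binomial expansion as in the $p=4$ case and requires the full strength of \eqref{eq:L-co-coercivity-assum} for all $q=2,\dots,p$.
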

\begin{proof}
Using the decomposition \eqref{eq: decomposition_RR}, we obtain that for any $p \geq 2$, it holds that 
\begin{align*}
\PE_{\nu}^{1/p}[\norm{ \H (\prtheta_{n}^{(RR)}-\thetas)}^p] 
&\lesssim \underbrace{\frac{1}{n}\PE_{\nu}^{1/p}[\norm{\sum_{k=n+1}^{2n}\noise{k+1}(\thetas)}^p]}_{T_1} + \underbrace{\frac{1}{\gamma n}\PE_{\nu}^{1/p}[\norm{\theta_{n+1}^{(\gamma)}-\thetas}^p] + \frac{1}{\gamma n} \PE_{\nu}^{1/p}[\norm{\theta_{2n+1}^{(\gamma)}-\thetas}^p]}_{T_2} \\ 
&+ \underbrace{\frac{1}{\gamma n}\PE_{\nu}^{1/p}[\norm{\theta_{n+1}^{(2\gamma)}-\thetas}^p] + \frac{1}{\gamma n}\PE_{\nu}^{1/p}[\norm{\theta_{2n+1}^{(2\gamma)}-\thetas}^p]}_{T_3} \\ 
&+ \underbrace{\frac{1}{n}\PE_{\nu}^{1/p}[\norm{\sum_{k=n+1}^{2n}\noise{k+1}(\theta_k^{(\gamma)}) - \noise{k+1}(\thetas)}^p]}_{T_4} \\
&+  \underbrace{\frac{1}{n}\PE_{\nu}^{1/p}[\norm{\sum_{k=n+1}^{2n}\noise{k+1}(\theta_k^{(2\gamma)}) - \noise{k+1}(\thetas)}^p]}_{T_5} + \underbrace{\norm{2\pi_{\gamma}(\psi) - \pi_{2\gamma}(\psi)}}_{T_6}\\
& + \underbrace{\frac{1}{n}\PE_{\nu}^{1/p}[\norm{\sum_{k=n+1}^{2n}\psi(\theta_k^{(\gamma)}) - \pi_{\gamma}(\psi)}^p] + \frac{1}{n}\PE_{\nu}^{1/p}[\norm{\sum_{k=n+1}^{2n}\psi(\theta_k^{(2\gamma)}) - \pi_{2\gamma}(\psi)}^p]}_{T_7} \\
& + \underbrace{\frac{1}{n}\sum_{k=n+1}^{2n}\PE^{1/p}_{\nu}\bigl[\norm{G(\theta_k^{(\gamma)})}^{p}\bigr] + \frac{1}{n}\sum_{k=n+1}^{2n}\PE^{1/p}_{\nu}\bigl[\norm{G(\theta_k^{(2\gamma)})}^{p}\bigr]}_{T_8}\eqsp.
\end{align*}
Now we upper bounds the terms above separately. Applying first the Pinelis version of Rosenthal inequality \citep[Theorem 4.1]{pinelis_1994} together with \Cref{ass:rand_noise}($p$), we obtain that 
\begin{align*}
T_1 \leq \frac{c_1 \sqrt{\trace{\noisecov}}p^{1/2}}{n^{1/2}} + \frac{c_2 p \tau_p}{n^{1-1/p}}\eqsp.
\end{align*}
Applying \Cref{ass:stationary_moments_bounds}($p$) (which is implied by \Cref{ass:stationary_moments_bounds}($3p$)), we obtain that 
\begin{align*}
T_2 + T_3 \lesssim \frac{\Constlast{p}^{1/2} \tau_{p}}{\mu^{1/2} n \gamma^{1/2}} + \frac{(1 - \gamma \mu)^{(n+1)/2}}{\gamma n} \PE_{\nu}^{1/p}\bigl[\norm{\theta_0 - \thetas}^{p}\bigr]\eqsp.
\end{align*}
Applying \Cref{lem:error_bound_second_moment} (see the bound \eqref{eq:p_th_moment_bound_eps}), we get that 
\begin{align*}
T_4 + T_5 \lesssim \frac{\L \Constlast{p}^{1/2} \gamma^{1/2} p \tau_{p}}{\mu^{1/2} n^{1/2}} + \frac{\L p (1-\gamma \mu)^{(n+1)/2}}{\mu^{1/2} \gamma^{1/2} n} \PE^{1/p}_{\nu}[\norm{\theta_0 - \thetas}^{p}] \eqsp.
\end{align*}
Using the bounds \eqref{eq:one-step-expanded-recur} and \eqref{eq:T_6_bound}, we obtain 
\begin{align*}
T_6 \lesssim \ConstPR{1} \gamma^{3/2}\eqsp.
\end{align*}
Applying \Cref{prop:psi_p_moment_bound}, we get 
\begin{align*}
\frac{1}{n}\PE_{\pi_\gamma}^{1/p}[\norm{\sum_{k=n+1}^{2n}\psi(\theta_k^{(\gamma)}) - \pi_{\gamma}(\psi)}^p] \lesssim \frac{\L \Constlast{2p} p  \tau^2_{2p} \gamma^{1/2}}{\mu^{3/2} n^{1/2}} +  \frac{\L \Constlast{2p} \tau_{2p}}{\mu^2 n}\eqsp.
\end{align*}
Using this bound and \Cref{lem:Generalization_Rosenthal}, we obtain that 
\begin{align*}
T_7 \lesssim \frac{\L \Constlast{2p} p  \tau^2_{2p} \gamma^{1/2}}{\mu^{3/2} n^{1/2}} +  \frac{\L \Constlast{2p} \tau_{2p}}{\mu^2 n} + \frac{\L(1-\gamma\mu)^{(n+1)/2}}{\gamma\mu n}\left(\PE^{1/p}_{\nu}[\norm{\theta_0-\thetas}^{2p}] + \frac{\Constlast{2p} \gamma\tau_{2p}^2}{\mu}\right)
\end{align*}
Finally, applying the definition of $G(\theta)$ in \eqref{eq:psi_theta_definition} together with \Cref{ass:stationary_moments_bounds}($3p$), we obtain that 
\begin{align*}
T_8 
&\lesssim \frac{\L \Constlast{3p}^{3/2} \gamma^{3/2} \tau_{3p}^3}{\mu^{3/2}} + \frac{\L}{n} \sum_{k=n+1}^{2n} (1-\gamma \mu)^{(3/2)k} \PE_{\nu}^{1/p}\bigl[\norm{\theta_0-\thetas}^{3p}\bigr] \\
&\lesssim \frac{\L \Constlast{3p}^{3/2} \gamma^{3/2} \tau_{3p}^3}{\mu^{3/2}} + \frac{\L (1-\gamma \mu)^{(3/2)n}}{\gamma \mu } \PE_{\nu}^{1/p}\bigl[\norm{\theta_0-\thetas}^{3p}\bigr]\eqsp.
\end{align*}
To complete the proof it remains to combine the bounds for $T_1$ to $T_8$.
\end{proof}

\subsection{Proof of \Cref{prop:psi_p_moment_bound}}
\label{sec:psi_p_moment_bound_proof}
In the proof below we use the notation 
\[
\barpsi(\theta) = \psi(\theta) - \pi_{\gamma}(\psi)\eqsp.
\]
We proceed with the blocking technique. Indeed, let us set the parameter 
\begin{equation}
\label{eq:m_gamma_def}
m = m(\gamma) = \biggl \lceil \frac{2 \log 4}{\gamma \mu} \biggr \rceil \eqsp.
\end{equation}
Our choice of parameter $m(\gamma)$ is due to \Cref{lem:Wasserstein_ergodicity}. For notation conciseness we write it simply as $m$, dropping its dependence upon $\gamma$. Using Minkowski's inequality, we obtain that
\begin{equation}
\label{eq:whole_part}
\PE_{\pi_{\gamma}}^{1/p}\bigl[\norm{\sum_{k=0}^{n-1}\barpsi(\theta_k)}^p\bigr] \leq  \PE_{\pi_{\gamma}}^{1/p}\bigl[\norm{\sum_{k=0}^{\lfloor n/m \rfloor m-1}\barpsi(\theta_k)}^p\bigr] + m \PE_{\pi_{\gamma}}^{1/p}\bigl[\norm{\barpsi(\theta_0)}^p\bigr] \eqsp.
\end{equation}
Now we consider the Poisson equation, associated with $\MKQ_{\gamma}^{m}$ and function $\barpsi$, that is, 
\begin{equation}
\label{eq:Poisson_eq_iterates}
g_m(\theta) - \MKQ_{\gamma}^{m} g_m(\theta) = \barpsi(\theta)\eqsp.
\end{equation}
The function
\begin{equation}
\label{eq:g_m_theta_def}
g_m(\theta) = \sum_{k = 0}^{\infty}\MKQ_{\gamma}^{km}\barpsi(\theta)
\end{equation}
is well-defined under the assumptions \Cref{ass:mu-convex}, \Cref{ass:L-smooth}, \Cref{ass:rand_noise}($2p$), and \Cref{ass:stationary_moments_bounds}($2p$). Moreover, $g_m$ is a solution of the Poisson equation \eqref{eq:Poisson_eq_iterates}. Define $q := \lfloor n/m \rfloor$, then we have
\begin{equation}
\label{eq:decomposition}
\sum_{k=0}^{q m - 1}\barpsi(\theta_k) = \sum_{r = 0}^{m-1} B_{m,r}\eqsp, \quad \text {with} \quad B_{m,r}= \sum_{k = 0}^{q-1} \bigl\{g_m(\theta_{km+r}) - \MKQ_{\gamma}^{m} g_m(\theta_{km+r})\bigr\}
\eqsp.
\end{equation}
Using Minkowski's inequality, we get from \eqref{eq:whole_part}, that 
\begin{equation}
\label{eq:block_bound}
\PE_{\pi_{\gamma}}^{1/p}\bigl[\norm{\sum_{k=0}^{n-1}\barpsi(\theta_k)}^p\bigr] \leq 
m \PE_{\pi_{\gamma}}^{1/p}\bigl[\norm{\sum_{k=1}^{q} \left\{ g_m(\theta_{km}) - \MKQ_{\gamma}^{m} g_m(\theta_{(k-1)m}) \right\}}^p \bigr] + 2m \PE_{\pi_{\gamma}}^{1/p}\bigl[\norm{\psi(\theta_0}^p\bigr] 
\end{equation}
Now we upper bound both terms of \eqref{eq:block_bound} separately. Under assumption \Cref{ass:L-smooth}, and applying \Cref{ass:stationary_moments_bounds}($2p$), we get
\begin{equation}
\PE_{\pi_{\gamma}}^{1/p}\bigl[\norm{\psi(\theta_0}^p\bigr] \leq \frac{\L}{2}\PE_{\pi_{\gamma}}^{1/p}\bigl[\norm{\theta_0 - \thetas}^{2p}\bigr] \leq \frac{\L \Constlast{2p} \gamma \tau_{2p}^2}{2 \mu} \eqsp.
\end{equation}
To proceed with the first term, we apply Burkholder's inequality  \citep[Theorem~8.6]{osekowski:2012}, and obtain that 
\begin{multline}
\label{eq:burkholder_applied}
\PE_{\pi_{\gamma}}^{1/p}\bigl[\norm{\sum_{k=1}^{q} \left\{ g_m(\theta_{km}) - \MKQ_{\gamma}^{m}g_m(\theta_{(k-1)m}) \right\}}^p \bigr] \\
\leq p \PE_{\pi_{\gamma}}^{1/p}\bigr[\bigl(\sum_{k=1}^{q}\norm{\left\{ g_m(\theta_{km}) - \MKQ_{\gamma}^{m} g_m(\theta_{(k-1)m}) \right\}}^{2}\bigr)^{p/2}\bigr]\eqsp.
\end{multline}
Applying now Minkowski's inequality again, we get 
\begin{align*}
&\PE_{\pi_{\gamma}}^{2/p}\bigl[\bigl(\sum_{k=1}^{q}\norm{\left\{ g_m(\theta_{km}) - \MKQ_{\gamma}^{m} g_m(\theta_{(k-1)m}) \right\}}^{2}\bigr)^{p/2}\bigr] \leq q \PE_{\pi_{\gamma}}^{2/p}\bigl[\norm{\left\{ g_m(\theta_{km}) - \MKQ_{\gamma}^{m} g_m(\theta_{(k-1)m}) \right\}}^{p}\bigr] \\
& \qquad\qquad\qquad\qquad \lesssim q \left( \PE^{2/p}_{\pi_{\gamma}}[\norm{g_m(\theta_{0})}^{p}] + \PE^{2/p}_{\pi_{\gamma}}[\norm{ \MKQ_{\gamma}^{m} g_m(\theta_{0})}^{p}] \right) \\
& \qquad\qquad\qquad\qquad \lesssim q \PE^{2/p}_{\pi_{\gamma}}[\norm{g_m(\theta_{0})}^{p}]\eqsp.
\end{align*}
It remains to upper bound the moment $\PE^{2/p}_{\pi_{\gamma}}[\norm{g_m(\theta_{0})}^{p}]$. In order to do this, we first note that due to the duality theorem \citep[Theorem~20.1.2.]{douc:moulines:priouret:soulier:2018}, we get that for any $k \in \nset$,  
\begin{align*}
\norm{\MKQ_{\gamma}^{mk}\psi(\theta) - \pi_{\gamma}(\psi)} 
&= \sup_{u \in \rset^{d}:\norm{u}=1} |\MKQ_{\gamma}^{mk}(u^{\top} \psi(\theta)) - \pi_{\gamma}(u^{\top}\psi)| \\
&\leq \frac{1}{2} \L_3 \Wass{c}(\delta_{\theta}\MK_{\gamma}^{km}, \pi_{\gamma}) \\
&\leq 2 \L_3 (1/2)^{k}\Wass{c}(\delta_{\theta}, \pi_{\gamma})\eqsp,   
\end{align*}
where the last inequality is due to \Cref{lem:Wasserstein_ergodicity}. Hence, applying the definition of $g_m(\theta)$ in \eqref{eq:g_m_theta_def}, we obtain that 
\begin{align*}
\PE^{1/p}_{\pi_{\gamma}}[\norm{g_m(\theta_{0})}^{p}] \leq  \sum_{k = 0}^{\infty} \PE^{1/p}_{\pi_{\gamma}}\bigl[\norm{\MK_{\gamma}^{k m} \barpsi(\theta)}^p\bigr] \leq 2 \L_{3} \sum_{k = 0}^{\infty} (1/2)^{k}\PE^{1/p}_{\pi_{\gamma}}\bigl[\{\Wass{c}(\delta_{\theta}, \pi_{\gamma})\}^p\bigr]\eqsp.
\end{align*}
To control the latter term, we simply apply the definition of $\Wass{c}(\delta_{\theta}, \pi_{\gamma})$ and a cost function $c(\theta,\theta^{\prime})$ together with \Cref{ass:stationary_moments_bounds}($2p$), we get 
\begin{align*}
&\PE^{1/p}_{\pi_{\gamma}}\bigl[\{\Wass{c}(\delta_{\theta}, \pi_{\gamma})\}^p\bigr] \lesssim \left(\int_{\rset^{d} \times \rset^{d}} \norm{\theta - \theta^{\prime}}^{p}\left(\norm{\theta - \thetas} + \norm{\theta^{\prime} - \thetas} + \frac{\tau_2 \sqrt{\gamma}}{\sqrt{\mu}}\right)^{p} \pi_{\gamma}(\rmd \theta) \pi_{\gamma}(\rmd \theta^{\prime})\right)^{1/p} \\
&\qquad \leq \bigl(\int \norm{\theta - \theta^{\prime}}^{2p} \pi_{\gamma}(\rmd \theta) \pi_{\gamma}(\rmd \theta^{\prime})\bigr)^{1/2p} \bigl(\int \left(\norm{\theta - \thetas} + \norm{\theta^{\prime} - \thetas} +  \frac{\tau_2 \sqrt{\gamma}}{\sqrt{\mu}}\right)^{2p} \pi_{\gamma}(\rmd \theta) \pi_{\gamma}(\rmd \theta^{\prime}) \bigr)^{1/2p} \\
&\qquad \lesssim \frac{\Constlast{2p} \tau^2_{2p} \gamma}{\mu} \eqsp.
\end{align*}
Combining now the bounds above in \eqref{eq:burkholder_applied}, we get that 
\begin{equation}
\PE_{\pi_{\gamma}}^{1/p}\bigl[\norm{\sum_{k=1}^{q} \left\{ g_m(\theta_{km}) - \MKQ_{\gamma}^{m}g_m(\theta_{(k-1)m}) \right\}}^p \bigr] \lesssim \frac{\Constlast{2p} p \L_{3} \tau^2_{2p} \gamma \sqrt{q}}{\mu}\eqsp,
\end{equation}
and, hence, substituting into \eqref{eq:whole_part}, we get
\begin{equation}
\PE_{\pi_{\gamma}}^{1/p}\bigl[\norm{\sum_{k=0}^{n-1}\barpsi(\theta_k)}^p\bigr] \lesssim \frac{\Constlast{2p} p \L_{3} \tau^2_{2p} \gamma \sqrt{q} m}{\mu} + \frac{\L \Constlast{2p}\tau_{2p}^2 \gamma m}{2 \mu}\eqsp.
\end{equation}
Now the statement follows from the definition of $m = m(\gamma)$ in \eqref{eq:m_gamma_def} and $q = \lfloor n/m \rfloor \leq n/m$.

\subsection{Version of \Cref{prop:psi_p_moment_bound} for arbitrary initial distribution $\nu$.}

In order to prove \Cref{th:RR_pth_moment_proof_explicit}, we need a generalization of \Cref{prop:psi_p_moment_bound} for arbitrary initial distribution $\nu$. The following result holds:
\begin{lemma}
\label{lem:Generalization_Rosenthal}
Let $\{\tilde{\theta}^{(\gamma)}_{k}\}_{k \in \nset}$ and $\{\theta^{(\gamma)}_{k}\}_{k \in \nset}$ be defined by the synchronous coupling construction \eqref{eq:kernel_coupling_construction}, where $\tilde{\theta}^{(\gamma)}_{0} \sim \pi_{\gamma}$ and $\theta^{(\gamma)}_{0} \sim \nu$. Then, under assumptions of \Cref{prop:psi_p_moment_bound}, for any $\gamma \in (0, 1/(L\Conststep{6})], n \in \nset$ and initial distribution $\nu$, it holds that 
\begin{multline*}
\PE^{1/p}_{\nu}\bigl[\norm{\sum_{k=n+1}^{2n}\{\psi(\theta_{k}^{(\gamma)}) - \pi_{\gamma}(\psi)\}}^p\bigr] \leq \PE^{1/p}_{\pi_\gamma}\bigl[\norm{\sum_{k=n+1}^{2n}\{\psi(\tilde{\theta}^{(\gamma)}_{k}) - \pi_{\gamma}(\psi)\}}^p\bigr] \\+ \frac{c_0 \L_3(1-\gamma\mu)^{(n+1)/2}}{\gamma\mu}\left(\PE^{1/p}_{\nu}[\norm{\theta^{(\gamma)}_0-\thetas}^{2p}] + \frac{\Constlast{2p} \gamma\tau_{2p}^2}{\mu}\right)\eqsp,
\end{multline*}
where $c_0$ is an absolute constant.
\end{lemma}
\begin{proof}
We consider the synchronous coupling contraction defined in \eqref{eq:kernel_coupling_construction} and denote by $\MKK_{\gamma}$ the corresponding coupling kernel. Hence, we have 
\begin{align*}
&\PE^{1/p}_{\nu}\bigl[\norm{\sum_{k=n+1}^{2n}\{\psi(\theta_{k}) - \pi_{\gamma}(\psi)\}}^p\bigr] = \bigl(\PE^{\MKK_\gamma}_{\nu, \pi_\gamma}\bigl[\norm{\sum_{k=n+1}^{2n}\{\psi(\theta_{k}) - \pi_{\gamma}(\psi)\}}^p\bigr]\bigr)^{1/p} \\
&\qquad \qquad \leq  \PE^{1/p}_{\pi_\gamma}\bigl[\norm{\sum_{k=n+1}^{2n}\{\psi(\tilde{\theta}_{k}) - \pi_{\gamma}(\psi)\}}^p\bigr] + \bigl(\PE^{\MKK_\gamma}_{\nu, \pi_\gamma}\bigl[\norm{\sum_{k=n+1}^{2n}\{\psi(\theta_{k}) - \psi(\tilde{\theta}_{k})\}}^p\bigr]\bigr)^{1/p} \eqsp.
\end{align*}
It remains to bound the last term in the inequality above. Applying Minkowski's inequality together with \Cref{lem: Lipschitzness_remainder}, we get 
\begin{equation*}
\bigl(\PE^{\MKK_\gamma}_{\nu, \pi_\gamma}\bigl[\norm{\sum_{k=n+1}^{2n}\{\psi(\theta_{k}) - \psi(\tilde{\theta}_{k})\}}^p\bigr]\bigr)^{1/p}  \leq \frac{\L_3}{2}\sum_{k=n+1}^{2n}\bigl(\PE^{\MKK_\gamma}_{\nu, \pi_\gamma}[c^p(\theta_k, \tilde{\theta}_k)]\bigr)^{1/p}\eqsp.
\end{equation*}
Using H\"older's and Minkowski's inequalities together with \Cref{ass:stationary_moments_bounds}($2p$) and \Cref{lem: coupling_4_moment}, we obtain that 
\begin{align*}
&\bigl(\PE^{\MKK_{\gamma}}_{\nu, \pi_{\gamma}}[c^p(\theta_k, \tilde{\theta}_k)]\bigr)^{1/p} \\
& \qquad \leq (\PE^{\MKK_{\gamma}}_{\nu, \pi_{\gamma}}[\norm{\theta_k- \tilde{\theta}_k}^{2p}])^{1/(2p)}\bigl(\PE_{\pi_{\gamma}}^{1/(2p)}[\norm{\tilde{\theta}_k -\thetas}^{2p}] + \PE_{\nu}^{1/(2p)}[\norm{\theta_k -\thetas}^{2p} + \frac{2^{3/2}\gamma^{1/2}\tau_2}{\mu^{1/2}}]\bigr) \\ 
& \qquad \leq (1-\gamma\mu)^{k/2}(\PE^{\MKK_{\gamma}}_{\nu, \pi_{\gamma}}[\norm{\theta_0- \tilde{\theta}_0}^{2p}])^{1/(2p)}(\PE_{\nu}^{1/(2p)}[\norm{\theta_0-\thetas}^{2p}] + \frac{2\Constlast{2p}^{1/2}\gamma^{1/2}\tau_2p}{\mu^{1/2}}+ \frac{2^{3/2}\gamma^{1/2}\tau_2}{\mu^{1/2}}) \\
&\qquad \lesssim (1-\gamma\mu)^{k/2}\left(\frac{\Constlast{2p} \gamma\tau_{2p}^2}{\mu} + \PE_{\nu}^{1/p}\norm{\theta_0-\thetas}^{2p}\right)
\end{align*}

Combining all inequalities above, we get
\begin{equation*}
\biggl(\PE^{\MKK_{\gamma}}_{\nu, \pi_{\gamma}}[\norm{\sum_{k=n+1}^{2n}\{\psi(\theta_k) - \psi(\tilde{\theta}_k)\}}^p]\biggr)^{1/p} \lesssim \frac{\L_3(1-\gamma\mu)^{(n+1)/2}}{\gamma\mu}\biggl(\PE^{1/p}_{\nu}[\norm{\theta_0-\thetas}^{2p}] + \frac{\Constlast{2p} \gamma\tau_{2p}^2}{\mu}\biggr)\eqsp,
\end{equation*}
and the statement follows.
\end{proof}

\section{Experimental details}
\label{sec:exp_details}
We recall the error representation \eqref{eq:PR-decomposition-error-extended}, and obtain with simple algebra:
\begin{multline}
\label{eq:PR-decomposition-error-rhs}
\H (\prtheta_{n}^{(\gamma)} - \thetas) + n^{-1}\sum_{k=n+1}^{2n}\noise{k+1}(\thetas) = \frac{\theta_{n+1}^{(\gamma)} - \thetas}{\gamma n} - \frac{\theta_{2n+1}^{(\gamma)} - \thetas}{\gamma n}  \\
- \frac{1}{n}\sum_{k=n+1}^{2n}\{\noise{k+1}(\theta_{k}^{(\gamma)}) - \noise{k+1}(\thetas)\} - \frac{1}{n}\sum_{k=n+1}^{2n}\psi(\theta_{k}^{(\gamma)}) - \frac{1}{n}\sum_{k=n+1}^{2n}G (\theta_{k}^{(\gamma)})\eqsp.
\end{multline}
Under \Cref{ass:rand_noise}($6$), the statistics $\frac{1}{n}\sum_{k=n+1}^{2n}\noise{k+1}(\thetas)$ is a sum of independent random variables, and 
\[
n^{-2}\PE[\norm{\sum_{k=n+1}^{2n}\noise{k+1}(\thetas)}^2] = \frac{\trace{\noisecov}}{n}\,.
\]
Hence, in order to trace the rate of the second-order terms in \eqref{eq:2-moment-bound-optimized-rr}, it is enough to find  the decay rate of the right-hand side in \eqref{eq:PR-decomposition-error-rhs}. We select different sample sizes $n = 250 \times 2^{k}$, where $k = 0,\ldots,14$, and run the SGD procedure \eqref{eq:sgd_recursion_main} based on the constant step sizes $\gamma$ and $2\gamma$, selecting $ \gamma = 1/\sqrt{n}$. Then we construct the associated estimates $\bar{\theta}_{n}^{(\gamma)}$ and $\bar{\theta}_{n}^{(2\gamma)}$. We conduct $M = 320$ independent parallel runs to approximate the expectations. Code to reproduce experiments is provided at \href{https://github.com/svsamsonov/richardson_romberg_example}{https://github.com/svsamsonov/richardson\_romberg\_example}.

\end{document}